\title{Path categories and propositional identity types}
\date{\today}
\author{Benno van den Berg}
\begin{document}

\maketitle

\begin{abstract}
Connections between homotopy theory and type theory have recently attracted a lot of attention, with Voevodsky's univalent foundations \cite{voevodsky11} and the interpretation of Martin-L\"of's identity types in Quillen model categories \cite{awodeywarren09} as some of the highlights. In this paper we establish a connection between a natural weakening of Martin-L\"of's rules for the identity types which has been considered by Cohen, Coquand, Huber and M\"ortberg in their work on a constructive interpretation of the univalence axiom \cite{cohenetal15} on the one hand, and the notion of a path category, a slight variation on the classic notion of a category of fibrant objects due to Brown \cite{brown73}, on the other. This involves showing that the syntactic category associated to a type theory with weak identity types carries the structure of a path category, strengthening earlier results by Avigad, Lumsdaine and Kapulkin \cite{avigadetal15}. In this way we not only relate a well-known concept in homotopy theory with a natural concept in logic, but also provide a framework for further developments.
\end{abstract}

\section{Introduction}

Martin-L\"of's rules for the identity types have led to several correspondences between notions from type theory and logic on the one hand and notions from homotopy theory and category theory on the other. The aim of this paper is to establish another correspondence between a well-known categorical concept in homotopy theory and a natural weakening of Martin-L\"of's rules for the identity type.

At first blush, there is no reason to expect such connections; indeed, the ideas that guided Martin-L\"of in setting up the rules for the identity types were more philosophical in character and, if anything, point in the opposite direction. His idea was that for any type $A$ and any pair of elements $a, b \in A$ there should be, besides the judgement $a = b \in A$, expressing that $a$ and $b$ are \emph{definitionally equal}, a type ${\rm Id}_A(a, b)$ whose elements are proofs of the equality of $a$ and $b$. This leads to a second, and weaker, notion of equality, defined by saying that $a$ and $b$ are \emph{propositionally equal} if there is a term $p \in {\rm Id}_A(a, b)$. The rules for the identity types have the form of an inductive definition, with elements of identity types generated inductively from reflexivity terms ${\bf r}(a) \in {\rm Id}_A(a,a)$,  witnessing the equality of $a$ with itself. Given this starting point, it was natural to expect that all elements in an identity type should be provably equal to a reflexivity term, and an identity type ${\rm Id}_A(a, b)$ could only be inhabited if $a = b \in A$ is derivable as well.

However, these ideas were refuted in a seminal paper by Hofmann and Streicher \cite{hofmannstreicher98}. In this paper, Hofmann and Streicher make two technical contributions: first, they show that the identity types not only determine an equivalence relation on every type, but give it the structure of a groupoid as well. More precisely, because equality is provably an equivalence relation, there is for any $p \in {\rm Id}_A(a, b)$ an element ${\bf s}p \in {\rm Id}_A(b, a)$ and for any pair of elements $p \in {\rm Id}_A(a, b)$ and $q \in {\rm Id}_A(b, c)$ an element ${\bf t}(p, q) \in {\rm Id}_A(a, c)$; in addition, there are the  reflexivity terms ${\bf r}a \in {\rm Id}_A(a, a)$. This much could be expected from any proof-relevant treatment of equality; however, Hofmann and Streicher show that in type theory these operations ${\bf r}$, ${\bf s}$ and ${\bf t}$ give $A$ the structure of a groupoid, at least up to elements in the iterated identity types of the form ${\rm Id}_{{\rm Id}_A(a, b)}(p, q)$.

The second, and far more involved, contribution of Hofmann and Streicher is the construction of a model of type theory in which the types are interpreted as groupoids; the idea is that if a groupoid interprets some type $A$, then the objects in this groupoid interpret elements $a, b \in A$ and the discrete groupoid on the set of arrows between these objects interprets ${\rm Id}_A(a, b)$. Since there can be distinct parallel arrows in a groupoid, this model shows the impossibility of proving that any two elements of  ${\rm Id}_A(a, b)$ must be propositionally equal.

Given these contributions, the connection to homotopy theory and category theory starts to look compelling, if not inevitable. Indeed, the properties of the identity type uncovered by Hofmann and Streicher make sense if we understand types as spaces. On this analogy one reads elements of a type as points in a space, ${\rm Id}_A(a, b)$ as the space of paths between $a$ and $b$ and elements in iterated identity types as homotopies between paths. In a topological space paths can be composed and reversed, satisfying the laws of a groupoid up to homotopy, while the constant paths act as identities, just as in type theory.

But if this analogy is to be trusted, much more should be true. Indeed, a type with all its iterated identity types should have the structure of an $\infty$-groupoid: this was already conjectured by Hofmann and Streicher and subsequently shown to be true in \cite{vandenberggarner11,lumsdaine10}. Also, the category of all $\infty$-groupoids ought to yield a model of type theory too; Voevodsky showed that this is true as well \cite{voevodsky11,kapulkinetal12}. In this way we obtain a precise connection between type theory and homotopy theory, in that the axioms of an $\infty$-groupoid capture precisely the algebraic structure both of a type with its iterated identity types and a space and its iterated path spaces. The idea to read types in type theory as homotopy types of spaces has since led to a lot of new developments; we refer to \cite{UFP13}.

Here we start from the connection between the identity types and weak factorisation systems, another notion from homotopy theory. In abstract homotopy theory such weak factorisation systems abound; indeed, Quillen model structures, which are highly influential as abstract environments in which one can do homotopy theory, are categories equipped with two interlocking weak factorisation systems \cite{quillen67,hirschhorn03,hovey99}. The rough idea is that such weak factorisation systems provide a sound and complete semantics for the identity types, in that the classifying category associated to any type theory with identity types comes equipped with a weak factorisation system, while the rules for the identity types can be interpreted in any category with a weak factorisation system. The former was shown to be true by Gambino and Garner \cite{gambinogarner08}, but the latter is true only with certain qualifications. Indeed, weak factorisation systems only yield ``non-split'' models, the reason being that the structure of a weak factorisation system is not sufficiently rigid to provide interpretations of the identity types which work well with substitution (this is known as the ``coherence problem''). At present it is unclear whether there exists a general method for splitting such models in a way which gives us genuine models of the identity type, although we do possess methods which take care of most of the concrete cases of interest (for more on this, we refer to \cite{lumsdainewarren15}). So the correct statement would be that categories with weak factorisation systems are non-split models of the identity types and that to obtain a model of the identity types one needs something like a homotopy-theoretic model of the identity types as in  \cite{vandenberggarner12}.

In this paper we establish a similar kind of correspondence, where on the homotopy-theoretic side we have the notion of a \emph{path category}, short for a category with path objects. The notion of a path category is a slight strengthening of Brown's classic notion of a category of fibrant objects \cite{brown73} and was introduced in \cite{bergmoerdijk16}, where also many of its basic properties were established. Like Quillen's notion of a model category such categories provide abstract settings in which the basic concepts and results from homotopy theory can be interpreted. However, path categories differ from Quillen model categories in that they are based on two classes of maps, called weak equivalences and fibrations, and there is no third class of maps called cofibrations.

Another difference is that path categories carry no underlying weak factorisation system: what one does have in a path category is that in any commuting square with a weak equivalence on the left and a fibration on the right there is a diagonal filler which makes the resulting lower triangle commutative, whilst making the upper triangle commute up to (fibrewise) homotopy (this was shown in \cite{bergmoerdijk16}). For the interpretation of the identity types in path categories this means that one cannot soundly interpret the usual computation rule for the identity type; however, there is a natural weakening of this rule which can be interpreted. Indeed, the computation rule states a definitional equality between two terms; if one replaces this by a propositional equality, then this weaker rule can be interpreted. We refer to the identity type with this weakened computation rule as the \emph{propositional identity type}.

These propositional identity types have recently been considered by Cohen, Coquand, Huber and M\"ortberg in their work on a constructive interpretation of the univalence axiom \cite{cohenetal15}. In their work they build a model of type theory with Voevodsky's univalence axiom inside a constructive metatheory. However, they do not succeed in interpreting the usual rules for the identity types: for their ``path types'' the computation rule holds only in a propositional form. At present, it is unclear whether a constructive interpretation of a type theory with both the usual rules for the identity types and univalence can be found.

In this paper we establish a precise relation between path categories and propositional identity types. On the one hand, path categories allow for a non-split interpretation of the propositional identity types; on the other hand, the syntactic category associated to any type theory with propositional identity types carries the structure of a path category. The latter strengthens Theorem 3.2.5 and Lemma 3.2.14 in \cite{avigadetal15}, where it was shown that the syntactic category associated to full Martin-L\"of type theory with the usual rules for the identity types has this structure; the main result here is that we show that a basic type theory equipped only with propositional identity types suffices for this purpose.

The precise contents of this paper are as follows. In Section 2 we recall the main features of the syntax of type theory and establish our notational conventions. We borrow the notion of a tribe from Joyal which gives us a basic semantics for type theory. In Section 3 we discuss propositional identity types and establish some categorical properties of the syntactic category associated to any type theory equipped with such propositional identity types. In Section 4 we recall the notion of a path category from \cite{bergmoerdijk16} and discuss how these provide non-split models for propositional identity types. In Sections 5 and 6 we establish that the syntactic category associated to any type theory with propositional identity types is a path category; in Section 5 we prove this under the additional assumption that the type theory comes equipped with strong unit and sum types, leaving a proof of the general case to Section 6. The paper ends with an appendix containing technical results that are needed at various points in the paper.

The research reported here was done whilst the author was a visiting fellow at the Isaac Newton Institute for Mathematical Sciences in the programme ``Mathematical, Foundational and Computational Aspects of the Higher Infinite (HIF)"  funded by EPSRC grant EP/K032208/1. The author thanks the Institute for excellent working conditions, which should in no way be held responsible for the slow pace at which this paper was written.
\section{Syntax and semantics}

For the convenience of the reader we recall here some basic facts about the syntax of dependent type theory; we also establish some notational and terminological conventions that will be used throughout this paper. All this material is absolutely standard and for more comprehensive treatments the reader could consult \cite{martinlof84,nordstrometal90,hofmann97,jacobs99,pitts00}.

\subsection{General remarks about syntax} Type theory is a formal system for deriving statements of the form
\[ \mathcal{J} \, [ \, \Gamma \, ], \]
where $\mathcal{J}$ is a \emph{judgement} and $\Gamma$ is a \emph{context}. Judgements can have one of the following four forms:
\[ A \in \mbox{Type} \qquad a \in A \qquad A = B \in \mbox{Type} \qquad a = b \in A. \]
The meaning of first statement is that $A$ is a well-formed expression denoting a type, the second means that $a$ is a well-formed expression denoting an object of type $A$, while the third statement means that $A$ and $B$ are definitionally equal type expressions, and the fourth means that $a$ and $b$ are definitionally equal expressions for objects of type $A$.

Any judgement is always made in a \emph{context}. The purpose of the context is to make explicit the types of all the free variables in the judgement. Indeed, a context is of the form
\[ \Gamma = \, [ \, x_0 \in A_0, x_1 \in A_1(x_0), \ldots, x_n \in A_n(x_0,\ldots,x_{n-1}) \, ], \]
where $x_0,\ldots,x_{n-1}$ are distinct variables, and the only variables which may occur freely in $A_i$ are $x_0,\ldots,x_{i-1}$, as indicated. The system will be built in such a way that for a $\Gamma$ as above, the statement $\mathcal{J} \, [\Gamma]$ is derivable only if
\begin{displaymath}
\begin{array}{l}
 A_0 \in {\rm Type} \, [] \\ A_1(x_0) \in {\rm Type} \, [x_0 \in A_0] \\ \ldots \\ A_n(x_0,\ldots,x_{n-1}) \in {\rm Type} \, [ x_0 \in A_0, \ldots, x_{n-1} \in A_{n-1}(x_0,\ldots, x_{n-2}) ]
\end{array}
\end{displaymath}
are derivable as well.

The rules in type theory are of the form
\begin{prooftree}
\AxiomC{$\mathcal{J}_1 \, [ \, \Gamma_1 \, ]$}
\AxiomC{$\mathcal{J}_2 \, [ \, \Gamma_2 \, ]$}
\AxiomC{$\ldots$}
\AxiomC{$\mathcal{J}_n \, [ \, \Gamma_n \, ]$}
\QuaternaryInfC{ $\mathcal{J} \, [ \, \Gamma \, ]$}
\end{prooftree}
meaning that once each of the $\mathcal{J}_i \, [ \, \Gamma_i \, ]$ has been derived, one may apply the rule to derive $\mathcal{J} \, [ \, \Gamma \, ]$ as well. In case $n = 0$ the rule is an axiom and says that $\mathcal{J} \, [ \, \Gamma \, ]$ is derivable without any assumptions. All the rules that we will see will have the form
\begin{prooftree}
\AxiomC{$\mathcal{J}_1 \, [ \, \Delta, \Gamma_1 \, ]$}
\AxiomC{$\mathcal{J}_2 \, [ \, \Delta, \Gamma_2 \, ]$}
\AxiomC{$\ldots$}
\AxiomC{$\mathcal{J}_n \, [ \, \Delta, \Gamma_n \, ]$}
\QuaternaryInfC{ $\mathcal{J} \, [ \, \Delta, \Gamma \, ]$}
\end{prooftree}
where there is some context $\Delta$ shared by both the assumptions and the conclusion. Most of the time such shared contexts $\Delta$ are left implicit when writing down rules. For example, one of the axioms of type theory will be written as
\begin{prooftree}
\AxiomC{$A \in {\rm Type}$}
\UnaryInfC{ $A = A$}
\end{prooftree}
but this should really be understood as
\begin{prooftree}
\AxiomC{$A \in {\rm Type} \, [ \, \Gamma \, ]$}
\UnaryInfC{ $A = A \, [ \, \Gamma \, ]$}
\end{prooftree}
for arbitrary contexts $\Gamma$. Also the weakening rule
\begin{prooftree}
\AxiomC{$\mathcal{J} \, [ \, \Gamma \, ]$}
\AxiomC{$A \in \mbox{Type} $}
\BinaryInfC{$\mathcal{J} \, [ \, x \in A, \Gamma \, ]$}
\end{prooftree}
below should be understood as
\begin{prooftree}
\AxiomC{$\mathcal{J} \, [ \, \Delta, \Gamma \, ]$}
\AxiomC{$A \in \mbox{Type} \, [ \, \Delta \, ]$}
\BinaryInfC{$\mathcal{J} \, [ \, \Delta, x \in A, \Gamma \, ]$}
\end{prooftree}
This is the only rule with a side-condition: here $x$ should not occur in $\Gamma$ or $\Delta$.

With this in mind, the basic rules of type theory are the following:
\begin{itemize}
\item[--] Axiom, weakening and substitution:
\begin{center}
\begin{tabular}{c} \\
\AxiomC{$$}
\UnaryInfC{ $x \in A \, [ \, x: A \, ]$}
\DisplayProof \\ \\
\end{tabular}
\begin{tabular}{cc}
\AxiomC{$\mathcal{J} \, [ \, \Gamma \, ]$}
\AxiomC{$A \in \mbox{Type} $}
\BinaryInfC{$\mathcal{J} \, [ \, x \in A, \Gamma \, ]$}
\DisplayProof &
\AxiomC{$\mathcal{J} \, [ \, x \in A, \Gamma \, ]$}
\AxiomC{$a \in A $}
\BinaryInfC{$\mathcal{J}[a/x] \, [ \, \Gamma[a/x] \, ]$}
\DisplayProof \\ \\
\end{tabular}
\end{center}
\item[--] Reflexivity, symmetry, and transitivity of definitional equality of types
\begin{center}
\begin{tabular}{ccc} \\
\AxiomC{$A \in \mbox{Type}$}
\UnaryInfC{ $A = A$ }
\DisplayProof &
\AxiomC{$A = B $}
\UnaryInfC{ $B = A $ }
\DisplayProof &
\AxiomC{$A = B$}
\AxiomC{$B = C'$}
\BinaryInfC{ $A = C$ }
\DisplayProof \\ \\
\end{tabular}
\end{center}
\item[--] Reflexivity, symmetry, and transitivity of definitional equality of terms
\begin{center}
\begin{tabular}{ccc} \\
\AxiomC{$a \in A$}
\UnaryInfC{ $a = a \in A$ }
\DisplayProof &
\AxiomC{$a =b \in A$}
\UnaryInfC{ $b = a \in A$ }
\DisplayProof &
\AxiomC{$a =b \in A$}
\AxiomC{$b = c \in A$}
\BinaryInfC{ $a = c \in A$ }
\DisplayProof \\ \\
\end{tabular}
\end{center}
\item[--] Compatibility rules for definitional equality
\begin{center}
\begin{tabular}{cc} \\
\AxiomC{$a \in A$}
\AxiomC{$A = B$}
\BinaryInfC{ $a \in B$ }
\DisplayProof &
\AxiomC{$a = b \in A$}
\AxiomC{$A = B$}
\BinaryInfC{ $a = b \in B$ }
\DisplayProof \\ \\
\end{tabular}
\end{center}
\end{itemize}

\subsection{Classifying category} To any dependent type theory with the rules above we can associate a category, which we will call the \emph{classifying} or \emph{syntactic category}. The objects of this category are equivalence classes of contexts
\[ \Gamma = \, [ \, x_0 \in A_0, x_1 \in A_1(x_0), \ldots, x_n \in A_n(x_0,\ldots,x_{n-1}) \, ], \]
of the same length, where we identify $\Gamma$ with a context
\[ \Gamma' = \, [ \, y_0 \in B_0, y_1 \in B_1(y_0), \ldots, y_n \in B_n(y_0,\ldots,y_{n-1}) \, ] \]
if the following statements are derivable in the type theory:
\begin{displaymath}
\begin{array}{l}
 A_0 = B_0 \in {\rm Type} \, [] \\ A_1(x_0) = B_1(x_0) \in {\rm Type} \, [x_0 \in A_0] \\ \ldots \\ A_n(x_0,\ldots,x_{n-1})= B_n(x_0, \ldots, x_{n-1}) \in {\rm Type} \, [ x_0 \in A_0, \ldots, x_{n-1} \in A_{n-1}(x_0,\ldots, x_{n-2}) ]
\end{array}
\end{displaymath}
A morphism $f: \Delta \to \Gamma$, where $\Gamma$ is as above, is an equivalence class of terms $(t_0, \ldots, t_{n})$ for which the following statements are derivable:
\begin{displaymath}
\begin{array}{l}
t_0 \in A_0 \in {\rm Type} \, [\, \Delta \,] \\ t_1 \in A_1(t_0) \in {\rm Type} \, [ \, \Delta ] \\ \ldots \\ t_n \in A_n(t_0,\ldots,t_{n-1}) \, [ \, \Delta \, ]
\end{array}
\end{displaymath}
and where we identify $(t_0, \ldots, t_n)$ with $(s_0,\ldots,s_n)$ if the following statements are derivable:
\begin{displaymath}
\begin{array}{l}
s_0 = t_0 \in A_0 \in {\rm Type} \, [\, \Delta \,] \\s _ 1 = t_1 \in A_1(t_0) \in {\rm Type} \, [ \, \Delta ] \\ \ldots \\ s_n = t_n \in A_n(t_0,\ldots,t_{n-1}) \, [ \, \Delta \, ]
\end{array}
\end{displaymath}
The verification that this defines a category with composition given by making suitable substitutions can be found in the sources mentioned at the beginning of this section.

There are several classes of morphisms in this category which are of interest. First of all, there are the \emph{display maps} which are maps of the form $[\Gamma, x \in A] \to [\Gamma]$ dropping the last type from the context (more precisely, if $\Gamma$ is as above this is the equivalence class of the sequence $(x_0,\ldots,x_n)$). If we close these maps under identities and composition, we get the morphisms of the form $[\Gamma, \Delta] \to \Gamma$ dropping a final segment from a context: these maps are often called \emph{dependent projections}. If we also close under isomorphism, we obtain what we will call the \emph{fibrations}: that is, fibrations are morphisms which are isomorphic to dependent projecions.

In the present context, the fibrations are the most important class, and the most important property of these fibrations is that they are closed under pullbacks. Indeed, if $f = [t_0,\ldots,t_n]: \Delta \to \Gamma$ is as above and $[\Gamma, x \in A] \to [\Gamma]$ is a display map dropping the final type $A = A(x_0,\ldots,x_n)$, then
\diag{ [\Delta, y \in A(t_0,\ldots,t_n) ] \ar[d] \ar[r] & [\Gamma, x \in A] \ar[d] \\
\Delta \ar[r] & \Gamma }
is a pullback. So it follows from pullback pasting that if $f: \Delta \to \Gamma$ is an arbitrary map and $p: \Gamma' \to \Gamma$ is a fibration, then the pullback of $p$ along $f$ exists and is a fibration as well. Furthermore, it is easy to see that identity morphisms are fibrations, fibrations are closed under composition and the empty context $[]$ is a terminal object in the classifying category with the unique map $!: \Gamma \to []$ always being a fibration.

\subsection{Type theories with strong sums} In the sequel it will often be convenient to assume that our type theories have strong sums; indeed, we will first obtain our main results in the setting of type theories with strong sums and then we will eliminate this assumption. The main reason why the assumption of strong sums is so convenient is that in the classifying category of any type theory with strong sums every fibration is isomorphic to a display map. (The following discussion should be compared to Exercise 10.1.9 on page 593 of \cite{jacobs99}).

We will say that a type theory has \emph{strong sums} if it contains a type 1 with rules
\begin{center}
\begin{tabular}{cc} \\
\AxiomC{}
\UnaryInfC{ $1 \in \mbox{Type}$ }
\DisplayProof &
\AxiomC{}
\UnaryInfC{ $* \in 1$ }
\DisplayProof
\\ \\
\end{tabular}
\end{center}
and definitional equality \[ a = * \in 1, \]
as well as a type constructor $\Sigma$ with rules
\begin{center}
\begin{tabular}{cc} \\
\AxiomC{$A \in \mbox{Type}$}
\AxiomC{$B \in \mbox{Type} \, [x:A]$}
\BinaryInfC{ $\Sigma x \in A.B \in \mbox{Type}$ }
\DisplayProof &
\AxiomC{$a \in A$}
\AxiomC{$b \in B[a/x]$}
\BinaryInfC{ ${\bf p}ab \in \Sigma x \in A.B$ }
\DisplayProof \\ \\
\end{tabular}
\begin{tabular}{cc}
\AxiomC{$c \in \Sigma x \in A.B$}
\UnaryInfC{ ${\bf p}_0c \in A$ }
\DisplayProof &
\AxiomC{ $c \in \Sigma x \in A.B$ }
\UnaryInfC{ ${\bf p}_1c \in B[{\bf p}_0c/x]$ }
\DisplayProof \\ \\
\end{tabular}
\end{center}
and definitional equalities
\[ {\bf p}_0 ({\bf p}ab) = a \in A, \qquad {\bf p}_1({\bf p}ab) = b \in B[a/x], \qquad {\bf p}({\bf p}_0c)({\bf p}_1c) = c \in \Sigma x \in A.B.\]

\begin{rema}{butwhataboutcongruence} We will follow the usual type-theoretic convention in leaving the congruence rules for all the type and term constructors implicit. For the strong $\Sigma$-type this means that we also have the following rules:
\begin{center}
\begin{tabular}{cc} \\
\AxiomC{$A = A' \in \mbox{Type}$}
\AxiomC{$B = B' \in \mbox{Type} \, [x:A]$}
\BinaryInfC{ $\Sigma x \in A.B = \Sigma x \in A'. B' \in \mbox{Type}$ }
\DisplayProof &
\AxiomC{$a = a' \in A$}
\AxiomC{$b = b' \in B[a/x]$}
\BinaryInfC{ ${\bf p}ab = {\bf p}a'b' \in \Sigma x \in A.B$ }
\DisplayProof \\ \\
\end{tabular}
\begin{tabular}{cc}
\AxiomC{$c = c' \in \Sigma x \in A.B$}
\UnaryInfC{ ${\bf p}_0c = {\bf p}_0c' \in A$ }
\DisplayProof &
\AxiomC{ $c = c' \in \Sigma x \in A.B$ }
\UnaryInfC{ ${\bf p}_1c = {\bf p}_1 c' \in B[{\bf p}_0c/x]$ }
\DisplayProof \\ \\
\end{tabular}
\end{center}
We will assume that for all type and term constructors we have similar congruence rules.
\end{rema}

\begin{prop}{fibranddisplaymapsforttwithstrongsums}
In the classifying category of a type theory with strong sums every fibration is isomorphic to a display map.
\end{prop}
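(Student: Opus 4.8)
The plan is to prove the sharper statement that every \emph{dependent projection} $[\Gamma,\Delta]\to[\Gamma]$ is isomorphic, by an isomorphism lying over its base $[\Gamma]$, to a display map $[\Gamma,x\in C]\to[\Gamma]$ for a single type $C$ in context $\Gamma$. This suffices to conclude: by definition a fibration is isomorphic in the arrow category to a dependent projection, and since ``being isomorphic to'' is an equivalence relation on morphisms, a fibration is then also isomorphic to a display map.

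I would prove this sharper claim by induction on the length $k$ of the segment $\Delta=[y_1\in B_1,\ldots,y_k\in B_k]$. If $k=0$ the dependent projection is the identity on $[\Gamma]$, and here the unit type does the job: the display map $[\Gamma,z\in 1]\to[\Gamma]$ is an isomorphism whose inverse extends $\mathrm{id}_{[\Gamma]}$ by the term $*\in 1$, the two composites being identities, one by construction and the other because the definitional equality $a=*\in 1$ forces $z=*\in 1\,[\Gamma,z\in 1]$. If $k\geq 1$, write $\Delta=[y_1\in B_1,\Delta']$, where $\Delta'$ has length $k-1$ and is regarded as a segment over the context $[\Gamma,y_1\in B_1]$. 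Applying the induction hypothesis with $[\Gamma,y_1\in B_1]$ as base context yields a type $C'$ over $[\Gamma,y_1\in B_1]$ together with an isomorphism $[\Gamma,y_1\in B_1,\Delta']\cong[\Gamma,y_1\in B_1,z\in C']$ over $[\Gamma,y_1\in B_1]$, hence also over $[\Gamma]$. It then remains to collapse the two-step projection $[\Gamma,y_1\in B_1,z\in C']\to[\Gamma]$, and this is where the strong $\Sigma$-types enter: putting $C=\Sigma y_1\in B_1.\,C'$, the assignments $(x_0,\ldots,x_n,y_1,z)\mapsto(x_0,\ldots,x_n,\mathbf{p}y_1z)$ and $(x_0,\ldots,x_n,w)\mapsto(x_0,\ldots,x_n,\mathbf{p}_0w,\mathbf{p}_1w)$ define morphisms between $[\Gamma,y_1\in B_1,z\in C']$ and $[\Gamma,w\in C]$ that lie over $[\Gamma]$ and are mutually inverse, the required equations being instances of the definitional equalities $\mathbf{p}_0(\mathbf{p}ab)=a$, $\mathbf{p}_1(\mathbf{p}ab)=b$ and $\mathbf{p}(\mathbf{p}_0c)(\mathbf{p}_1c)=c$. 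Composing this with the isomorphism coming from the induction hypothesis identifies $[\Gamma,\Delta]\to[\Gamma]$ with the display map $[\Gamma,w\in C]\to[\Gamma]$ over $[\Gamma]$, which completes the induction.

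Most of this is routine bookkeeping: one must check that the displayed assignments are well-typed (using the introduction and elimination rules for $\Sigma$, and for $1$ in the base case), that they respect definitional equality and so descend to the relevant equivalence classes of terms (using the congruence rules adopted for all constructors), and that they are inverse to one another and commute with the projections down to $[\Gamma]$. The one point that genuinely needs care — and the reason for stating the claim in the strengthened ``over the base'' form — is that the isomorphisms have to be kept fibred over $[\Gamma]$ throughout the induction. It is precisely this that allows the induction hypothesis, applied in the enlarged context $[\Gamma,y_1\in B_1]$, to hand back a type $C'$ living in that very context, which can then legitimately be fed to the $\Sigma$-constructor; without it one would only know that $[\Gamma,y_1\in B_1,\Delta']$ is abstractly isomorphic to some $[\Theta,z\in C']$ and would first have to transport $C'$ back along $[\Theta]\cong[\Gamma,y_1\in B_1]$, an unnecessary detour.
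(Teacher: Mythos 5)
Your proof is correct and is essentially the argument the paper gives: both collapse the telescope $\Delta$ into a single type by induction on its length, using the unit type in the base case and a strong $\Sigma$-type (with the surjective-pairing equation $\mathbf{p}(\mathbf{p}_0c)(\mathbf{p}_1c)=c$ supplying the inverse) in the inductive step. The only cosmetic difference is that the paper peels off the \emph{last} type of $\Delta$, setting $\Sigma[\Delta,x\in A]=\Sigma y\in\Sigma\Delta.\,A(\mathbf{q}_0y,\ldots,\mathbf{q}_{n-1}y)$ and producing a left-nested sum, whereas you peel off the first type and apply the induction hypothesis in the extended context $[\Gamma,y_1\in B_1]$, producing a right-nested sum; both work equally well.
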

\begin{proof}
Let $[\Gamma, \Delta] \to \Gamma$ be a dependent projection in the classifying category of a type theory with strong sums. It is not hard to see that this map is isomorphic to $[\Gamma, x \in \Sigma \Delta] \to \Gamma$, where $\Sigma \Delta$ is the type in context $\Gamma$ defined by induction on the length of $\Delta$, as follows:
\begin{eqnarray*}
\Sigma [] & = & 1, \\
\Sigma [\Delta, x \in A] & = & \Sigma y \in \Sigma \Delta. A({\bf q}_0y, \ldots, {\bf q}_{n-1}y).
\end{eqnarray*}
with ${\bf q}_{i} = {\bf p}_1{\bf p}_0^{n - 1- i}$. It follows that every fibration is isomorphic to a display map.
\end{proof}

\subsection{Tribes} Abstracting away from the concrete details of the syntactic category we arrive at:
\begin{defi}{displaymapcategory} A \emph{tribe} (Joyal) is a category \ct{C} with a terminal object 1 in which we have selected a class of morphism called the \emph{fibrations}, satisfying the following axioms:
\begin{enumerate}
\item Isomorphisms are fibrations.
\item Fibrations are closed under composition.
\item For any object $X$ the unique arrow $X \to 1$ is always a fibration.
\item If $p: X' \to X$ is a fibration and $f: Y \to X$ is arbitrary, then there is a pullback square
\diag{ Y' \ar[d]_{q} \ar[r] & X' \ar[d]^p \\
Y \ar[r]_f & X }
in which $q$ is a fibration as well.
\end{enumerate}
\end{defi}

If \ct{C} is a tribe and $X$ is an arbitrary object in \ct{C}, then we can consider the full subcategory of $\ct{C}/X$ whose objects are fibrations with codomain $X$. This category, which we will denote by $\ct{C}(X)$, again has the structure of a tribe if we declare a map to be a fibration in $\ct{C}(X)$ precisely when its underlying morphism in \ct{C} is a fibration there. In fact, we have the following proposition.

\begin{prop}{slicingfordisplaymapcats}
If \ct{C} is a tribe and $X$ is an object in \ct{C}, then $\ct{C}(X)$ is again a tribe. Moreover, if $f: Y \to X$ is an arbitrary morphism in \ct{C}, then pulling back along $f$ determines a functor
\[ f^*: \ct{C}(X) \to \ct{C}(Y), \]
called \emph{change of base}, which preserves the tribal structure (that is, it preserves the terminal object, fibrations, as well as pullbacks of fibrations along arbitrary maps). If $f$ is a fibration, then this functor $f^*$ has a left adjoint $\Sigma_f$ given by postcomposition with $f$.
\end{prop}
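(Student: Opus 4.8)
The plan is to reduce everything to the universal property of pullbacks in \ct{C} together with the pullback–pasting lemma, while keeping careful track of the fact that $\ct{C}(X)$ is a \emph{full} subcategory of the slice $\ct{C}/X$.

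First I would check that $\ct{C}(X)$ is a tribe. Its terminal object is $\mathrm{id}_X \colon X \to X$: this is an isomorphism, hence a fibration by axiom (1), it lies in $\ct{C}(X)$, it is terminal in $\ct{C}/X$ and therefore in the full subcategory $\ct{C}(X)$, and the unique morphism from $(A,p)$ to it is $p$ itself, which is a fibration, so axiom (3) holds. Axioms (1) and (2) for $\ct{C}(X)$ are immediate: a morphism of $\ct{C}(X)$ is by definition a fibration exactly when its underlying arrow of \ct{C} is, isomorphisms of $\ct{C}(X)$ have underlying isomorphisms, and composition in $\ct{C}(X)$ is computed in \ct{C}. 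For axiom (4), given a fibration $g \colon (A,a) \to (B,b)$ in $\ct{C}(X)$ (so $g$ is a fibration in \ct{C}) and an arbitrary $h \colon (C,c) \to (B,b)$, I would form the pullback of $g$ along $h$ in \ct{C} using axiom (4) for \ct{C}; the vertex, equipped with the evident map to $X$, is the composite of the fibration onto $C$ with $c$, hence a fibration over $X$, so it lies in $\ct{C}(X)$, and a pullback square in $\ct{C}/X$ whose vertex lies in $\ct{C}(X)$ is a pullback in $\ct{C}(X)$.

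Next, I would fix once and for all a choice of pullbacks realising axiom (4) in \ct{C}. For arbitrary $f\colon Y \to X$ this defines $f^*(A,a) = (Y\times_X A \to Y)$ on objects, which is a fibration over $Y$, and on morphisms the induced map between these pullbacks; functoriality is the uniqueness part of the universal property. That $f^*$ preserves the terminal object is the canonical identification $Y\times_X X \cong Y$ (which one may take to be an identity by choosing pullbacks along isomorphisms suitably). That $f^*$ preserves fibrations and preserves pullbacks of fibrations along arbitrary maps both follow from pullback pasting: if $A \to B$ is a morphism over $X$ then $Y\times_X A \cong (Y\times_X B)\times_B A$, so $f^*$ of a fibration is a pullback of a fibration, hence a fibration; and given a pullback square $D = B\times_A C$ in $\ct{C}(X)$ with, say, $B \to A$ a fibration, repeated pasting gives $f^*B \cong f^*A\times_A B$ and $f^*C \cong f^*A\times_A C$, whence $f^*B\times_{f^*A} f^*C \cong f^*A\times_A(B\times_A C)= f^*A\times_A D \cong f^*D$, compatibly with the projections, so the image square is again a pullback (and all four images lie in $\ct{C}(Y)$).

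Finally, suppose $f$ is a fibration. Then postcomposition with $f$ sends a fibration $p\colon A \to Y$ to the fibration $fp\colon A \to X$ by axiom (2), so it is a well-defined functor $\Sigma_f\colon \ct{C}(Y)\to \ct{C}(X)$, and I would exhibit $\Sigma_f \dashv f^*$ by the bijection sending a map $h\colon A \to Y\times_X B$ over $Y$ to its composite $A \to Y\times_X B \to B$, with inverse pairing a map $A \to B$ over $X$ with the structure map $A \to Y$; naturality and the triangle identities are routine diagram chases, and the unit and counit automatically lie in the full subcategories. The one genuine subtlety — and the step I would be most careful about — is that in a tribe one does \emph{not} have all pullbacks, only pullbacks of fibrations; so at each appeal to pullback pasting one must check that the pullbacks invoked are pullbacks of fibrations (they are, since every object of $\ct{C}(X)$ is a fibration over $X$ and the relevant legs are fibrations), and one must resist simply quoting an adjunction $\Sigma_f\dashv f^*$ between full slice categories, which in general does not exist here.
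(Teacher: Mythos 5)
Your proof is correct. The paper states this proposition without proof, treating it as standard, so there is no argument of the author's to compare against; your write-up is exactly the standard verification one would supply, and the two points you flag as requiring care (that only pullbacks of fibrations are guaranteed to exist, so every appeal to pullback pasting must be along a fibration leg, and that $\Sigma_f \dashv f^*$ is an adjunction between the full subcategories $\ct{C}(Y)$ and $\ct{C}(X)$ rather than between the full slices) are precisely the right ones.
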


\subsection{Equivalence relations} In this paper equivalence relations play an important r\^ole. They can be defined in the general context of a tribe, as follows.

\begin{defi}{eqrelation} Let $x: X \to I$ be a fibration in a tribe \ct{C}. An \emph{equivalence relation on $x: X \to I$} is a fibration $p = (p_1,p_2): R \to X \times_I X$ for which there are:
\begin{enumerate}
\item a morphism $\rho: X \to R$ (witnessing reflexivity) such that $p\rho = \Delta_X$, and
\item a morphism $\sigma: R \to R$ (witnessing symmetry) such that $p\sigma = (p_2,p_1)$, and
\item a morphism $\tau: R \times_X R \to R$ (witnessing transitivity) such that \[ p \tau = (p_1\pi_1, p_2\pi_2), \] where $R \times_X R, \pi_1$ and $\pi_2$ refer to the pullback
\diag{ R \times_X R \ar[r]^(.6){\pi_2} \ar[d]_{\pi_1} & R \ar[d]^{p_1} \\
R \ar[r]_{p_2} & X. }
\end{enumerate}
\end{defi}

Suppose $x: X \to I$ is a fibration and $p: R \to X \times_I X$ is an equivalence relation. Then for any map $h: A \to I$, the set
\[ \{ f: A \to X \, : \, xf = h \} \]
carries an equivalence relation: indeed, we will say that two maps $f, g: A \to X$ from this set are \emph{$R$-equivalent} if there is a map $H: A \to R$ such that $pH = (f, g)$; in this case we will write $f \sim_R g$, or $H: f \sim_R g$ if we wish to make the witness $H$ explicit. It is easily checked that $R$-equivalence defines an equivalence relation on the set  $\{ f: A \to X \, : \, xf = h \}$. We will mainly be interested in the special case where $h: A \to I$ is a fibration as well, in which case this argument shows that each hom-set
\[ {\rm Hom}_{\ct{C}(I)}(A, X) \]
carries an equivalence relation.

\begin{defi}{similareqrel}
Two equivalence relations $p: R \to X \times_I X$ and $q: S \to X \times_I X$ on $x: X \to I$ will be called \emph{similar} if they induce the same equivalence relation on each hom-set ${\rm Hom}_{\ct{C}(I)}(A, X)$.
\end{defi}

\begin{lemm}{eqrelinducingthesameeqrel}
In a tribe \ct{C} two equivalence relations $p: R \to X \times_I X$ and $q: S \to X \times_I X$ are similar if and only if there are maps $H: R \to S$ and $K: S \to R$ such that $qH = p$ and $pK = q$.
\end{lemm}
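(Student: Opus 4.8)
The plan is to prove the two directions separately, with the "if" direction being essentially immediate and the "only if" direction requiring us to extract the comparison maps from a carefully chosen instance of the hypothesis. For the "if" direction, suppose we are given $H: R \to S$ and $K: S \to R$ with $qH = p$ and $pK = q$. Then for any object $A$ that is fibrant over $I$ and any parallel pair $f, g: A \to X$ in $\ct{C}(I)$, a witness $G: A \to R$ with $pG = (f,g)$ yields $HG: A \to S$ with $q(HG) = (qH)G = pG = (f,g)$, so $f \sim_R g$ implies $f \sim_S g$; symmetrically, $K$ gives the reverse implication. Hence $R$ and $S$ induce the same equivalence relation on every such hom-set, i.e. they are similar.

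For the "only if" direction, the key idea is to apply the assumed similarity to a cleverly chosen test object, namely $R$ itself. Here $R$ lives over $I$ via the composite fibration $X \times_I X \to X \to I$ (post-composing the fibration $p$ with one of the product projections and then with $x$), and this composite is a fibration by closure under composition, so $R$ is a legitimate object of $\ct{C}(I)$. Now consider the two maps $p_1 p, \, p_2 p : R \to X$ in $\ct{C}(I)$, where $p = (p_1, p_2): R \to X \times_I X$; these are exactly the two "legs" of the relation $p$. They are $R$-equivalent via the identity map $\mathrm{id}_R: R \to R$, since $p \cdot \mathrm{id}_R = p = (p_1 p, p_2 p)$. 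By similarity, $p_1 p$ and $p_2 p$ are also $S$-equivalent, which by definition means there is a map $H: R \to S$ with $qH = (p_1 p, p_2 p) = p$. Running the same argument with the roles of $R$ and $S$ exchanged — using $S$ as the test object over $I$ and the identity on $S$ as a witness of $S$-equivalence of its two legs $q_1 q, q_2 q$ — produces a map $K: S \to R$ with $pK = q$. This gives both required maps.

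The one point that needs a little care — and the closest thing to an obstacle — is checking that $R$ (and likewise $S$) really is an object of $\ct{C}(I)$, i.e. that the structural map $R \to I$ used is a fibration, so that the hom-sets ${\rm Hom}_{\ct{C}(I)}(R, X)$ and ${\rm Hom}_{\ct{C}(I)}(S, X)$ on which the similarity hypothesis speaks are the relevant ones, and that $p_1 p$, $p_2 p$ are indeed morphisms in $\ct{C}(I)$ over this structural map. This is a short diagram chase: $p: R \to X \times_I X$ is a fibration by hypothesis, $X \times_I X \to X$ (either projection) is a fibration as a pullback of the fibration $x$, and $x: X \to I$ is a fibration, so their composite $R \to I$ is a fibration, and by construction $p_1 p$ and $p_2 p$ are both sections-compatible with this composite in the sense required for membership of ${\rm Hom}_{\ct{C}(I)}(R, X)$. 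Once this bookkeeping is in place the proof is complete; no genuinely hard step remains.
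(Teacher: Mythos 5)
Your proof is correct and follows essentially the same route as the paper: the ``if'' direction by postcomposing witnesses with $H$ and $K$, and the ``only if'' direction by testing similarity on $R$ itself (resp.\ $S$), using the identity as the $R$-equivalence witness for the two legs of $p$. The only difference is cosmetic notation ($p_1p, p_2p$ for what the paper calls the components $p_1, p_2$ of $p$) plus the explicit check that $R \to I$ is a fibration, which the paper leaves implicit.
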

\begin{proof}
Suppose $p = (p_1,p_2): R \to X \times_I X$ and $q= (q_1, q_2): S \to X \times_I X$ induce the same equivalence relation on each hom-set ${\rm Hom}_{\ct{C}(I)}(A, X)$. Since $p_1$ and $p_2$ are $R$-equivalent arrows in ${\rm Hom}_{\ct{C}(I)}(R, X)$, this means that they must also be $S$-equivalent; hence there is an arrow $H: R \to S$ such that $qH = p$. Similarly, there is an arrow $K: S \to R$ such that $pK = q$. Conversely, postcomposing with $H$ yields a morphism witnessing that two arrows from ${\rm Hom}_{\ct{C}(I)}(A, X)$ are $S$-equivalent provided one starts with a morphism showing that they are $R$-equivalent, while postcomposing with $K$ yields the other direction.
\end{proof}

\section{Propositional identity types}

\subsection{The syntax} We now come to our main syntactic definition. We will say that a type theory has \emph{propositional identity types} if it comes equipped with a type former Id satisfying the rules in Table \ref{table1}.
On top of the rules detailed in this table, we have congruence rules for ${\rm Id}, {\bf r}, {\bf J}, {\bf H}$, which we do not spell out here (see \refrema{butwhataboutcongruence}). If Id is a type former satisfying these rules, we refer to Id as the \emph{propositional identity type} and if we have a term $p \in {\rm Id}_A(s, t)$ we will say that $s$ and $t$ are \emph{propositionally equal} as elements of type $A$. This notion of propositional equality is weaker than the notion of definitional equality we have seen before: indeed, if $s = t \in A$, then ${\bf r}(s) \in {\rm Id}_A(s, t)$ by the introduction and congruence rules.

This means that the rules for the propositional identity types differ from the usual ones in two respects:
\begin{enumerate}
\item The computation rule holds only \emph{propositionally}: that is, it states a propositional instead of a definitional equality.
\item We allow for an additional contextual parameter $\Delta$ in the elimination and computation rules. In the presence of $\Pi$-types, this is equivalent to the rule without this parameter, but,  as observed in \cite[p. 94]{gambinogarner08} and \cite[p. 587]{jacobs99}, in the absence of $\Pi$-types such an additional parameter is essential to establish the basic properties of identity.
\end{enumerate}

\begin{table} \caption{Rules for propositional identity types} \label{table1}
\begin{center}
\begin{tabular}{c}
Formation Rule \\ \\
\AxiomC{$a \in A$}
\AxiomC{$b \in A$}
\BinaryInfC{${\rm Id}_A(a, b) \in \mbox{Type}$}
\DisplayProof \\ \\
Introduction Rule \\ \\
\AxiomC{$a \in A$}
\UnaryInfC{${\bf r}(a) \in {\rm Id}_A(a,a) $}
\DisplayProof \\ \\
Elimination Rule \\ \\
\Axiom$C(x, y, u) \in \mbox{Type} \fCenter\ \, [ \, x \in A, y \in A, u \in {\rm Id}_A(x,y), \Delta(x,y,u) \, ]$ \noLine
\UnaryInf$p \in {\rm Id}_A(a, b) \fCenter\ $ \noLine
\UnaryInf$d(x) \in C(x, x, {\bf r}(x)) \fCenter\ \, [ \,  x \in A, \Delta(x,x,{\bf r}(x)) \, ] $
\UnaryInf${\bf J}(a, b, p,d) \in C(a, b, p) \fCenter\ \, [ \,\Delta(a, b,p) \, ]$
\DisplayProof \\ \\
Computation Rule \\ \\
\Axiom$C(x, y, u) \, \in \mbox{Type} \fCenter \, [ \,x \in A, y \in A, u \in  {\rm Id}_A(x,y), \Delta(x,y,u) \,]$ \noLine
\UnaryInf$a \in A \fCenter $ \noLine
\UnaryInf$d(x) \in C(x, x, {\bf r}(x)) \fCenter \,  [ \,  x \in A, \Delta(x,x,{\bf r}(x)) \, ] $
\UnaryInf${\bf H}(a,d) \in {\rm Id}_{C(a, a, r(a))}({\bf J}(a, a, {\bf r}(a)), d(a)) \fCenter \, [ \, \Delta(a, a, r(a)) \, ]$
\DisplayProof
\end{tabular}
\end{center}
\end{table}

\subsection{Tribes with propositional identity types} Suppose \ct{C} is the classifying category of a type theory with propositional identity types and strong sums. We know that \ct{C} is a tribe, but what more can we say because the type theory has propositional identity types? Using the fact that any fibration is isomorphic to a display map, the rules for propositional identity types gives us:
\begin{enumerate}
\item[(1)$'$] For any fibration $\alpha: A \to I$ there is a factorisation of the diagonal \[ \Delta_A: A \to A \times_I A \] as a map $r: A \to P_I A$ followed by a fibration $(s, t): P_I A \to A \times_I A$, where the first map $r: A \to P_I A$ has the following property: if $f: B \to P_I A$ and $g: C \to B$ are fibrations and $d: r^*B \to r^*C $ is a section of $r^* g$, then there is a section $J: B \to C$ of $g$ and a map $H: r^*B \to P_{r^* B}(r^*C)$ such that $sH = r^*J$ and $tH = d$.
\end{enumerate}
But more than this is true.

Recall that in the classifying category associated to a type theory pullbacks of display maps along arbitrary maps exist and can be computed by making appropriate substitutions. However, substitution is an operation on syntax which preserves syntactic equality: in particular, it preserves all the possible structure strictly. This means that in the classifying category all the structure (by which we mean $P_I, r, (s,t), J$ and $H$) will be preserved on the nose by the pullback functors.

It will not be important for us to demand that the maps $r, (s,t), J, H$ are preserved by change of base. Preservation of $P_I$ will be important, though, but for our purposes preservation up to isomorphism is sufficient, as in:
\begin{enumerate}
\item[(2)$'$] For any map $k: J \to I$ there is an isomorphism between $P_J(k^*A)$ and $k^*(P_IA)$ which is compatible with the isomorphism $k^*(A \times_I A) \cong k^*A \times_J k^*A$.
\end{enumerate}
In fact, an even weaker condition suffices. As we will see below, the morphism $P_I(A) \to A \times_I A$ will turn out to be an equivalence relation on $A \to I$, so in view of \reflemm{eqrelinducingthesameeqrel}, the following, weaker, requirement is arguably more natural:
\begin{enumerate}
\item[(2)] For any map $k: J \to I$ we have morphisms between $P_J(k^*A)$ and $k^*(P_IA)$ which commute over $k^*(A \times_I A) \cong k^*A \times_J k^*A$.
\end{enumerate}

In the presence of (2) the condition (1)$'$ is equivalent to the following requirement, which looks more categorical:
\begin{enumerate}
\item[(1)] For any fibration  $\alpha: A \to I$ there is a factorisation of the diagonal \[ \Delta_A: A \to A \times_I A \] as a map $r: A \to P_I A$ followed by a fibration $(s, t): P_I A \to A \times_I A$. The map $r: A \to P_I A$ has the property that if $g$ is any pullback of $r$ along a fibration and
\diag{ V \ar[d]_g \ar[r]^k & B \ar[d]^f \\
U \ar[r]_l & A }
is a commuting square with a fibration $f$ on the right, then there are maps $d: U \to B$ and $H: V \to P_A(B)$ such that $fd = l, sH = dg, tH = k$ hold.
\end{enumerate}

\begin{lemm}{1equivto1primeinpresenceof2}
Let \ct{C} be a tribe satisfying condition {\rm (2)}. Then the conditions {\rm (1)} and {\rm (1)$'$} are equivalent.
\end{lemm}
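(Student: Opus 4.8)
The plan is to prove the equivalence by unwinding the two formulations and observing that they say essentially the same thing, with condition (2) providing the dictionary that lets one pass between the path object $P_A(B)$ appearing in (1) and the object $r^*(P_I A)$-flavoured data implicit in (1)$'$. Note first that both conditions posit the \emph{same} factorisation $\Delta_A = (s,t) \circ r$ of the diagonal; the content lies entirely in the lifting/filling property of $r$, so the whole argument is about showing that property (1) of $r$ is equivalent to property (1)$'$ of $r$ under (2).

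The direction (1)$'$ $\Rightarrow$ (1) I would argue as follows. Given a commuting square as in (1) with $f : B \to A$ a fibration and $g : V \to U$ a pullback of $r$ along some fibration, first reduce to the situation of (1)$'$: if $g$ is the pullback of $r : A \to P_I A$ along a fibration $b : B' \to P_I A$ (so that $V = r^* B'$ and $U$, up to this pullback description, sits over $A$), then I form the composite $l : U \to A$ and pull the fibration $f : B \to A$ and the square back so that everything lives over $P_I A$ via $b$. Concretely, set $B'' := l^* B$ as a fibration over $U$, and use the map $k$ to produce a section $d^{\flat}$ of the pullback of this fibration along $g$; then (1)$'$, applied to the fibrations $B' \to P_I A$ and (a suitable fibration built from $B''$) gives a section $J$ and a map $H$ into the relevant path object, and finally condition (2) — the comparison maps between $P_A$ of a pullback and the pullback of $P$ — is used to transport $H$ into a map $V \to P_A(B)$ with the required equations $fd = l$, $sH = dg$, $tH = k$. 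The homotopy $H$ produced by (1)$'$ lands in $P_{r^* B}(r^* C)$, which by (2) maps to (the appropriate base change of) $P_A(B)$, and chasing the compatibility with $A \times_I A \cong \dots$ yields the stated triangle identities.

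For the converse (1) $\Rightarrow$ (1)$'$, I would specialise the square in (1): given fibrations $f : B \to P_I A$ and $g : C \to B$ and a section $d : r^* B \to r^* C$ of $r^* g$, I take $U := B$ viewed appropriately, $l$ the identity-type structure map, and $k$ built from $d$, so that $V := r^* B$ with its projection $g : r^* B \to$ (base) being precisely a pullback of $r$ along the fibration $f$. Feeding this into (1) returns $d : U \to B$ (here playing the role of the section $J : B \to C$ after composing with the other projection) and a homotopy $H$ landing in a path object; condition (2) again converts between $P_{r^* B}(r^* C)$ and the $P_A(B)$-style object so that the equations $sH = r^* J$, $tH = d$ come out. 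The bookkeeping of which object is pulled back along which map, and matching the two different names for ``the section'', is the fiddly part, but it is purely formal.

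The main obstacle I anticipate is not any deep idea but rather the coherence bookkeeping: condition (1)$'$ is phrased with base change along $r$ explicitly ($r^* B$, $r^* C$, $r^* g$, $P_{r^*B}(r^*C)$), whereas (1) is phrased with an abstract commuting square and the path object $P_A(B)$ taken in the tribe over $A$; reconciling these requires repeated use of the fact that change of base preserves the tribal structure (Proposition \ref{slicingfordisplaymapcats}) together with the comparison maps of (2), and one must check at each stage that the triangle identities $fd = l$, $sH = dg$, $tH = k$ are genuinely matched up — in particular that the comparison maps of (2) respect the $(s,t)$ pair, which is exactly the ``compatibility over $k^*(A\times_I A)\cong k^*A \times_J k^*A$'' clause. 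Once that dictionary is set up carefully, both implications are routine diagram chases.
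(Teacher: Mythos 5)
Your proposal is correct and follows essentially the same route as the paper: in one direction you pull the fibration back along $l$ to turn the square into a section-finding problem over $U$ so that (1)$'$ applies, in the other you specialise (1) to the square over $1_U$ built from the given section, and in both cases you invoke condition (2) to pass between $P_V(V\times_U W)$ and $V\times_U P_U(W)$ so that the $(s,t)$-equations match up. The paper's proof is exactly this diagram chase, carried out explicitly.
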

\begin{proof}
We show that the following two conditions are equivalent for a map $g: V \to U$:
\begin{enumerate}
\item[(a)] If $h: W \to U$ is a fibration and $\sigma$ is a section of $g^*h: V \times_U W \to V$, then there is a section $J: U \to W$ of $h$ and a map $H: V \to P_V(V \times_U W)$ such that $sH = g^*J$ and $tH = \sigma$.
\item[(b)] If
\diag{ V \ar[d]_g \ar[r]^k & B \ar[d]^f \\
U \ar[r]_l & A }
is a commuting square with a fibration $f$ on the right, then there are maps $d: U \to B$ and $K: V \to P_A(B)$ such that $fd = l, sK = dg$ and $tK = k$ hold.
\end{enumerate}

(a) $\Rightarrow$ (b): Assume we are given a commuting square with $g$ on the left and a fibration $f$  on the right, as in (b), and consider the following double pullback diagram:
\diag{ V \times_A B \ar[r] \ar[d]_{g^*h} \ar@/^1pc/[rr]^{\pi_2} & U \times_A B \ar[d]_h \ar[r]_{\pi_2} & B \ar[d]^f \\
V \ar[r]_g & U \ar[r]_l & A. }
The map $h$, as a pullback of $f$, is a fibration and the map $k$ determines a section $\sigma$ of $g^*h$ such that $\pi_2 \sigma = k$. So, by assumption, $h$ has a section $J$ and there is a map $H: V \to P_V(V \times_A B)$ such that $sH = g^* J$ and $tH = \sigma$. From the fact that $P_V(V \times_A B)$ and $V \times_A P_A(B)$ are similar, it follows that there is a map \[ L: P_V(V \times_A B) \to P_A(B) \] such that $(s, t)L = (\pi_2 s, \pi_2 t)$. So if we put $d := \pi_2 J$ and $K := LH$, then $fd = f\pi_2J = lhJ = l$, $sK = sLH = \pi_2 sH = \pi_2 g^* J  = \pi_2 J g = dg$ and $tK = tLH = \pi_2tH = \pi_2 \sigma = k.$

(b) $\Rightarrow$ (a): Suppose $h: W \to U$ is a fibration and $\sigma$ is a section of $g^*h: V \times_U W \to V$. Then $\sigma = (1, k)$ for some map $k: V \to W$ with $hk = g$. This means that we have a commuting square
\diag{ V \ar[d]_g \ar[r]^k & W \ar[d]^h \\
U \ar[r]_{1_U} & U, }
and hence, by assumption, there is a section $J: U \to W$ of $h$ and a map $K: V \to P_U(W)$ such that $sK = gJ$ and $tK = k$. Because $V \times_U P_U(W)$ and $P_V(V \times_U W)$ are similar,  there is a map
\[ L: V \times_U P_U(W) \to P_V(V \times_U W) \]
such that $(s, t)L = ((\pi_1,s\pi_2), (\pi_1,t\pi_2))$. So if we put $H = L(1,K)$, then
\[ sH = sL(1, K) = (\pi_1,s\pi_2)(1, K) = (1, sK) = (1, gJ) = g^*J \]
and
\[ tH = tL(1, K) = (\pi_1,t\pi_2)(1, K) = (1, tK) = (1, k) = \sigma, \]
as desired.
\end{proof}

Hence we make the following definition.
\begin{defi}{comprcatwithpropidtypes}
We will say that a tribe \ct{C} has \emph{propositional identity types} if it satisfies conditions (1) and (2) above.
\end{defi}

A tribe with propositional identity types is a ``non-split'' model of the propositional identity types, as discussed in the introduction. For future reference we record:

\begin{prop}{classifyingcatwithpropidtypes}
The classifying category of any type theory with strong sums and propositional identity types is a tribe with propositional identity types.
\end{prop}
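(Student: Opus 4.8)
The plan is to verify directly from the syntax the two conditions (1) and (2) that define a tribe with propositional identity types. By the discussion in Section~2.2 the classifying category $\ct{C}$ of a type theory with strong sums is already a tribe, and by \refprop{fibranddisplaymapsforttwithstrongsums} every fibration in $\ct{C}$ is isomorphic to a display map; so it suffices to produce the required structure for display maps and transport it along isomorphisms. For a display map $\alpha: [\Gamma, x \in A] \to [\Gamma]$, so that $I = [\Gamma]$, I would set
\[ P_I A := [\Gamma,\, x \in A,\, y \in A,\, u \in {\rm Id}_A(x, y)], \]
take $r: A \to P_I A$ to be the section given by $y := x$, $u := {\bf r}(x)$, and take $(s, t): P_I A \to A \times_I A$ to be the dependent projection dropping $u$. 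The formation and introduction rules then show that $(s,t)\,r = \Delta_A$, so this is the desired factorisation of the diagonal into a map followed by a fibration; the congruence rules ensure that all the operations involved descend to well-defined morphisms in $\ct{C}$.

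The core of the argument is to check that $r$ has the lifting property demanded in condition (1)$'$, and this is where the bookkeeping lives. Given fibrations $f: B \to P_I A$ and $g: C \to B$, one may assume (again using \refprop{fibranddisplaymapsforttwithstrongsums}) that $B = [\Gamma, x \in A, y \in A, u \in {\rm Id}_A(x,y), \Delta(x,y,u)]$ and that $C$ is $B$ extended by a type $C(x,y,u) \in {\rm Type}$, which may also depend on the variables of $\Delta$ as in Table~\ref{table1}. The pullback $r^*B$ is then $[\Gamma, x \in A, \Delta(x,x,{\bf r}(x))]$, the pullback $r^*C$ is $r^*B$ extended by $C(x,x,{\bf r}(x))$, and a section $d$ of $r^*g$ is exactly a term $d \in C(x,x,{\bf r}(x))$ in context $r^*B$ --- precisely the minor premise of the elimination rule with $\Delta$ as its contextual parameter. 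Instantiating the elimination rule with the generic data $a := x$, $b := y$, $p := u$ yields a term ${\bf J}(x,y,u,d) \in C(x,y,u)$ over $B$, that is, a section $J: B \to C$ of $g$; and the computation rule yields ${\bf H}(x,d) \in {\rm Id}_{C(x,x,{\bf r}(x))}({\bf J}(x,x,{\bf r}(x),d),\, d)$ over $r^*B$. Unwinding the definition of the path object $P_{r^*B}(r^*C)$ --- whose context is that of pairs of $r^*C$-elements together with an ${\rm Id}$-term between them --- this last term is exactly a map $H: r^*B \to P_{r^*B}(r^*C)$ satisfying $sH = r^*J$ and $tH = d$. When $f$ and $g$ are merely isomorphic to display maps one first replaces them by isomorphic display maps and conjugates $J$ and $H$ by the isomorphisms; this is routine.

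For condition (2), I would use the observation stressed in the discussion preceding the definition: in the classifying category pullback along a map $k: J \to I$ is computed by substitution, and substitution preserves the ${\rm Id}$-former strictly, ${\rm Id}_A(a,b)[k] \equiv {\rm Id}_{A[k]}(a[k], b[k])$. Hence $k^*(P_I A)$ and $P_J(k^*A)$ are canonically identified, compatibly with the isomorphism $k^*(A \times_I A) \cong k^*A \times_J k^*A$; this in fact gives the stronger condition (2)$'$, and in particular (2). Finally, since (2) holds, \reflemm{1equivto1primeinpresenceof2} turns the verified condition (1)$'$ into condition (1). Therefore $\ct{C}$ satisfies both (1) and (2) and is a tribe with propositional identity types.

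The only genuine difficulty I anticipate is organisational rather than mathematical: carefully matching the contextual parameter $\Delta$ and the motive $C$ of the syntactic elimination and computation rules with the fibrations $f$, $g$ and the pullbacks $r^*B$, $r^*C$ of the categorical formulation of (1)$'$, and checking that the instance ${\bf H}(x,d)$ of the \emph{propositional} computation rule really does land in the correct path object $P_{r^*B}(r^*C)$ with exactly the boundary asserted by (1)$'$.
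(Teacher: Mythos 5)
Your proposal is correct and follows essentially the same route as the paper, which records this proposition without a separate proof precisely because the preceding discussion in Section~3 already establishes it: the syntactic rules applied to display maps give condition (1)$'$ (using \refprop{fibranddisplaymapsforttwithstrongsums} to reduce fibrations to display maps), strictness of substitution gives (2)$'$ and hence (2), and \reflemm{1equivto1primeinpresenceof2} converts (1)$'$ into (1). Your explicit matching of the contextual parameter $\Delta$ and the motive $C$ with the fibrations $f$, $g$ and the pullbacks $r^*B$, $r^*C$ is exactly the bookkeeping the paper leaves implicit, and it is carried out correctly.
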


\subsection{Path structure} In the remainder of this section we will study tribes with propositional identity types. In particular, we will show that they are what we will call path tribes. But before we can define that notion, we first need the definition of path structure.

\begin{defi}{weakpathstr}
Let \ct{C} be a tribe. To equip \ct{C} with \emph{path structure} means specifying for each fibration $x: X \to I$ an equivalence relation \[ (s, t): P_I(X) \to X \times_I X, \]
in such a way that for any map $f: J \to I$ the equivalence relations $P_{J}(f^*X) \to f^*X \times_{J} f^*X$ and $f^*(P_I(X)) \to f^*(X \times_I X) \cong f^*X \times_{J} f^*X$ are similar.
\end{defi}

\begin{lemm}{propidtypeshenceweakpathstructure}
Let \ct{C} be a tribe with propositional identity types. Then \ct{C} has path structure.
\end{lemm}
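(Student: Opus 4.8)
The plan is to take, for each fibration $x \colon X \to I$, the factorisation $\Delta_X \colon X \xrightarrow{\,r\,} P_I(X) \xrightarrow{(s,t)} X \times_I X$ provided by condition (1), and to show (i) that $(s,t)$ carries the structure of an equivalence relation on $x$ in the sense of \refdefi{eqrelation}, and (ii) that for any $f \colon J \to I$ the two equivalence relations on $f^*X \to J$ obtained from $P_J(f^*X)$ and from $f^*(P_IX)$ are similar --- which is precisely what \refdefi{weakpathstr} demands. Point (ii) is cheap once (i) is known: the pullback of an equivalence relation along an arbitrary map is again one (a routine check, using that change of base preserves pullbacks of fibrations by \refprop{slicingfordisplaymapcats}), so $f^*(P_IX) \to f^*X \times_J f^*X$ is an equivalence relation; applying (i) to the fibration $f^*X \to J$ gives the same for $P_J(f^*X)$; and condition (2) supplies maps back and forth between $P_J(f^*X)$ and $f^*(P_IX)$ commuting over the canonical isomorphism $f^*(X \times_I X) \cong f^*X \times_J f^*X$, so by \reflemm{eqrelinducingthesameeqrel} the two are similar. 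The substance is therefore (i).

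For (i), reflexivity is immediate: put $\rho := r$, so that $(s,t)\rho = \Delta_X$ by construction. For symmetry and transitivity I would invoke condition (1); since (2) holds, \reflemm{1equivto1primeinpresenceof2} allows me to use it in the form (1)$'$, the convenient ``based-path-induction'' formulation. For symmetry, apply (1)$'$ with $A = X$, taking $B := P_IX$ via $f := \mathrm{id}$ (so $r^*B \cong X$) and $g \colon C \to B$ the pullback of $(s,t)$ along $(t,s) \colon P_IX \to X \times_I X$; a section of $g$ is exactly a map $\sigma \colon P_IX \to P_IX$ with $(s,t)\sigma = (t,s)$. Over the diagonal, $r^*g$ admits the section $a \mapsto (a, r(a))$ --- well defined because $(s,t)r = \Delta_X = (t,s)r$ --- and (1)$'$ then produces the desired $\sigma$; the auxiliary homotopy it also yields is not needed here.

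Transitivity is the main obstacle, and the one place where the extra contextual parameter $\Delta$ in the elimination rule --- equivalently, the freedom to choose an arbitrary fibration $B$ over $P_IA$ in (1)$'$ --- genuinely matters. I would apply (1)$'$ with $A = X$, taking $B := \{(w, w') \in P_IX \times P_IX : t(w') = s(w)\}$ fibred over $P_IX$ by the first projection (this encodes the parameter context $[\,a \in A,\ p \in \mathrm{Id}_A(a,x)\,]$), and $g \colon C \to B$ the pullback of $(s,t) \colon P_IX \to X \times_I X$ along the map $B \to X \times_I X$ sending $(w,w')$ to $(s(w'), t(w))$. A direct computation shows that $r^*B \cong P_IX$ and that $r^*g$ is isomorphic over $r^*B$ to the first projection $P_IX \times_{X \times_I X} P_IX \to P_IX$, which admits the diagonal $w' \mapsto (w', w')$ as a section --- valid precisely because $w'$ lies over $(s(w'), t(w'))$. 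Feeding this section into (1)$'$ returns a section $J$ of $g$, i.e.\ a map assigning to each composable pair $(w, w')$ a point of $P_IX$ lying over $(s(w'), t(w))$; swapping the two arguments then yields the map $\tau \colon R \times_X R \to R$ required by clause (3) of \refdefi{eqrelation}, with $(s,t)\tau(\rho_1, \rho_2) = (s(\rho_1), t(\rho_2))$ on $R \times_X R = \{(\rho_1, \rho_2) : t(\rho_1) = s(\rho_2)\}$. The only delicate point throughout is keeping the several nested pullbacks straight; conceptually this is just the Hofmann--Streicher argument that identity types carry groupoid structure, transported into the categorical setting.
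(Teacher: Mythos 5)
Your proof is correct, and in substance it is the same Hofmann--Streicher argument (reflexivity is $r$; symmetry and transitivity are obtained by eliminating along $r$), but the packaging differs from the paper's. The paper proves this lemma by a two-line reduction: it declares all fibrations to be display maps and all pullbacks of $r$ along fibrations to be weak equivalences, checks the axioms of the appendix, and invokes \reflemm{homiseqrel}, whose proof uses the square-lifting formulation (1) directly --- symmetry by lifting $r$ against $(s,t)$ over $(t,s)$, and transitivity by lifting the weak equivalence $(1,rt)\colon P_IX \to P_IX \times_X P_IX$ (a pullback of $r$ along a fibration, cf.\ \reflemm{recurringpullback}) against $(s,t)$ over $(s\pi_1, t\pi_2)$. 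You instead pass to the section formulation (1)$'$ via \reflemm{1equivto1primeinpresenceof2} and perform explicit ``based path induction with a parameter'' over carefully chosen fibrations on $P_IX$; this is legitimate (conditions (1) and (2) hold by hypothesis, so (1)$'$ is available, and your identifications of $r^*B$ and $r^*g$ check out in both the symmetry and transitivity steps), and it stays closer to the type-theoretic syntax at the cost of more bookkeeping with nested pullbacks. Two small points in your favour: you do not need the appendix at all, and you explicitly verify the change-of-base clause of \refdefi{weakpathstr} (that $f^*(P_IX)$ is again an equivalence relation and is similar to $P_J(f^*X)$ via condition (2) and \reflemm{eqrelinducingthesameeqrel}), a step the paper's proof leaves implicit.
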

\begin{proof}
This follows from \reflemm{homiseqrel} in the appendix. Indeed, if \ct{C} is a tribe with propositional identity types and we declare all fibrations to be display maps and all pullback of maps $r: X \to P_IX$ along fibrations to be weak equivalences, then all the axioms (1-5) from the appendix are satisfied. Thus, \reflemm{homiseqrel} applies and we can deduce that $(s, t): P_IX \to X \times_I X$ is always an equivalence relation, as anticipated.
\end{proof}

If \ct{C} is a tribe with path structure, each object $A$ comes equipped with an equivalence relation $(s, t): PA = P_1A \to A \times A$. This implies that each hom-set ${\rm Hom}(B, A)$ carries an equivalence relation; indeed, we will call two maps $f, g: B \to A$ \emph{homotopic} if there is a map $H: B \to PA$ (a \emph{homotopy}) such that $(f, g) = (s, t)H$. In this case we will write $f \simeq g$, or $H: f \simeq g$ if we wish to stress the homotopy $H$. From this definition and the stability property (2) for path structure, the following lemma follows immediately.

\begin{lemm}{homrelisstableeq} Let \ct{C} be a tribe with path structure.
\begin{enumerate}
\item The homotopy relation is stable under precomposition with any map.
\item For each object $X$ the tribe $\ct{C}(X)$ also has path structure, and for every morphism $f: Y \to X$ the change of base functor \[ f^*: \ct{C}(X) \to \ct{C}(Y) \] preserves the homotopy relation.
\end{enumerate}
\end{lemm}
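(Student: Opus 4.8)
The plan is to read off both claims from the fact that $H\colon f\simeq g$ unfolds to $R$-equivalence for the equivalence relation $(s,t)\colon PA\to A\times A$, together with the stability clause of \refdefi{weakpathstr} and \reflemm{eqrelinducingthesameeqrel}; no serious work is involved, so the proof will be short.

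For part (1) I would argue directly. If $H\colon B\to PA$ satisfies $(s,t)H=(f,g)$ and $k\colon C\to B$ is arbitrary, then $Hk\colon C\to PA$ satisfies $(s,t)(Hk)=(fk,gk)$, so $Hk$ witnesses $fk\simeq gk$. This is just the special case $I=1$ of the general observation, made right after \refdefi{eqrelation}, that $R$-equivalence is closed under precomposition, and it uses nothing beyond the tribe structure.

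For part (2) the key preliminary is the bookkeeping identification: for any fibration $I\to X$ the slice $(\ct{C}(X))(I)$ is literally $\ct{C}(I)$, since a fibration over $I$ in $\ct{C}(X)$ is the same thing as a fibration over $I$ in $\ct{C}$ (the composite to $X$ being automatically a fibration), and pullbacks in $\ct{C}(X)$ agree with those in $\ct{C}$. Granting this, I would equip $\ct{C}(X)$ with path structure by reusing, for each fibration $x\colon X'\to I$ with $I$ an object of $\ct{C}(X)$, the equivalence relation $(s,t)\colon P_I(X')\to X'\times_I X'$ that $\ct{C}$ already provides; the required stability of these data along maps $f\colon J\to I$ in $\ct{C}(X)$ is then exactly the stability holding in $\ct{C}$, because ``similar'' refers to the same hom-sets in both tribes under the identification above. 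Note also that, since $\mathrm{id}_X$ is terminal in $\ct{C}(X)$ (it is a fibration, hence lies in $\ct{C}(X)$, and is terminal already in $\ct{C}/X$), the path object of an object $a\colon A\to X$ of $\ct{C}(X)$ is $P_X A$, and ``$g\simeq h$ in $\ct{C}(X)$'' means $R$-equivalence for $(s,t)\colon P_X A\to A\times_X A$.

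To finish, I would check that $f^*\colon\ct{C}(X)\to\ct{C}(Y)$ preserves homotopy, using that $f^*$ preserves pullbacks and the whole tribal structure by \refprop{slicingfordisplaymapcats}. Given $H\colon B\to P_X A$ over $X$ with $(s,t)H=(g,h)$, one obtains $f^*H\colon f^*B\to f^*(P_X A)$ with $(s,t)(f^*H)=(f^*g,f^*h)$ under the canonical iso $f^*(A\times_X A)\cong f^*A\times_Y f^*A$. By the stability clause, $f^*(P_X A)$ and $P_Y(f^*A)$ are similar as equivalence relations on $f^*A\to Y$, so \reflemm{eqrelinducingthesameeqrel} supplies a comparison map $L\colon f^*(P_X A)\to P_Y(f^*A)$ commuting over $f^*A\times_Y f^*A$, and then $L\circ f^*H\colon f^*B\to P_Y(f^*A)$ witnesses $f^*g\simeq f^*h$. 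I do not anticipate any real obstacle: the only point that needs care is the identification $(\ct{C}(X))(I)=\ct{C}(I)$ and the observation that path objects inside a slice are formed relative to that slice's terminal object, so that the stability data of $\ct{C}$ transfers verbatim.
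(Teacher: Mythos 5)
Your proposal is correct and takes essentially the same route as the paper, which simply asserts that the lemma ``follows immediately from the definition and the stability property (2) for path structure'': part (1) is the precomposition $Hk$ you describe, and part (2) is exactly the transfer of the path structure to $\ct{C}(X)$ via the identification $(\ct{C}(X))(I)=\ct{C}(I)$ followed by the comparison map supplied by \reflemm{eqrelinducingthesameeqrel}. Your write-up just makes explicit the unfolding the paper leaves to the reader.
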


\begin{rema}{notcongruence} Note that we do not claim (yet) that the homotopy relation is a congruence; in particular, we do not claim that the homotopy relation is preserved by postcomposition. This is true in tribes with propositional identity types (we will prove this in \reflemm{standardcomprcatisfibredtribe} below), but it does not seem to be hold in general tribes with path structure.
\end{rema}

To state the definition of a path tribe we need the definition of a contractible map.

\begin{defi}{contractible}
Suppose that $\ct{C}$ is tribe equipped with path structure. A fibration $x: X \to I$ will be called \emph{contractible} if both $x$ and \[ (s, t): P_I(X) \to X \times_I X \] have sections. An object $A$ will be called contractible if $A \to 1$ is contractible.
\end{defi}

Again, the following is immediate from the definition and the stability property (2) for path structure.
\begin{lemm}{contractiblemaps} In a tribe with path structure, contractible fibrations are stable under pullback along arbitrary maps.
\end{lemm}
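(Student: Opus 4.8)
The plan is to extract the two sections required by \refdefi{contractible} directly from the hypotheses, using only two soft facts: that change of base preserves sections of objects, and the stability clause built into the definition of path structure (together with \reflemm{eqrelinducingthesameeqrel}). Indeed, as the remark preceding the statement indicates, there is nothing deep here; the only thing to be careful about is the bookkeeping.

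So let $x: X \to I$ be a contractible fibration and $f: J \to I$ an arbitrary map; write $x' = f^*x: f^*X \to J$. First I would produce a section of $x'$. Since $x$ has a section $\sigma$ with $x\sigma = 1_I$, the pair $(1_J, \sigma f)$ induces, via the universal property of the pullback square defining $f^*X$, a map $J \to f^*X$ which one checks is a section of $x'$; more abstractly, the functor $\ct{C}/I \to \ct{C}/J$ given by pulling back along $f$ sends an object with a section to an object with a section, and this is the instance applied to $x$. Next I would produce a section of $(s,t): P_J(f^*X) \to f^*X \times_J f^*X$. Regard $(s,t): P_I(X) \to X \times_I X$ as a morphism in $\ct{C}(I)$; change of base along $f$ then sends it to a fibration $f^*(P_I X) \to f^*(X \times_I X) \cong f^*X \times_J f^*X$, and since $x$ is contractible this morphism $(s,t)$ has a section, whence so does its image under change of base (same argument as before). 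Finally, the stability requirement in the definition of path structure says precisely that the equivalence relations $P_J(f^*X) \to f^*X \times_J f^*X$ and $f^*(P_I X) \to f^*X \times_J f^*X$ on $x'$ are similar, so by \reflemm{eqrelinducingthesameeqrel} there is a map $K: f^*(P_I X) \to P_J(f^*X)$ commuting over $f^*X \times_J f^*X$. Composing $K$ with the section of $f^*(P_I X) \to f^*X \times_J f^*X$ obtained above yields the desired section of $(s,t): P_J(f^*X) \to f^*X \times_J f^*X$. Hence $x'$ is contractible.

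I do not expect a genuine obstacle. The one point requiring a little care — and the reason the similarity clause, rather than an isomorphism, is invoked — is that path structure only guarantees $P_J(f^*X)$ and $f^*(P_I X)$ are \emph{similar} as equivalence relations on $x'$, not isomorphic over $f^*X \times_J f^*X$; so the section must be transported along the comparison map furnished by \reflemm{eqrelinducingthesameeqrel} rather than along an inverse that need not exist. Everything else is routine verification that change of base preserves the relevant equational data.
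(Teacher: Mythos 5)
Your proof is correct and is exactly the intended unpacking of what the paper declares ``immediate from the definition and the stability property (2) for path structure'': change of base preserves the two sections, and the section of $f^*(P_IX) \to f^*X \times_J f^*X$ is transported to $P_J(f^*X)$ along the comparison map supplied by similarity via \reflemm{eqrelinducingthesameeqrel}. Your closing remark correctly identifies the one non-trivial point, namely that similarity rather than isomorphism is what the definition provides, and that a one-directional comparison map suffices.
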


The following lemma gives an alternative characterisation of contractible maps, which will often prove useful.

\begin{lemm}{altcharcontractibility}
Let \ct{C} be a tribe with path structure. A fibration $x: X \to I$ is contractible if and only if there are maps $f: I \to X$ and $H: X \to P_I(X)$ such that $xf = 1$ and $(s, t)H = (1,fx)$.
\end{lemm}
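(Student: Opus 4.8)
The statement to prove is \reflemm{altcharcontractibility}: a fibration $x\colon X\to I$ in a tribe with path structure is contractible if and only if there are maps $f\colon I\to X$ and $H\colon X\to P_I(X)$ with $xf=1$ and $(s,t)H=(1,fx)$.

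The plan is to prove both implications by unwinding the definition of contractibility, using that $(s,t)\colon P_I(X)\to X\times_I X$ is an equivalence relation. For the ``only if'' direction, suppose $x$ is contractible, so we have a section $f\colon I\to X$ of $x$ and a section $j\colon X\times_I X\to P_I(X)$ of $(s,t)$. The idea is to build $H$ from the transitivity (and symmetry) structure of the equivalence relation: we have a path witnessing $1_X\sim_R fx$ as follows. First, $\rho\colon X\to P_I(X)$ is a path from $1_X$ to $1_X$. We need a path from $1_X$ to $fx$. Note $1_X$ and $fx$ are both maps $X\to X$ over $I$ (since $x(fx)=(xf)x=x$), so it suffices to produce a homotopy $1_X\simeq_{P_I(X)} fx$ in $\ct{C}(I)$. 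Consider the map $(1_X,fx)\colon X\to X\times_I X$; applying $j$ gives $H:=j(1_X,fx)\colon X\to P_I(X)$ with $(s,t)H=(1_X,fx)$, exactly as required. So this direction is essentially immediate once one checks $fx$ lands correctly over $I$.

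For the ``if'' direction, suppose we are given $f\colon I\to X$ with $xf=1_I$ and $H\colon X\to P_I(X)$ with $(s,t)H=(1_X,fx)$. Then $f$ is already a section of $x$, so it remains to produce a section of $(s,t)\colon P_I(X)\to X\times_I X$. Here is where the equivalence relation structure does real work. Given an arbitrary object $B$ and a map $(a,b)\colon B\to X\times_I X$ (with $xa=xb$), we want a map $B\to P_I(X)$ over it; equivalently, we want to show $a\sim_R b$ for all such pairs, and then invoke that $\mathrm{id}\colon P_I X\to P_I X$ is a section of... no — more directly, we want an actual section, so we argue as follows. We have $H a\colon B\to P_I(X)$ witnessing $a\sim_R fxa$, and $Hb\colon B\to P_I(X)$ witnessing $b\sim_R fxb$. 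Since $xa=xb$ we get $fxa=fxb$, so by symmetry $fxb\sim_R b$, and by transitivity $a\sim_R b$, witnessed by a composite built from $\tau$, $\sigma$, $Ha$ and $Hb$. Taking $B=X\times_I X$ and $(a,b)=(p_1,p_2)$ the canonical projections yields the desired section $k\colon X\times_I X\to P_I(X)$ of $(s,t)$. Thus $x$ is contractible.

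The main obstacle — though it is more bookkeeping than genuine difficulty — is making sure the ``equivalence relation'' manipulations are carried out in the correct slice. The transitivity morphism $\tau$ is defined on the pullback $R\times_X R$, so when composing $Ha$ (a path from $a$ to $fxa$) with (the symmetrised) $Hb$ (a path from $fxb=fxa$ to $b$) one must check these factor through $R\times_X R$ appropriately, i.e.\ that the relevant endpoints agree on the nose, which they do because $(s,t)(Ha)=(a,fxa)$ and $(s,t)(\sigma Hb)=(fxb,b)=(fxa,b)$. One should also be mildly careful that everything is happening over $I$: $H\colon X\to P_I(X)$ is a morphism in $\ct{C}(I)$, and precomposition with $a\colon B\to X$ is as morphisms over $I$, so all the equivalence-relation operations $\rho,\sigma,\tau$ for the relation $P_I(X)\to X\times_I X$ apply verbatim. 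Modulo these checks, the proof is a direct diagram chase using Definition \refdefi{eqrelation} and the definition of contractibility.
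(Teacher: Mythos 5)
Your proof is correct and follows essentially the same route as the paper: the forward direction applies the given section of $(s,t)$ to $(1_X,fx)$, and the converse builds the section of $(s,t)$ as the transitivity composite of $Hp_1$ with the symmetrised $Hp_2$ (the paper's $L=\mu(Hp_1,H'p_2)$), using $fxp_1=fxp_2$ to see that the composite is well-defined.
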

\begin{proof}
If $x$ is contractible, then it has a section $f$ and $(s, t): P_I(X) \to X \times_I X$ has some section $L$. Writing $H := L(1,fx)$, we get $(s, t)H = (s,t)L(1, fx) = (1, fx)$.

Conversely, suppose $x: X \to I$ has a section $f$ and there is a map $H: X \to P_I(X)$ such that $(s, t)H = (1,fx)$. Since $(s, t): P_I(X) \to X \times_I X$ is an equivalence relation, we also obtain a map $H': X \to P_I(X)$ with $(s,t)H' = (fx, 1)$ by symmetry and a map $\mu: P_I(X) \times_X P_I(X) \to P_I(X)$ with $s\mu = s\pi_1$ and $t\mu = t\pi_2$ by transitivity. Define $L: X \times_I X \to P_I(X)$ by
\[ L := \mu(Hp_1, H'p_2). \]
This map is well-defined since
\[ tHp_1 = fxp_1 = fxp_2 = sH'p_2. \]
In addition, we have
\[ (s, t)L = (s,t)\mu(Hp_1,H'p_2) = (sHp_1,tH'p_2) = (p_1,p_2) = 1, \]
showing that $L$ is a section of $(s, t): P_I(X) \to X \times_I X$.
\end{proof}

\begin{defi}{pathstructure}
A tribe \ct{C} will be called a \emph{path tribe} if it carries path structure in such a way that:
\begin{enumerate}
\item[(i)] all fibrations $s: PX \to X$ are contractible, and
\item[(ii)] if $p: Y \to X$ is a fibration and $Y \times_X PX$ is the pullback
\diag{ Y \times_X PX \ar[d]_{p_1} \ar[r]^(.6){p_2} & PX \ar[d]^s \\
Y \ar[r]_p & X, }
then there is a map $\Gamma: Y \times_X PX \to Y$ such that $p \Gamma = t p_2$.
\end{enumerate}
\end{defi}

\begin{prop}{fromidtopathstructure}
Any tribe with propositional identity types is a path tribe. In particular, the syntactic category associated to a type theory with propositional identity types and strong sums is a path tribe.
\end{prop}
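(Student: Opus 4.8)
The plan is to derive the two conditions of \refdefi{pathstructure} from the ``$J$-rule'', i.e.\ from condition (1) --- or, equivalently by \reflemm{1equivto1primeinpresenceof2}, from condition (1)$'$. That $\ct{C}$ already carries path structure, so that the notions of contractible map and path tribe make sense, is \reflemm{propidtypeshenceweakpathstructure}; and once the first assertion is proved, the ``in particular'' clause follows at once from \refprop{classifyingcatwithpropidtypes}.

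For condition (ii), let $p : Y \to X$ be a fibration and form the pullback $Y \times_X PX$ of $p$ along $s : PX \to X$, with projections $p_1$ and $p_2$. Pulling $r = r_X : X \to PX$ back along the fibration $p_2$ gives, up to isomorphism, the map $\bar r : Y \to Y \times_X PX$ with $p_1 \bar r = 1_Y$ and $p_2 \bar r = rp$, which is therefore a pullback of $r$ along a fibration, so condition (1) applies to it. Because $tr = 1_X$, the square with $\bar r$ on the left, $p$ on the right, $1_Y$ along the top and $tp_2 : Y \times_X PX \to X$ along the bottom commutes; condition (1) then produces maps $d : Y \times_X PX \to Y$ and $H : Y \to P_X(Y)$ with $pd = tp_2$, $sH = d\bar r$ and $tH = 1_Y$. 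The first of these is exactly what (ii) demands --- set $\Gamma := d$ --- while $H$ is a bonus, witnessing $\Gamma\bar r \simeq 1_Y$ over $X$, the propositional computation rule for transport.

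For condition (i), fix a fibration $x : X \to I$ with path object $(s,t) : P_I X \to X \times_I X$ and reflexivity $r : X \to P_I X$, so $sr = tr = 1_X$, and write $(s',t') : P_X(P_I X) \to P_I X \times_X P_I X$ for the path object of the fibration $P_I X \to X$ over $X$; the goal is that $s : P_I X \to X$ is contractible (the case $I = 1$ is the condition as stated). As $sr = 1_X$, by \reflemm{altcharcontractibility} it suffices to produce $H : P_I X \to P_X(P_I X)$ with $(s',t')H = (1, rs)$, where $rs : P_I X \to P_I X$ is legitimately a map over $X$ since $s(rs) = s$. I would obtain $H$ by pulling $(s',t')$ back along $(1, rs) : P_I X \to P_I X \times_X P_I X$ to a fibration $q : E \to P_I X$ whose fibre over $v$ is the set of homotopies over $X$ from $v$ to $rs(v)$, and then composing any section of $q$ with the projection $E \to P_X(P_I X)$. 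To get such a section I would apply condition (1)$'$ with the identity fibration on $P_I X$ as the ``context'' and $q : E \to P_I X$ as the family eliminated into; it then suffices to give the section after restriction along $r$, where the fibre over $x$ consists of the homotopies over $X$ from $r(x)$ to $r(x)$ (using $sr = 1$), and $x \mapsto \rho(r(x))$ does this, where $\rho : P_I X \to P_X(P_I X)$ is the reflexivity map of the path structure on $\ct{C}(X)$ (which exists by \reflemm{homrelisstableeq}).

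The bulk of the argument consists of diagram chases: checking that $\bar r$ really is a pullback of $r$, that the squares above commute, and that the pulled-back fibrations are as described. I expect the one genuinely delicate point to be in (i), namely carrying out the $J$-rule inside the slice tribe $\ct{C}(X)$: one must identify the family eliminated into with the pullback of the \emph{chosen} path object $P_X(P_I X)$ along $(1, rs)$, and verify that the base point $\rho(r(x))$ is well typed after the substitution $r(x)$. Condition (2) is used only implicitly, through the existence of path structure and its stability under change of base.
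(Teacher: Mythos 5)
Your proposal is correct and follows essentially the same route as the paper: condition (ii) is obtained exactly as in the paper (which delegates it to \reflemm{existencetransport} in the appendix, i.e.\ applying condition (1) to the pullback weak equivalence $(1,rp)$ and the square with $tp_2$ along the bottom), and condition (i) is the paper's application of \reflemm{altcharcontractibility} to $s: PX \to X$ with section $r$. The only cosmetic difference is that for (i) you route through condition (1)$'$ by pulling the path fibration back along $(1,rs)$ and finding a section, whereas the paper applies condition (1) directly to the square with $r$ on the left, $rr$ on top and $(1,rs)$ on the bottom --- but these are interchangeable by \reflemm{1equivto1primeinpresenceof2}, whose proof performs exactly the pullback-and-section reduction you describe.
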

\begin{proof}
Requirement (ii) for a path tribe follows again from the appendix: indeed, it is \reflemm{existencetransport} therein. So it remains to verify that property (i) holds, for which we use \reflemm{altcharcontractibility}.

Since $rs = 1$ it remains to construct a map $H: PX \to P_X(PX)$ such that $(s, t)H = (1,rs)$, where we regard $PX$ as an object in $\ct{C}(X)$ via the source map $s: PX \to X$. The diagram
\diag{ X \ar[r]^(.4){rr} \ar[d]_r & P_X(PX) \ar[d]^{(s, t)} \\
PX \ar[r]_(.35){(1,rs)} & PX \times_X PX }
commutes, so requirement (1) for propositional identity types yields a map $H: PX \to P_X(PX)$ with the desired property.
\end{proof}

\section{Path categories}

\subsection{Definition} We now come to the other main concept of this paper, that of a \emph{path category}. The aim of this section will be to introduce this notion and show, using results from \cite{bergmoerdijk16}, that path categories are tribes with propositional identity types.

A path category consists of a category \ct{C} together with two classes of maps called the \emph{weak equivalences} and the \emph{fibrations}, respectively. Morphisms which belong to both classes of maps will be called \emph{acylic fibrations}. A \emph{path object} on an object $B$ is a factorisation of the diagonal $\Delta: B \to B \times B$ as a weak equivalence $r: B \to PB$ followed by a fibration $(s, t): PB \to B \times B$.

\begin{defi}{fibrationcat} The category \ct{C} will be called a \emph{path category} (short for a \emph{category with path objects}) if the following axioms are satisfied:
\begin{enumerate}
\item Isomorphisms are fibrations and fibrations are closed under composition.
\item The pullback of a fibration along any other map exists and is again a fibration.
\item \ct{C} has a terminal object $1$ and every map $X \to 1$ to the terminal object is a fibration.
\item Isomorphisms are weak equivalences.
\item Weak equivalence satisfy 2-out-of-6: if $f: A \to B$, $g: B \to C$, $h: C \to D$ are three composable maps and both $gf$ and $hg$ are weak equivalences, then so are $f,g,h$ and $hgf$.
\item For any object $B$ there is at least one path space $PB$ (not necessarily functorial in $B$).
\item Every acyclic fibration has a section.
\item The pullback of an acylic fibration along any other map exists is again an acyclic fibration.
\end{enumerate}
\end{defi}

In the paper \cite{bergmoerdijk16} we study these path categories in great detail (with many of the basic results deriving from Brown \cite{brown73}). Here we recall the features of path categories from \cite{brown73,bergmoerdijk16} that will be important for our purposes. They will all be familiar to anyone accustomed with any of the current approaches to abstract homotopy category, such as Quillen model structures.

First of all, the category obtained by inverting the weak equivalences can be constructed very concretely by defining a suitable notion of \emph{homotopy}. Indeed, two parallel arrows $f, g: Y \to X$ will be called \emph{homotopic} if there is a path object $PX$ on $X$ with weak equivalence $r: X \to PX$ and fibration $(s, t): PX \to X \times X$ as well as a morphism $h: Y \to PX$ (the \emph{homotopy}) such that $sh = f$ and $th = g$; in this case we will write $f \simeq g$, or $h: f \simeq g$ if we wish to stress the homotopy. It can be shown that this definition is independent of the choice of path object: that is, if $f$ and $g$ are homotopic via a path object $PX$ and homotopy $h: Y \to PX$, and $P'X$ is another path object with weak equivalence $r': X \to P'X$ and fibration $(s', t'): P'X \to X \times X$, then there is also a homotopy $h': Y \to P'X$ with $s'h' = f$ and $t'h' = g$.

In addition, it can be shown that the homotopy relation is a congruence: it defines an equivalence relation on each hom-set, and composition behaves well with respect to this equivalence relation. This means that one can quotient the category \ct{C} by the homotopy relation: the result is called the \emph{homotopy category} of \ct{C} and is denoted Ho(\ct{C}). The weak equivalences are precisely those morphisms in \ct{C} that become invertible in ${\rm Ho}(\ct{C})$: that is, they coincide with the \emph{homotopy equivalences}. For this reason, Ho(\ct{C}) is the universal solution to inverting the weak equivalences.

Factorisations are another important feature of Quillen model categories. The axioms for a path category only demand that any diagonal $X \to X \times X$ can be factored as a weak equivalence followed by a fibration; however, it can be shown that \emph{any} morphism $f: Y \to X$ in a path category can be factored as a weak equivalence $w_f: Y \to P_f$ followed by a fibration $p_f: P_f \to X$. In fact, one can choose $w_f$ in such a way that it is a section of an acyclic fibration $a_f: P_f \to Y$.

This means in particular that if $p: X \to I$ is a fibration, then the fibrewise diagonal $X \to X \times_I X$ can be factored as a weak equivalence $r: X \to P_I(X)$ followed by fibration $(s, t): P_I(X) \to X \times_I X$. So if $f, g: Y \to X$ are two parallel morphisms and $pf = pg$, we can ask ourselves the question whether there is a morphism $h: Y \to P_I(X)$ such that $sh = f$ and $th = g$. If this is the case, we call $f$ and $g$ \emph{fibrewise homotopic}; this we denote by $f \simeq_I g$ (with the fibration $p: X \to I$ being understood), or $h: f \simeq_I g$ if we again wish to stress the homotopy $h$. As with the ordinary homotopy relation, this can be shown to be independent of the choice of path object and to define an equivalence relation on those classes of morphism that become equal upon postcomposing with $p$.

This fact can be used to show that the notion of a path category is stable under slicing. Indeed, if \ct{C} is a path category and $I$ is an object in \ct{C} one can define a new path category $\ct{C}(I)$: it is the full subcategory of the slice category $\ct{C}/I$ whose objects are the fibrations, while a morphism in $\ct{C}(I)$ is a fibration or a weak equivalence precisely when it is a fibration or a weak equivalence in \ct{C}.

\begin{prop}{pathcatsandslicing} \cite[p.~428]{brown73}
The category $\ct{C}(I)$ is again a path category and for any morphism $f: J \to I$ the pullback functor
\[ f^*: \ct{C}(I) \to \ct{C}(J) \]
preserves fibrations, weak equivalences, the terminal object and pullbacks of fibrations along arbitrary maps.
\end{prop}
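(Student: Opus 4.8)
The plan is to check the eight axioms for a path category on $\ct{C}(I)$ one by one, and then to verify separately the four preservation properties claimed for $f^*$. I will use throughout that an object of $\ct{C}(I)$ is a fibration $p\colon X \to I$, that a morphism is a morphism of $\ct{C}$ commuting with the structure maps to $I$, and that such a morphism counts as a fibration, a weak equivalence, or an acyclic fibration in $\ct{C}(I)$ precisely when its underlying morphism of $\ct{C}$ does; consequently the forgetful functor $\ct{C}(I) \to \ct{C}$ reflects all these classes of maps, and limits that exist in the slice $\ct{C}/I$ are computed as in $\ct{C}$. With these remarks most of the axioms are routine bookkeeping: that isomorphisms are fibrations and weak equivalences, that fibrations and weak equivalences are closed under composition, and the 2-out-of-6 property, are all inherited verbatim from $\ct{C}$; the terminal object of $\ct{C}(I)$ is $1_I\colon I \to I$, and the unique arrow to it from $p\colon X \to I$ is $p$ itself, which is a fibration; every acyclic fibration of $\ct{C}(I)$ has a section, because a section in $\ct{C}$ of an acyclic fibration $a\colon X \to Y$ over $I$ automatically commutes with the maps to $I$ (if $u,v$ are the structure maps and $va = u$, then $us = vas = v$); and for the two pullback axioms I will form the pullback in $\ct{C}$, observe that the pulled-back projection is a fibration (respectively an acyclic fibration) by the corresponding axiom for $\ct{C}$, and note that the new total object lies over $I$ since it is a composite of fibrations.

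The only axiom that genuinely uses the structural theory of path categories recalled above is the existence of path objects. Given an object $p\colon B \to I$ of $\ct{C}(I)$, its diagonal in $\ct{C}(I)$ is the fibrewise diagonal $B \to B \times_I B$, and, as recalled above, this map factors in $\ct{C}$ as a weak equivalence $r\colon B \to P_I(B)$ followed by a fibration $(s,t)\colon P_I(B) \to B\times_I B$. Since $P_I(B) \to B\times_I B \to B \to I$ is a composite of fibrations, $P_I(B)$ is again an object of $\ct{C}(I)$ and the whole factorisation lives in $\ct{C}(I)$, providing the required path object. This completes the proof that $\ct{C}(I)$ is a path category.

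For the base-change functor $f^*\colon \ct{C}(I) \to \ct{C}(J)$, preservation of the terminal object is immediate, preservation of fibrations is the stability of fibrations under pullback in $\ct{C}$, and preservation of pullbacks of fibrations along arbitrary maps follows from the pasting lemma for pullbacks, since such pullbacks are computed in $\ct{C}$ and $f^*$ is itself a pullback functor. The main obstacle will be to show that $f^*$ preserves weak equivalences, the difficulty being that $f$ is \emph{not} assumed to be a fibration, so one cannot directly invoke stability of weak equivalences under pullback along a fibration. My plan is to factor a weak equivalence $w$ of $\ct{C}(I)$ as $w = p_w \circ w_w$ with $w_w$ a section of an acyclic fibration $a_w$ and $p_w$ a fibration, so that by 2-out-of-3 (valid in any path category) $p_w$ is itself an acyclic fibration. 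Applying $f^*$ gives $f^*(w) = f^*(p_w) \circ f^*(w_w)$; here $f^*(p_w)$ and $f^*(a_w)$ are pullbacks of acyclic fibrations along (arbitrary) maps of $\ct{C}$, hence acyclic fibrations; $f^*(w_w)$ is a section of $f^*(a_w)$ by functoriality, hence a weak equivalence again by 2-out-of-3; and therefore $f^*(w)$ is a composite of weak equivalences. The crux is thus that, although weak equivalences between fibrations need not be stable under pullback along an arbitrary map, sections of acyclic fibrations are, and every weak equivalence can be written in terms of them.
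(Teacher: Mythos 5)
Your proof is correct. The paper offers no proof of this proposition, citing Brown instead, and your argument is precisely the classical one: the path-category axioms for $\ct{C}(I)$ are inherited from $\ct{C}$ once one notes that the relevant (co)domains remain fibrant over $I$ and that the fibrewise path object $P_I(B)\to B\times_I B$ serves as the path object in the slice, while preservation of weak equivalences by $f^*$ is reduced, via the factorisation lemma applied in $\ct{C}(I)$, to the stability of acyclic fibrations under pullback along arbitrary maps (axiom (8)) together with 2-out-of-3 -- exactly Brown's route.
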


This proposition is used by Brown to derive the following additional property of path categories:

\begin{prop}{weakeqstableunderpbkalongfibr} \cite[p.~428]{brown73}
In a path category the weak equivalences are stable under pullback along fibrations.
\end{prop}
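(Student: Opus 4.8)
The plan is to reproduce Brown's argument: first reduce, by factoring the weak equivalence, to a special case, and then treat that case by a transport argument. Throughout one uses freely that in a path category every object is fibrant, that two-out-of-three for weak equivalences holds (a standard consequence of the two-out-of-six axiom together with the fact that isomorphisms are weak equivalences), and that weak equivalences coincide with homotopy equivalences; a convenient consequence of the last point is that any section $i$ of an acyclic fibration $a$ is a homotopy equivalence with homotopy inverse $a$ — indeed, if $b$ is a homotopy inverse of $a$ then $b = bai \simeq i$ — so in particular $i$ is a weak equivalence and $ia \simeq 1$.

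For the reduction, let $w \colon A \to B$ be a weak equivalence and $f \colon C \to B$ a fibration. Using the factorisation recalled above, write $w = p i$ with $p \colon M \to B$ a fibration and $i \colon A \to M$ a section of an acyclic fibration $a \colon M \to A$. Then $i$ is a weak equivalence by the remark above, so from $w = p i$ and two-out-of-three $p$ is a weak equivalence as well, i.e.\ an acyclic fibration. Writing the pullback of $w$ along $f$ as the composite
\[ A \times_B C \;=\; A \times_M (M \times_B C) \;\longrightarrow\; M \times_B C \;\longrightarrow\; C, \]
the second map is the pullback of the acyclic fibration $p$ along $f$, hence an acyclic fibration and in particular a weak equivalence (this last fact can alternatively be deduced from the stability of weak equivalences under change of base between slice categories recorded above), while the first map is the pullback of $i$ along the fibration $M \times_B C \to M$. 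By two-out-of-three it therefore suffices to show that the pullback of a section $i \colon A \to A'$ of an acyclic fibration $a \colon A' \to A$ along an arbitrary fibration $h \colon W \to A'$ is a weak equivalence.

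For this remaining case put $P := A \times_{A'} W$, with its projections $\pi \colon P \to A$ (a fibration, as a pullback of $h$) and $\bar\imath \colon P \to W$ (the pullback of $i$, which we must show is a weak equivalence). From $ia \simeq 1_{A'}$ we obtain, precomposing with $h$, a homotopy $iah \simeq h \colon W \to A'$; since $h$ is a fibration, the homotopy lifting property of fibrations — a consequence of the diagonal filler property recalled in the introduction, established in \cite{bergmoerdijk16} — produces a map $\beta \colon W \to W$ with $h\beta = iah$ together with a homotopy $\beta \simeq 1_W$ lying over the homotopy $iah \simeq h$. Because $i(ah) = h\beta$, the pair $\mu := (ah,\beta) \colon W \to P$ is well defined, and $\bar\imath\,\mu = \beta \simeq 1_W$. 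In the other direction, from $i\pi = h\bar\imath$ one computes $ah\,\bar\imath = a i\,\pi = \pi$, so that $\mu\bar\imath = (\pi,\beta\bar\imath)$ agrees with $1_P$ in the first coordinate, while $\beta\bar\imath \simeq \bar\imath$ in the second; assembling these gives $\mu\bar\imath \simeq 1_P$, so that $\bar\imath$ is a homotopy equivalence, hence a weak equivalence, as required.

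The one genuinely delicate point — and the main obstacle — is the last step, namely assembling the homotopies in the two coordinates into a single homotopy $\mu\bar\imath \simeq 1_P$ inside the pullback $P$. The naive pairing of the constant homotopy on $\pi$ with the lifted homotopy $\beta\bar\imath \simeq \bar\imath$ need not commute strictly with $i$ and $h$ over $A'$, because the $W$-homotopy lies over the (a priori non-degenerate) loop obtained by restricting $iah \simeq h$. One repairs this by contracting that loop — using that $A'$ again carries path objects, so the homotopy $ia \simeq 1_{A'}$ can be taken degenerate on the image of $i$ up to a further homotopy — and then arguing fibrewise over $A'$, so that the corrected homotopy becomes a fibrewise homotopy over $h$ and therefore pairs with the constant homotopy on $\pi$ to give an honest homotopy in $P$. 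This bookkeeping is routine in the homotopy theory of path categories (cf.\ \cite{brown73,bergmoerdijk16}); with it in place the proof is complete.
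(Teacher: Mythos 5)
Your reduction is exactly Brown's: factor the weak equivalence as a section $i$ of an acyclic fibration followed by a fibration, observe the latter is an acyclic fibration by two-out-of-three, dispose of it by axiom (8), and reduce to pulling back $i\colon A\to A'$ along a fibration $h\colon W\to A'$. Up to that point the argument is correct (and is the route the paper intends via its citation of Brown). The problem is the final step, which you yourself identify as ``the main obstacle'' and then do not carry out. The difficulty is real: a homotopy $\mu\bar\imath\simeq 1_P$ in $P=A\times_{A'}W$ is a map into a path object of $P$, and pairing the constant homotopy on $\pi$ with the lifted homotopy $\beta\bar\imath\simeq\bar\imath$ only lands in $P$ if the latter is \emph{vertical} for $h$, i.e.\ a fibrewise homotopy over $A'$; the homotopy you produced lies over the non-constant homotopy $iah\simeq h$, so it is not. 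Your sketched repair (``contract that loop'', take $ia\simeq 1_{A'}$ ``degenerate on the image of $i$ up to a further homotopy'', ``argue fibrewise over $A'$'') is not a proof: $P\to A'$ is not a fibration, so fibrewise arguments over $A'$ are not directly available for $P$, and arranging for the lifted homotopy to become vertical is precisely the non-trivial bookkeeping at stake --- it is essentially the content of the transport lemmas in the appendix of this paper. Declaring it routine leaves a genuine gap.

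The gap can be avoided entirely, which is presumably why \refprop{pathcatsandslicing} is stated immediately before the proposition. Pull back the acyclic fibration $a\colon A'\to A$ along the fibration $ah\colon W\to A$ to obtain an acyclic fibration $W\times_A A'\to W$; its section $(1_W,h)\colon W\to W\times_A A'$ is a weak equivalence by two-out-of-three, and it is a morphism in $\ct{C}(A')$ between the fibrations $h\colon W\to A'$ and the projection $W\times_A A'\to A'$. Applying the change-of-base functor $i^*\colon\ct{C}(A')\to\ct{C}(A)$, which preserves weak equivalences by \refprop{pathcatsandslicing}, and using $ai=1_A$ to identify $i^*(W\times_A A'\to A')$ with $ah\colon W\to A$, one checks that under the canonical identifications $i^*(1_W,h)$ is exactly $\bar\imath\colon P\to W$. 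Hence $\bar\imath$ is a weak equivalence, with no assembly of homotopies in the pullback required. I recommend replacing your final two paragraphs with this argument, or else with a careful execution of the vertical-homotopy version you gesture at.
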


Lifting properties form the other main ingredient of Quillen model categories, besides factorisations; indeed, any Quillen model category comes equipped with two weak factorisation systems. Path categories are less well-behaved; indeed, when it comes to lifing properties in path categories the following result from \cite{bergmoerdijk16} seems to be the best possible.

\begin{theo}{weakliftinginfibrcat} \cite[Theorem 2.38]{bergmoerdijk16}
Suppose
\diag{ D \ar[d]_w \ar[r]^l & B \ar[d]^p \\
C \ar[r]_k & A }
is a commuting square in a path category \ct{C} with a weak equivalence $w$ on the left and a fibration $p$ on the right. Then there is a map $d: C \to B$ such that $pd = k$ and $dw \simeq_A l$ (where $\simeq_A$ refers to the fibrewise homotopy relation via the fibration $p$).
\end{theo}

This gives us enough information to derive that path categories are tribes with propositional identity types:

\begin{prop}{fromfibrcattoweakidentitytypes}
Any path category is a tribe with propositional identity types.
\end{prop}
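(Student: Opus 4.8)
The plan is to verify that a path category $\ct{C}$, with its given fibrations, is a tribe, then equip it with path structure, and finally check conditions (1) and (2) of \refdefi{comprcatwithpropidtypes}. The tribe axioms are immediate: isomorphisms are fibrations and fibrations compose (axiom 1 of \refdefi{fibrationcat}), maps to $1$ are fibrations (axiom 3), and pullbacks of fibrations exist and are fibrations (axiom 2). To put path structure on $\ct{C}$ we must choose, for each fibration $x: X \to I$, an equivalence relation $(s,t): P_I(X) \to X \times_I X$. The factorisation discussion recalled before \refprop{pathcatsandslicing} gives, for the fibrewise diagonal $X \to X \times_I X$, a weak equivalence $r: X \to P_I(X)$ followed by a fibration $(s,t)$; I would show this is an equivalence relation in the tribe-theoretic sense of \refdefi{eqrelation}. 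Reflexivity is $r$ itself. For symmetry and transitivity, I would use \refprop{weakeqstableunderpbkalongfibr} to see that the composites $P_I(X) \to X$ (via $s$, then $r$) and similarly via $t$ are weak equivalences, and then invoke \refthm{weakliftinginfibrcat}: lifting the swap map $(t,s): P_I(X) \to X \times_I X$ against $(s,t)$ along $r$ produces $\sigma$, and lifting against the appropriate composite produces $\tau$ on $P_I(X) \times_X P_I(X)$; one must check the lifts satisfy the required strict equalities $p\sigma = (p_2,p_1)$ and $p\tau = (p_1\pi_1, p_2\pi_2)$, which follows since $(s,t)$ is a fibration and the squares commute strictly on the relevant side. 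For the stability clause of \refdefi{weakpathstr}, given $f: J \to I$, both $P_J(f^*X) \to f^*X \times_J f^*X$ and $f^*(P_IX) \to f^*X \times_J f^*X$ are weak-equivalence-then-fibration factorisations of the same fibrewise diagonal (using that $f^*$ preserves fibrations, weak equivalences and the relevant pullbacks, by \refprop{pathcatsandslicing}, together with \refprop{weakeqstableunderpbkalongfibr} to see $f^*r$ is a weak equivalence); so by \reflemm{eqrelinducingthesameeqrel} and the standard fact that any two such factorisations are connected by maps commuting over $X \times_I X$ up to homotopy — in fact on the nose after a lift — the two equivalence relations are similar.

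Next I would verify condition (2), that for any $k: J \to I$ there are morphisms between $P_J(k^*A)$ and $k^*(P_IA)$ commuting over $k^*(A \times_I A) \cong k^*A \times_J k^*A$. But this is exactly the similarity statement just established, rephrased through \reflemm{eqrelinducingthesameeqrel}; indeed $P_J(k^*A)$ and $k^*(P_IA)$ are similar equivalence relations on $k^*A \to J$, so the comparison maps $H$ and $K$ of that lemma provide what is required.

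For condition (1), the substantive one, I would take a fibration $\alpha: A \to I$, use the path object $r: A \to P_IA$, $(s,t): P_IA \to A \times_I A$ just constructed, and verify the lifting-type property. So suppose $g: V \to U$ is a pullback of $r$ along a fibration, and we are given a commuting square with $g$ on the left and a fibration $f: B \to A$ on the right, with bottom $l: U \to A$ and top $k: V \to B$. Since $r$ is a weak equivalence and $g$ is a pullback of $r$ along a fibration, $g$ is a weak equivalence by \refprop{weakeqstableunderpbkalongfibr}. Hence \refthm{weakliftinginfibrcat} applies to the square and yields a map $d: U \to B$ with $fd = l$ and $dg \simeq_A k$, where $\simeq_A$ is fibrewise homotopy via $f$; unwinding this, there is a homotopy witness $H: V \to P_A(B)$ with $sH = dg$ and $tH = k$, which is precisely the data demanded by condition (1). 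One subtlety: \refthm{weakliftinginfibrcat} as stated gives $dw \simeq_A l$ in its own notation, so I must be careful to orient the homotopy correctly (source $dg$, target $k$, with the path object $P_A(B)$ taken in the slice $\ct{C}(A)$); if the orientation comes out reversed one simply post-composes $H$ with the symmetry map $\sigma$ for the equivalence relation $P_A(B) \to B \times_A B$, which exists by the path structure just built on $\ct{C}(A)$ via \reflemm{homrelisstableeq}(2) (or rather its path-category analogue, \refprop{pathcatsandslicing}).

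The main obstacle I anticipate is not any single deep step but the bookkeeping around \emph{strict} versus \emph{homotopical} commutativity: condition (1) and \refdefi{eqrelation} demand that certain triangles commute on the nose ($fd = l$, $sH = dg$, $tH = k$, $p\sigma = (p_2,p_1)$, etc.), whereas \refthm{weakliftinginfibrcat} natively produces data commuting only up to fibrewise homotopy on one side. The resolution is that in each case the side required to be strict is the one involving a fibration ($f$, or $(s,t)$), and \refthm{weakliftinginfibrcat} does give strict commutativity of the lower triangle; the homotopy lives on the upper triangle, which is exactly where condition (1) tolerates it (it asks for $H$, a homotopy, rather than strict equality $dg = k$). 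So the proof is really an exercise in matching the output of \refthm{weakliftinginfibrcat} to the precise shape of (1), plus checking the equivalence-relation axioms for the chosen path objects; I would relegate the latter verifications to the cited results of \cite{brown73,bergmoerdijk16} where possible.
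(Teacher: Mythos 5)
Your argument is correct and matches the paper's: condition (1) follows from the factorisation of the fibrewise diagonal, \refprop{weakeqstableunderpbkalongfibr} and \reftheo{weakliftinginfibrcat} (whose lower triangle is strict and whose upper triangle carries exactly the homotopy $H$ that (1) asks for), and condition (2) follows by comparing two (weak equivalence, fibration)-factorisations of the same map via \reftheo{weakliftinginfibrcat} together with preservation of path objects under change of base from \refprop{pathcatsandslicing}. The one place you do more than necessary is the verification that $(s,t)\colon P_IX \to X \times_I X$ is an equivalence relation: \refdefi{comprcatwithpropidtypes} only demands conditions (1) and (2), and the equivalence-relation structure is deduced from these afterwards in \reflemm{propidtypeshenceweakpathstructure} via the appendix, so that bookkeeping can be omitted here.
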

\begin{proof}
We need to check the two conditions for having propositional identity types. Condition (1) is an immediate consequence of the factorisation of any map as a weak equivalence followed by a fibration, \refprop{weakeqstableunderpbkalongfibr} and \reftheo{weakliftinginfibrcat}.

Suppose that in a path category we have two ways of factoring $f: C \to A$ as a weak equivalence followed by a fibration, say $f = pw = p'w'$. Then
\diag{ C \ar[r]^w \ar[d]_{w'} & B \ar[d]^p \\
B' \ar[r]_{p'} & A }
commutes, so \reftheo{weakliftinginfibrcat} implies that there are maps $g: B \to B'$ and $h: B' \to B$ such that $p'g = p$ and $ph = p'$. This means in particular that any two path objects on an object $X$ determine similar equivalence relations. Moreover, \refprop{pathcatsandslicing} implies that path objects are preserved by change of base; from this it follows that in path categories condition (2) for having propositional identity types is satisfied as well.
\end{proof}

\section{Path categories from type theories with strong sums}

The aim of this section is to prove that the syntactic category associated to a type theory with strong sums and propositional identity types carries a path category structure. Since such a syntactic category is a tribe with propositional identity types, it suffices to prove the converse of \refprop{fromfibrcattoweakidentitytypes}: that is, it suffices to show that in a tribe with propositional identity types one can define a class of weak equivalences in such a way that it becomes a path category. In \refprop{fromidtopathstructure} we have proved that tribes with propositional identity types are path tribes; this means that it would be sufficient to prove that in any path tribe one can identify a class of weak equivalences in such a way that it becomes a path category. Indeed, that is what we will do in this section.

Therefore throughout this section $\ct{C}$ will be a path tribe. We have to identify a suitable class of weak equivalences: for these we take the \emph{homotopy equivalences}, defined as follows.

\begin{defi}{homotopyapp} A map $f: X \to Y$ is a \emph{homotopy equivalence} if there is a map $g: Y \to X$ (a \emph{homotopy inverse}) such that the composites $fg$ and $gf$ are homotopic to the identity on $Y$ and $X$, respectively.
\end{defi}

For the proof that with these homotopy equivalences as the weak equivalences \ct{C} becomes a path category, it will be convenient to introduce the auxiliary notion of a left map.

\begin{defi}{leftmap}
A map $l: D \to C$ in \ct{C} will be called a \emph{left map}, if for any commutative square
\diag{ D \ar[r]^n \ar[d]_l & B \ar[d]^p \\
C \ar[r]_m & A }
with a fibration $p$ on the right, there is a map $d: C \to B$ such that $pd = m$ (we will call such a map a \emph{lower filler}).
\end{defi}

\begin{lemm}{basicfactsaboutleftmaps}
\begin{enumerate}
\item If $f: J \to I$ is a fibration, then $\Sigma_f:\ct{C}(J) \to \ct{C}(I)$ preserves and reflects left maps.
\item If $f: Y \to X$ is a map with a homotopy section, that is, a map $g: X \to Y$ such that $fg \simeq 1$, then $f$ is a left map.
\end{enumerate}
\end{lemm}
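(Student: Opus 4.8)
The statement has two parts, and I would prove them in order.

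For part (1), I would first recall that $\Sigma_f$ is postcomposition with the fibration $f$, so a commuting square in $\ct{C}(I)$ whose objects are fibrations $q: B \to I$, $q': A \to I$ and whose left leg is $\Sigma_f l$ for some $l: D \to C$ in $\ct{C}(J)$ is precisely a commuting square in $\ct{C}$ of the underlying objects, but with the extra data that everything lies over $I$ and, more to the point, that the square can be viewed as living over $J$ after pulling back along $f$. The key observation is that a fibration in $\ct{C}(I)$ with codomain lying over $f: J \to I$ is the same as a fibration in $\ct{C}(J)$, because $\ct{C}(I)(J \xrightarrow{f} I) \simeq \ct{C}(J)$ via the equivalence that sends a fibration $A \to J$ over $I$ to itself. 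So a lifting problem against a fibration in $\ct{C}(I)$, with the whole square lying over $f$, is literally a lifting problem in $\ct{C}(J)$; this gives the "reflects" direction immediately, and for "preserves" one uses that change of base along $f$ (which has a section-like behaviour here since $f$ is a fibration) together with $\Sigma_f$ lets one transport a lifting problem in $\ct{C}(I)$ against an arbitrary fibration down to one in $\ct{C}(J)$. I would be slightly careful here: the subtlety is that a lifting problem in $\ct{C}(I)$ against $\Sigma_f l$ need not a priori lie over $f$; one must observe that since the domain and codomain of $\Sigma_f l$ both factor through $f$, the bottom map $m$ of the square, being a map over $I$ out of something over $J$, automatically factors appropriately, so the square does descend. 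This factoring step is the main obstacle in part (1).

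For part (2), suppose $f: Y \to X$ has a homotopy section $g: X \to Y$ with $H: fg \simeq 1_X$, and consider a commuting square with $f$ on the left and a fibration $p: B \to A$ on the right, say $n: Y \to B$, $m: X \to A$, $pn = mf$. The idea is to use $g$ to get a first approximation $ng: X \to B$ to the lower filler — it satisfies $p(ng) = mfg$, which is not quite $m$ but is homotopic to it via $mH: fg \simeq 1_X$ postcomposed appropriately, i.e. $p(ng) = m(fg) \simeq m$. So I have a map $ng$ into $B$ whose composite with $p$ is homotopic to $m$, and I want to correct it to land exactly on $m$. This is exactly the job of the lifting/transport property: the homotopy $m \circ H$ (suitably a homotopy $mfg \simeq m$ in the base, lifted to a path in $A$) together with the fact that $p$ is a fibration lets me transport $ng$ along this path to a genuinely new map $d: X \to B$ with $pd = m$. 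Concretely I would invoke requirement (ii) of a path tribe (the transport map $\Gamma: Y \times_X PX \to Y$, applied here to $p: B \to A$ and the path $PA$): from $ng$ and the path in $A$ from $p(ng)$ to $m$ I build an element of $B \times_A PA$ and apply $\Gamma$ to get $d$ with $p d = m$. That $d$ is a lower filler is all that is required, so I do not even need to check $df \simeq n$.

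\textbf{Main obstacle.} The genuinely delicate point is the descent argument in part (1) — making precise that a lifting problem in $\ct{C}(I)$ against $\Sigma_f l$ is equivalent to one in $\ct{C}(J)$ against $l$ — which hinges on the identification $\ct{C}(I)/(J \xrightarrow{f} I) \simeq \ct{C}(J)$ and on both ends of $\Sigma_f l$ lying over $f$. Part (2) is comparatively routine once one has the transport property (ii) of a path tribe in hand; the only thing to get right there is the bookkeeping of which homotopy lives in which base. I would present part (1) first, since part (2) is a clean application of transport and does not logically depend on part (1).
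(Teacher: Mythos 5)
Your part (1) is in substance the paper's argument: ``preserves'' is the adjunction $\Sigma_f\dashv f^*$ with $f^*$ preserving fibrations, and ``reflects'' comes down to the observation that a lower filler produced in $\ct{C}(I)$ for a square lying over $J$ is automatically a morphism over $J$ (since $p$ and $m$ are). Just be aware that the transposed lifting problem in $\ct{C}(J)$ has $f^*p$ on the right, not $p$; the square does not ``descend'' with the same right-hand leg, it transposes.

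Part (2) has a genuine gap. Your correction step needs a homotopy in $A$ from $p(ng)=mfg$ to $m$, i.e.\ a map $K:X\to PA$ with $(s,t)K=(mfg,m)$, in order to feed $(ng,K)$ into the transport $\Gamma:B\times_A PA\to B$. The only homotopy you have is $H:X\to PX$ with $(s,t)H=(fg,1_X)$, which lives in $PX$, and turning it into $m\circ H$ in $PA$ requires that the homotopy relation be stable under postcomposition (a map $PX\to PA$ over $m\times m$). At this point in the development that is exactly what is \emph{not} available: \refrema{notcongruence} warns that postcomposition-stability is not known for general path tribes, and its eventual proof (\reflemm{standardcomprcatisfibredtribe}) depends on the left-map machinery, including this very lemma, so your argument is circular. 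The fix is the paper's first move, which you omit in part (2): pull $p$ back along $m$ and replace the square by one with $m=1_X$, so that $A=X$, the transport becomes $\Gamma:B\times_X PX\to B$, and $H$ itself can be used directly, giving $d=\Gamma(ng,H)$ with $pd=tH=1$. With that reduction your argument goes through verbatim; without it, the step ``$p(ng)\simeq m$, now lift the homotopy to $A$'' cannot be justified.
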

\begin{proof} (1): From the fact that $\Sigma_f$ has a right adjoint preserving fibrations, it follows that left maps are preserved by $\Sigma_f$.

To show that $\Sigma_f$ reflects left maps, suppose that $l: Y \to X$ fits in a commutative square
\diag{ Y \ar[r]^n \ar[d]_l & B \ar[d]^p \\
X \ar[r]_m & A }
in $\ct{C}(J)$ with a fibration $p$ on the right, while $\Sigma_f(l)$ is a left map. We need to construct a lower filler. By pulling back $p$ along $m$ if necessary, we may assume that $m = 1$. Using that $\Sigma_f$ preserves fibrations, we see that $\Sigma_f(p)$ has a section. But then $p$ has a section as well.

(2): Suppose $f: Y \to X$ fits in a commutative square
\diag{ Y \ar[r]^n \ar[d]_f & B \ar[d]^p \\
X \ar[r]_m & A }
with a fibration $p$ on the right. As we did in (1), we may assume that $m = 1$. Let $g: X \to Y$ and $h: X \to PX$ be such that $sh = fg$ and $th = 1$, and let $\Gamma: B \times_A PA \to B$ be such that $p\Gamma = tp_2$. Putting $d := \Gamma(ng, h)$, we obtain
\[ pd = p\Gamma(ng, h) = tp_2(ng, h) = th = 1 = m, \]
as desired.
\end{proof}

\begin{lemm}{charcontrtypes}
The following are equivalent for an object $A$:
\begin{enumerate}
\item $A$ is contractible.
\item The unique map $!: A \to 1$ is a homotopy equivalence.
\item There is a left map $a: 1 \to A$.
\end{enumerate}
\end{lemm}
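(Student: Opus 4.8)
The plan is to prove the cycle of implications $(1) \Rightarrow (2) \Rightarrow (3) \Rightarrow (1)$, using the characterisations of contractibility already in hand.

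For $(1) \Rightarrow (2)$: if $A$ is contractible, then by \reflemm{altcharcontractibility} (applied to the fibration $!: A \to 1$, so that $I = 1$) there are maps $f: 1 \to A$ and $H: A \to PA$ with $!f = 1$ and $(s,t)H = (1, f{!})$. The map $f$ is then a homotopy inverse to $!$: we have $!f = 1_1$ on the nose, and $H$ is a homotopy $1_A \simeq f{!}$, so $f{!} \simeq 1_A$. Hence $!$ is a homotopy equivalence.

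For $(2) \Rightarrow (3)$: if $!: A \to 1$ is a homotopy equivalence, let $a: 1 \to A$ be a homotopy inverse. Then $!a = 1_1$ automatically and $a{!} \simeq 1_A$, so in particular $a$ has $!$ as a homotopy retraction, i.e. $!a \simeq 1$; by part (2) of \reflemm{basicfactsaboutleftmaps} (with the roles set so that $a$ is the map with a homotopy section $!$, reading the statement as: a map with a homotopy section is a left map — here $a{!} \simeq 1$ exhibits $!$ as a homotopy section of... one must be slightly careful about variance). Concretely: \reflemm{basicfactsaboutleftmaps}(2) says any $f$ admitting $g$ with $fg \simeq 1$ is a left map; taking $f := a$ and $g := {!}$ we need $a{!} \simeq 1_A$, which is exactly what we have. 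Hence $a: 1 \to A$ is a left map.

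For $(3) \Rightarrow (1)$: suppose $a: 1 \to A$ is a left map. First, $a$ is a section of $!: A \to 1$ since any map into $1$ equals $1_1$, so $!$ has a section. It remains to produce a section of $(s,t): PA \to A \times A$; by \reflemm{altcharcontractibility} it suffices to find $H: A \to PA$ with $(s,t)H = (1_A, a{!})$, i.e. a homotopy $1_A \simeq a{!}$. To get this, apply the left-map property of $a$ to a square whose right-hand fibration is $(s,t): PA \to A \times A$, with bottom map $(1_A, a{!}): A \to A \times A$ and top map $1 \to PA$ given by $r a$ (note $(s,t) r a = \Delta_A a = (a, a)$, and one checks this square commutes after observing $(1_A, a{!})$ restricted along $a: 1 \to A$ is $(a, a)$ since $!a = 1_1$). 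The left map property yields a lower filler $H: A \to PA$ with $(s,t)H = (1_A, a{!})$, as required; \reflemm{altcharcontractibility} then gives that $A$ is contractible.

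The main obstacle is bookkeeping the square in $(3) \Rightarrow (1)$: the left-map property of $a: 1 \to A$ produces a filler over $A$ for squares indexed by $A$, and one must set up the commuting square so that the existing vertex $1 \to PA$ is compatible, i.e. check that $(1_A, a{!}) \circ a = (a,a) = (s,t) \circ (ra)$, which uses $!a = 1_1$. Everything else is a direct unwinding of \reflemm{altcharcontractibility} and \reflemm{basicfactsaboutleftmaps}.
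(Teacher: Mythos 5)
Your proposal is correct and follows essentially the same route as the paper: the cycle $(1)\Rightarrow(2)\Rightarrow(3)\Rightarrow(1)$, using \reflemm{altcharcontractibility} for the first and last implications, \reflemm{basicfactsaboutleftmaps}(2) for the second, and the same lifting square $(s,t)\circ ra = (1_A,a!)\circ a$ for $(3)\Rightarrow(1)$. The commutativity check you flag as the "main obstacle" is exactly the one the paper's diagram encodes, and your verification of it is right.
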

\begin{proof}
(1) $\Rightarrow$ (2): If $A$ is contractible, then by \reflemm{altcharcontractibility} there exist a map $k: 1 \to A$ and a homotopy $H: A \to PA$ such that $H: 1 \simeq k!$. Since $!k = 1$, this shows that $!: A \to 1$ is a homotopy equivalence.

(2) $\Rightarrow$ (3): If $!: A \to 1$ is a homotopy equivalence, then it has a homotopy inverse $a: 1 \to A$. This $a$ is a homotopy equivalence as well, hence a left map by the previous lemma.

(3) $\Rightarrow$ (1): If there is a left map $a: 1 \to A$, then we can find a lower filler $H$ for the square
\diag{ 1 \ar[r]^{ra} \ar[d]_a & PA \ar[d]^{(s, t)} \\
A \ar[r]_(.4){(1,a!)} \ar@{.>}[ur]_H & A \times A. }
Such an $H$ is a homotopy showing $1 \simeq a!$ and hence $A$ is contractible by \reflemm{altcharcontractibility}.
\end{proof}

\begin{lemm}{acyclicfibrationsOK} The following are equivalent for a fibration $p: E \to X$:
\begin{enumerate}
\item $p$ is contractible.
\item $p$ has a section which is a left map.
\item $p$ is a homotopy equivalence.
\end{enumerate}
\end{lemm}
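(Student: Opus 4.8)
The plan is to prove the cycle of implications $(1) \Rightarrow (2)$, $(2) \Rightarrow (3)$, $(3) \Rightarrow (2)$ and $(2) \Rightarrow (1)$, which yields the full equivalence. Conceptually this is the relative version of \reflemm{charcontrtypes}: viewing $p \colon E \to X$ as an object of the slice tribe $\ct{C}(X)$ --- which again carries path structure by \reflemm{homrelisstableeq}, the path object of $p$ there being the fibrewise path object $(s,t) \colon P_X(E) \to E \times_X E$ --- condition (1) says exactly that this object is contractible in $\ct{C}(X)$, condition (2) (via \reflemm{basicfactsaboutleftmaps}(1) applied to $\Sigma_{\,!} \colon \ct{C}(X) \to \ct{C}$ for the fibration $! \colon X \to 1$) that there is a left map into it from the terminal object, and condition (3) that it is a homotopy equivalence. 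One could in principle just quote \reflemm{charcontrtypes} in $\ct{C}(X)$, but it will be cleaner to argue directly.

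Three of the four implications are routine diagram chases. For $(2) \Rightarrow (1)$: given a section $\sigma$ of $p$ which is a left map, form the commuting square with $\sigma$ on the left, the fibrewise path fibration $(s,t) \colon P_X(E) \to E \times_X E$ on the right, top leg $r\sigma$ and bottom leg $(1_E, \sigma p)$ (it commutes since $p\sigma = 1_X$); a lower filler $E \to P_X(E)$ is precisely the map $H$ required to apply \reflemm{altcharcontractibility}, so $p$ is contractible. For $(2) \Rightarrow (3)$: use the same left and bottom legs but the \emph{absolute} path fibration $(s,t) \colon PE \to E \times E$ on the right; a lower filler is then a homotopy $1_E \simeq \sigma p$, and together with $p\sigma = 1_X$ this exhibits $\sigma$ as a homotopy inverse of $p$. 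For $(3) \Rightarrow (2)$: if $g$ is a homotopy inverse of $p$, transport $g$ along a homotopy $pg \simeq 1_X$ using the map of \refdefi{pathstructure}(ii) for the fibration $p$ to obtain an honest section $f$ of $p$; since $f \simeq g$ (because $g = gpf \simeq f$, using $pf = 1_X$, $gp \simeq 1_E$ and \reflemm{homrelisstableeq}(1)) we get $fp \simeq gp \simeq 1_E$, and then \reflemm{basicfactsaboutleftmaps}(2) shows that $f$ is a left map.

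The remaining implication $(1) \Rightarrow (2)$ is where the work lies. By \reflemm{altcharcontractibility} a contractible fibration $p$ has a section $\sigma$ together with a fibrewise homotopy $H \colon 1_E \simeq_X \sigma p$. To see that $\sigma$ is a left map, consider a commuting square with $\sigma$ on the left and a fibration $q$ on the right; pulling $q$ back, we may assume the bottom leg is $1_E$, so that $q \colon B \to E$ is a fibration equipped with a map $n \colon X \to B$ with $qn = \sigma$, and we must produce a section of $q$. The natural move is to transport $n \circ p \colon E \to B$ --- a partial section of $q$ lying over $\sigma p$ --- back along $H$ to a section lying over $1_E$; this requires a transport map $B \times_E P_X(E) \to B$ for $q$, that is, condition (ii) of \refdefi{pathstructure} \emph{in the slice $\ct{C}(X)$} rather than in $\ct{C}$ itself.

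So the crux, and the main obstacle, is to verify that $\ct{C}(X)$ inherits the path-tribe structure --- in particular condition (ii), i.e.\ that transport along fibrewise paths is available --- so that the argument of \reflemm{basicfactsaboutleftmaps}(2) can be run relative to $X$; equivalently, one needs that a fibrewise homotopy between maps over $X$ can be promoted to an ordinary one. This is exactly the point at which \reflemm{homrelisstableeq} and the stability of the tribe axioms under slicing must be pushed a little further (and is presumably where the appendix is invoked). Once this is in hand, the cycle closes and everything else is a routine diagram chase using \reflemm{altcharcontractibility}, \reflemm{basicfactsaboutleftmaps} and the stability properties of the homotopy relation.
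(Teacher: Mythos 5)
Your cycle is sound in outline, and three of the four implications are complete: the direct lower-filler arguments for $(2)\Rightarrow(1)$ and $(2)\Rightarrow(3)$, and the transport-plus-\reflemm{basicfactsaboutleftmaps}(2) argument for $(3)\Rightarrow(2)$, all go through. (The paper instead closes its cycle with a direct $(3)\Rightarrow(1)$, obtaining a section of $p$ as a lower filler for the identity square and then invoking \reflemm{charcontrtypes} in the slice; this is only a reorganisation.)

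The gap you flag in $(1)\Rightarrow(2)$ is genuine as you leave it, and the way the paper closes it is precisely the route you mention at the outset and then set aside: work in $\ct{C}(X)$ and come back with $\Sigma_X$. Concretely, \reflemm{altcharcontractibility} gives the section $\sigma$ together with a fibrewise homotopy $1_E \simeq_X \sigma p$, which says exactly that $\sigma$, viewed as a map from the terminal object of $\ct{C}(X)$ to $(E,p)$, has $p$ as a homotopy section \emph{in} $\ct{C}(X)$; \reflemm{basicfactsaboutleftmaps}(2) applied in $\ct{C}(X)$ then makes $\sigma$ a left map there, and \reflemm{basicfactsaboutleftmaps}(1) (applied to $\Sigma_X$, which preserves left maps) pushes it down to a left map in $\ct{C}$. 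This is what the paper phrases as ``apply \reflemm{charcontrtypes} in $\ct{C}(X)$ and then $\Sigma_X$''. It does presuppose exactly what you identify: that $\ct{C}(X)$ is again a path tribe, in particular that transport along fibrewise paths is available for fibrations over $X$. The paper treats this slice-stability as part of the standing set-up rather than proving it here; for the tribes of interest it is automatic, because conditions (1) and (2) for propositional identity types are stated for arbitrary fibrations $A \to I$ and the appendix framework from which clauses (i) and (ii) of \refdefi{pathstructure} are derived is explicitly stable under slicing. So your instinct about where the appendix enters is right, but as written your $(1)\Rightarrow(2)$ is not finished: you should either run the argument in $\ct{C}(X)$ as above or record the slice-stability of the path-tribe axioms explicitly before transporting $np$ along $H$.
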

\begin{proof} (1) $\Rightarrow$ (2): If $p$ is contractible, then by the previous lemma it has a section which is a left map in $\ct{C}(X)$. Applying $\Sigma_X$ yields a left map which is a section of $p$.

(2) $\Rightarrow$ (3): If $p$ has a section $c$ which is also a left map, then there is a lower filler $h$ for
\diag{ X \ar[d]_c \ar[r]^{rc} & PE \ar[d]^{(s, t)} \\
E \ar[r]_(.4){(1, cp)} \ar@{.>}[ur]_h & E \times E, }
showing that $cp \simeq 1$ and that $p$ is a homotopy equivalence.

(3) $\Rightarrow$ (1): Here we have to be a bit careful as we do not know (yet) that the homotopy relation is preserved by postcomposition; however, we do know that it is preserved by precomposition (see \reflemm{homrelisstableeq} and \refrema{notcongruence}). So suppose that $p: E \to X$ is a homotopy equivalence with homotopy inverse $f$. Since homotopy equivalences are left maps, the square
\diag{ E \ar[r]^1 \ar[d]_p & E \ar[d]^p \\
X \ar[r]_1 & X }
has a lower filler, meaning that $p$ has a section $c$. From $fp \simeq 1$ it follows that
\[ cp \simeq fpcp = fp \simeq 1. \]
Hence $c$ is a homotopy equivalence and a left map in both \ct{C} and $\ct{C}(X)$ by \reflemm{basicfactsaboutleftmaps}. Therefore $p$ is contractible by the previous lemma.
\end{proof}

\begin{lemm}{rleft}
Any map $r: X \to PX$ witnessing reflexivity is a homotopy equivalence and hence a left map.
\end{lemm}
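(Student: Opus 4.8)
The plan is to exhibit the source map $s: PX \to X$ of the path object as a homotopy inverse of $r$. One direction is free: since $(s,t)r = \Delta_X = (1_X, 1_X)$, we immediately read off $sr = 1_X$ (and likewise $tr = 1_X$). It remains to prove $rs \simeq 1_{PX}$, and here the essential input is that $\ct{C}$ is a path tribe, so that by requirement (i) in the definition of a path tribe the fibration $s: PX \to X$ is contractible; hence, by \reflemm{acyclicfibrationsOK}, $s$ is itself a homotopy equivalence.

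So I would fix a homotopy inverse $g: X \to PX$ of $s$, with $sg \simeq 1_X$ and $gs \simeq 1_{PX}$. The delicate point — and the one place where care is needed — is that at this stage of the development we only know the homotopy relation to be stable under precomposition (\reflemm{homrelisstableeq}), not under postcomposition (\refrema{notcongruence}); so both manipulations below must be arranged as precompositions. First, precomposing the homotopy $gs \simeq 1_{PX}$ with $r: X \to PX$ and using $sr = 1_X$ gives $g = g(sr) = (gs)r \simeq r$. Then, precomposing the resulting homotopy $g \simeq r$ with $s: PX \to X$ gives $gs \simeq rs$, whence $rs \simeq gs \simeq 1_{PX}$.

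Combining $sr = 1_X$ with $rs \simeq 1_{PX}$ shows that $r$ and $s$ are mutually homotopy inverse, so $r$ is a homotopy equivalence. Finally, $r$ then has $s$ as a homotopy section (indeed $rs \simeq 1_{PX}$), and therefore $r$ is a left map by \reflemm{basicfactsaboutleftmaps}(2). The only genuine obstacle is keeping track of the precomposition-only constraint; once one observes that both steps needed are precompositions, the argument reduces to a short diagram chase.
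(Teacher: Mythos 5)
Your proof is correct and follows essentially the same route as the paper: invoke path-tribe axiom (i) and \reflemm{acyclicfibrationsOK} to get a homotopy inverse of $s$, then derive $rs \simeq 1_{PX}$ from $sr = 1_X$ by a chase using only precomposition stability (the paper's chain $rs \simeq s^{-1}srs = s^{-1}s \simeq 1$ is just a compressed form of your two-step argument). Your explicit attention to the precomposition-only constraint is exactly the right concern and is handled correctly.
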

\begin{proof}
By assumption the map $s: PX \to X$ is contractible. But then it follows from the previous lemma that $s$ is a homotopy equivalence with some homotopy inverse $s^{-1}$. This implies that for any $r: X \to PX$ with $sr = 1$ we must have
\[ rs \simeq s^{-1}srs = s^{-1}s \simeq 1, \]
showing that $r$ is a homotopy equivalence as well.
\end{proof}

\begin{lemm}{standardcomprcatisfibredtribe}
The homotopy relation is a congruence, and hence the homotopy equivalences satisfy 2-out-of-6.
\end{lemm}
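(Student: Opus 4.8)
The plan is to derive both assertions of the lemma from a single new fact: the homotopy relation is stable under \emph{postcomposition}. Stability under precomposition is already available (\reflemm{homrelisstableeq}(1)), and $\simeq$ is an equivalence relation on each hom-set; so once postcomposition is in place, $\simeq$ is a congruence in the usual sense, since composable $f \simeq f'$ and $g \simeq g'$ give $gf \simeq gf' \simeq g'f'$ by transitivity.

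To prove postcomposition-stability, I would fix a map $h : X \to Y$ and produce an ``action on paths'' $\bar h : PX \to PY$ as a lower filler of
\diag{ X \ar[r]^{rh} \ar[d]_r & PY \ar[d]^{(s,t)} \\ PX \ar[r]_(.4){(hs,\,ht)} & Y \times Y, }
a square which commutes because $(s,t)(rh) = (h,h) = (hs,ht)r$. The left edge $r : X \to PX$ is a left map by \reflemm{rleft}, so a lower filler exists; this gives $\bar h$ with $(s,t)\bar h = (hs, ht)$, and then for any homotopy $H : B \to PX$ with $(s,t)H = (f,g)$ the composite $\bar h H$ satisfies $(s,t)(\bar h H) = (hf, hg)$, witnessing $hf \simeq hg$. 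It is exactly here that the path-tribe hypothesis enters: $r$ is a left map only because $s : PX \to X$ is contractible, which is why postcomposition-stability --- and hence the congruence property --- fails for general tribes with path structure, cf.\ \refrema{notcongruence}. From the congruence property two facts that I would then use without further comment follow at once: a composite of homotopy equivalences is a homotopy equivalence, and any map homotopic to a homotopy equivalence is one.

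Finally I would deduce 2-out-of-6. Given $f : A \to B$, $g : B \to C$, $h : C \to D$ with $gf$ and $hg$ homotopy equivalences, pick homotopy inverses $u$ of $gf$ and $v$ of $hg$. Then $fu$ is a right homotopy inverse of $g$ (as $g(fu) = (gf)u \simeq 1$) and $vh$ a left one (as $(vh)g = v(hg) \simeq 1$); the routine computation $vh \simeq (vh)\bigl(g(fu)\bigr) = \bigl((vh)g\bigr)(fu) \simeq fu$, legitimate now that $\simeq$ is a congruence, shows these coincide up to homotopy, so $g$ has a genuine homotopy inverse $p$. Then $f \simeq p(gf)$ and $h \simeq (hg)p$ exhibit $f$ and $h$ as homotopic to composites of homotopy equivalences, hence as homotopy equivalences, and $hgf$ is then a composite of three homotopy equivalences. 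Equivalently, once $\simeq$ is a congruence one may pass to ${\rm Ho}(\ct{C})$, in which the homotopy equivalences are precisely the isomorphisms, and isomorphisms satisfy 2-out-of-6 in any category.

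The one genuine obstacle is the postcomposition step, i.e.\ constructing $\bar h$; everything after it --- the congruence property and 2-out-of-6 --- is formal manipulation of homotopies using associativity and the equivalence-relation axioms. That construction succeeds precisely because we are working in a path tribe rather than merely a tribe with path structure.
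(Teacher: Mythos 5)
Your proposal is correct and follows essentially the same route as the paper: the key step in both is to obtain $Pf : PX \to PY$ as a lower filler of the very same square (left edge $r$, a left map by \reflemm{rleft}; right edge the fibration $(s,t) : PY \to Y \times Y$), which yields stability of $\simeq$ under postcomposition and hence the congruence property. Your 2-out-of-6 argument is just a spelled-out version of the paper's one-line appeal to the fact that, for any congruence, the maps becoming isomorphisms in the quotient satisfy 2-out-of-6.
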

\begin{proof}
In view of \reflemm{homrelisstableeq} it suffices to show that the homotopy relation is preserved by postcomposition. To see this, note that for any map $f: X \to Y$ there is a commutative square of the form
\diag{ X \ar[d]_r \ar[r]^f & Y \ar[r]^r & PY \ar[d]^{(s, t)} \\
PX \ar[r]_(.4){(s, t)} & X \times X \ar[r]_{f \times f} & Y \times Y}
with a left map on the left and a fibration on the right. So there is a map $Pf: PX \to PY$ such that $(s, t)Pf = (fs, ft)$. This shows that the homotopy relation is also preserved by postcomposition and hence a congruence. To show that the homotopy equivalences satisfy 2-out-of-6, one simply observes that for any congruence the class of morphisms that become isomorphisms in the quotient satisfies 2-out-of-6.
\end{proof}

We conclude:
\begin{prop}{frompathstrtofibredfibrcat}
Let \ct{C} be a path tribe. With the homotopy equivalences as the weak equivalences \ct{C} also has the structure of a path category.
\end{prop}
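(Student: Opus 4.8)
The plan is to verify the eight axioms of \refdefi{fibrationcat} for the tribe $\ct{C}$ equipped with the homotopy equivalences as weak equivalences, drawing on the lemmas already established in this section. Axioms (1), (2) and (3) hold because $\ct{C}$ is a tribe, so the fibration-related conditions are immediate. Axiom (6) (existence of at least one path object on every $B$) is part of the path-tribe data: the path structure supplies $(s,t) \colon PB \to B \times B$ and $r \colon B \to PB$, and by \reflemm{rleft} this $r$ is a homotopy equivalence, so it really is a path object in the path-category sense. Axiom (4) (isomorphisms are weak equivalences) is trivial since any isomorphism is its own homotopy inverse. Axiom (5), the 2-out-of-6 property, is exactly the content of \reflemm{standardcomprcatisfibredtribe}.

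The remaining work is axioms (7) and (8), concerning acyclic fibrations. For axiom (7), every acyclic fibration has a section: an acyclic fibration is a fibration that is also a homotopy equivalence, and \reflemm{acyclicfibrationsOK} (equivalence of (1) and (3)) tells us such a map is contractible, hence in particular has a section. For axiom (8), the pullback of an acyclic fibration along any map exists and is again an acyclic fibration: existence and the fibration part come from the tribe axioms, so what must be checked is that the pullback of a fibration which is a homotopy equivalence is again a homotopy equivalence. Here I would use the characterisation from \reflemm{acyclicfibrationsOK} that acyclic fibrations coincide with \emph{contractible} fibrations, combined with \reflemm{contractiblemaps}, which says that contractible fibrations are stable under pullback along arbitrary maps. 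Thus the pullback of a contractible fibration is contractible, hence a homotopy equivalence again by \reflemm{acyclicfibrationsOK}.

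I expect the main obstacle to be precisely the interplay around axiom (8) and, behind it, the fact that the homotopy relation is not \emph{a priori} known to be a congruence in a general path tribe — this is why one cannot naively argue that pullbacks of homotopy equivalences are homotopy equivalences by chasing homotopy inverses, and why routing through the notion of contractible fibration (which \emph{is} manifestly pullback-stable via the stability clause (2) of path structure) is essential. The congruence property itself, needed for 2-out-of-6 in axiom (5), is the genuinely substantive input, but it has already been secured in \reflemm{standardcomprcatisfibredtribe} via the left-map machinery; once that is in hand the verification of the path-category axioms is a matter of assembling the lemmas of this section in the right order.

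\begin{proof}
We check the axioms of \refdefi{fibrationcat}. Axioms (1), (2) and (3) hold because $\ct{C}$ is a tribe. Axiom (4) holds since any isomorphism is its own homotopy inverse. Axiom (5) is \reflemm{standardcomprcatisfibredtribe}. For axiom (6), the path structure on $\ct{C}$ provides for every object $B$ a factorisation of $\Delta \colon B \to B \times B$ as $r \colon B \to PB$ followed by a fibration $(s, t) \colon PB \to B \times B$, and $r$ is a homotopy equivalence by \reflemm{rleft}; hence $PB$ is a path object in the required sense.

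An acyclic fibration is by definition a fibration that is also a homotopy equivalence, which by \reflemm{acyclicfibrationsOK} is the same thing as a contractible fibration. Axiom (7) then follows since a contractible fibration has a section by definition. For axiom (8), if $p$ is an acyclic fibration and $f$ is an arbitrary map, the pullback of $p$ along $f$ exists and is a fibration because $\ct{C}$ is a tribe; since $p$ is a contractible fibration, \reflemm{contractiblemaps} shows that this pullback is again a contractible fibration, hence an acyclic fibration by \reflemm{acyclicfibrationsOK}. This verifies all the axioms, so $\ct{C}$ is a path category.
\end{proof}
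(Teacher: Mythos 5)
Your proof is correct and follows essentially the same route as the paper: axioms (1)--(3) from the tribe structure, (4) from reflexivity of the homotopy relation, (5) from \reflemm{standardcomprcatisfibredtribe}, (6) from \reflemm{rleft}, and (7)--(8) from \reflemm{acyclicfibrationsOK} together with \reflemm{contractiblemaps}. The only quibble is the phrase ``any isomorphism is its own homotopy inverse''---you mean that its inverse serves as a homotopy inverse, which is exactly the paper's appeal to reflexivity of the homotopy relation.
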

\begin{proof}
Axioms (1--3) follow from the fact that \ct{C} is a tribe. Axiom (4) follows from the fact that the homotopy relation is reflexive, while axiom (5) was \reflemm{standardcomprcatisfibredtribe}. Axiom (6) follows from \reflemm{rleft}, while axiom (7) follows from \reflemm{acyclicfibrationsOK} and axiom (8) follows from \reflemm{acyclicfibrationsOK} and \reflemm{contractiblemaps}.
\end{proof}

To summarise, we have shown that:
\begin{theo}{pathtribeisfibcat}
The following are equivalent for a tribe \ct{C}:
\begin{enumerate}
\item \ct{C} has propositional identity types.
\item \ct{C} is a path tribe.
\item One can identify a class of weak equivalences on \ct{C} which give \ct{C} the structure of a path category.
\end{enumerate}
\end{theo}
\begin{proof}
 (1) $\Rightarrow$ (2) was \refprop{fromidtopathstructure}, (2) $\Rightarrow$ (3) was \refprop{frompathstrtofibredfibrcat}, while (3) $\Rightarrow$ (1) was \refprop{fromfibrcattoweakidentitytypes}.
\end{proof}

\begin{coro}{classcatofsttypetheory}
The classifying category of a type theory with strong sums and propositional identity types carries the structure of a path category.
\end{coro}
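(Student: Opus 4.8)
The plan is to read the corollary off from the results already established in this section together with \refprop{classifyingcatwithpropidtypes}. Let $\ct{C}$ be the classifying category of a type theory with strong sums and propositional identity types. First I would invoke \refprop{classifyingcatwithpropidtypes} to conclude that $\ct{C}$ is a tribe with propositional identity types, i.e.\ that it satisfies conditions (1) and (2) of the definition of such a tribe. Then I would apply the implication (1)~$\Rightarrow$~(3) of \reftheo{pathtribeisfibcat}, which says precisely that any tribe with propositional identity types admits a class of weak equivalences — namely the homotopy equivalences of \refdefi{homotopyapp} — making it into a path category. Chaining these two facts gives the statement, so the proof is essentially a one-line composition of earlier results.

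It is perhaps worth noting where the strong-sums hypothesis actually enters, and where the real content lies. The hypothesis is used only in the passage to \refprop{classifyingcatwithpropidtypes}: via \refprop{fibranddisplaymapsforttwithstrongsums}, strong sums guarantee that every fibration in $\ct{C}$ is isomorphic to a display map, which is what allows the syntactic elimination and computation rules for ${\rm Id}$ to be recast as the categorical condition (1)$'$, and hence as condition (1) by \reflemm{1equivto1primeinpresenceof2}; condition (2) then follows from the fact that substitution preserves the identity-type structure strictly. On the homotopy-theoretic side, the implication (1)~$\Rightarrow$~(3) of \reftheo{pathtribeisfibcat} packages \refprop{fromidtopathstructure} (a tribe with propositional identity types is a path tribe) with \refprop{frompathstrtofibredfibrcat} (a path tribe, equipped with its homotopy equivalences, is a path category), the substantive ingredients there being the congruence property of the homotopy relation and the $2$-out-of-$6$ axiom from \reflemm{standardcomprcatisfibredtribe} together with the analysis of acyclic fibrations in \reflemm{acyclicfibrationsOK}.

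Consequently there is no genuine obstacle to overcome at the level of the corollary itself: all of the work has been done in the preceding propositions and lemmas, and the only remaining task is to verify that the hypotheses of \refprop{classifyingcatwithpropidtypes} and \reftheo{pathtribeisfibcat} line up, which they do by construction. (The case of type theories \emph{without} strong sums, where fibrations need no longer be isomorphic to display maps, is genuinely harder and is treated separately in the next section.)
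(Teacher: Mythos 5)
Your proof is correct and matches the paper's: the paper likewise disposes of the corollary in one line by combining \reftheo{pathtribeisfibcat} with the earlier propositions (it cites \refprop{fromidtopathstructure}, entering the chain of equivalences at condition (2) rather than condition (1), but this is the same argument). Your additional remarks on where the strong-sums hypothesis enters are accurate commentary, not a divergence in method.
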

\begin{proof} This follows from the previous theorem and \refprop{fromidtopathstructure}.
\end{proof}

\section{Path categories from general type theories}

In this section we will generalise \refcoro{classcatofsttypetheory} and show that the classifying category of any type theory with propositional identity types has the structure of a path category.

But before we do this we will first introduce some terminology. Recall that the dependent projections are those context morphisms in the classifying category which project away some types at the end of a context. This means that the dependent projections can be stratified into different levels, depending on how many types get projected away. Indeed, we will call a dependent projection in the syntactic category an \emph{$n$-display map} if it projects away $n$ types; we will also say that the \emph{rank} of the dependent projection is $n$. If $X \to 1$ is an $n$-display map (in other words, $X$ is a context of length $n$), then we will say that the object $X$ has rank $n$. Instead of 1-display map we will often simply say \emph{display map} and instead of object of rank 1, we will often simply say \emph{type}.

What additional structure does the syntactic category have if it comes equipped with propositional identity types? Translating the syntax into categorical terms we obtain the following:
\begin{quote}
($\clubsuit$) If $X \to I$ is a display map, then the diagonal $X \to X \times_I X$ can be factored as a map $r: X \to P_IX$ followed by a display map $(s, t): P_IX \to X \times_I X$. This choice of $P_I(X)$ is stable in the sense that if $f: J \to I$ is any map then $P_J(f^*X)$ and $f^*P_I(X)$ are isomorphic. In addition, the map $r$ has the property that if $g$ is any pullback of it along a fibration and \diag{ V \ar[r]^k \ar[d]_{g} & B \ar[d]^p \\
U \ar[r]_l & A, }
is any commutative square with a display map $p$ on the right, then there are maps $d: U \to B$ and $H: V \to P_A(B)$ such that $p d = l$ and $(s, t)H = (gd, k)$.
\end{quote}
Indeed, what we will do now is assume that we are given a tribe \ct{C} such that:
\begin{itemize}
\item[--] For each natural $n \in \NN$ there is a class of $n$-display maps and the classes of $n$-display maps and $m$-display maps are disjoint if $n \not= m$.
\item[--] All $n$-display maps are fibrations and for every fibration $f$ there is some natural number $n$ and $n$-display map $g$ such $f$ and $g$ are isomorphic.
\item[--] The only 0-display maps are the identities.
\item[--] The class of $n$-display maps is stable under pullback.
\item[--] If $f$ is an $n$-display map and $g$ is an $m$-display map, then $fg$ is an $(n+m)$-display map; conversely, if $h$ is an $(n+m)$-display map, then there exist unique $f$ and $g$ such that $h = fg$ with $f$ being an $n$-display map and $g$ being an $m$-display map.
\item[--] The property $(\clubsuit)$ holds.
\end{itemize}
Note that if \ct{C} has this structure, then so does $\ct{C}(X)$ for any object $X$. Our task will be to show that \ct{C} is a path category. We do this by showing that \ct{C} is a path tribe and appealing to \reftheo{pathtribeisfibcat}. In the process we will call maps of the form $r: X \to P_IX$ for display maps $X \to I$ as well as their pullbacks along fibrations \emph{weak equivalences}. The reason for this is that assumption $(\clubsuit)$ implies that these weak equivalences together with the display maps satisfy the axioms whose consequences we study in the appendix. Indeed, in this section we will often use results from the appendix.

\begin{lemm}{syntacticathas1levelpathstr} The category \ct{C} carries path structure in such a way that if $g$ is a weak equivalence, $f$ is a fibration and
\diag{ V \ar[d]_{g} \ar[r]^k & B \ar[d]^f \\
U \ar[r]_l  & A }
commutes, then there are maps $d: U \to B$ and $H: V \to P_A(B)$ such that $fd = l$ and $H: dg \simeq_A k$.
\end{lemm}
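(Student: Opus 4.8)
The plan is to reduce the lemma to the appendix. The first step is to check that the display maps, together with the weak equivalences -- which here are the pullbacks along fibrations of the reflexivity maps $r\colon X\to P_IX$ attached to display maps $X\to I$ by $(\clubsuit)$ -- satisfy the axioms (1)--(5) studied in the appendix: the closure and pullback-stability conditions, the requirement on maps into the terminal object, and the fact that the only $0$-display maps are identities are all immediate from the tribe and stratification hypotheses, while the two clauses of $(\clubsuit)$ supply precisely the factorisation of the diagonal of a display map as a weak equivalence followed by a display map, and the lifting property of $r$ against display maps. Once this is in place, the appendix produces (by an induction on rank) for \emph{every} fibration $x\colon X\to I$ a factorisation of $\Delta_X\colon X\to X\times_I X$ as a weak equivalence $r\colon X\to P_IX$ followed by a fibration $(s,t)\colon P_IX\to X\times_I X$; by \reflemm{homiseqrel} the latter is an equivalence relation, and compatibility with change of base up to similarity is inherited, level by level, from the isomorphism $P_J(f^*X)\cong f^*(P_IX)$ that $(\clubsuit)$ guarantees for display maps. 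This is exactly the path structure of \refdefi{weakpathstr}.

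It remains to establish the weak lifting statement, which is the analogue in this setting of \reftheo{weakliftinginfibrcat}: given a commuting square with a weak equivalence $g$ on the left and a fibration $f$ on the right, one must produce $d\colon U\to B$ with $fd=l$ and a fibrewise homotopy $H\colon dg\simeq_A k$. I would prove this by induction on the rank of $f$. When $f$ is a display map the statement is the second clause of $(\clubsuit)$, since there the map $H$ satisfies $(s,t)H=(dg,k)$, which is precisely the assertion $H\colon dg\simeq_A k$. For the inductive step, factor $f$ as a composite $B\xrightarrow{q}C\xrightarrow{p}A$ with $q$ a display map and $p$ of rank one less, using the decomposition hypothesis; apply the induction hypothesis to the square with $p$ on the right and $qk$ on top to obtain $d'\colon U\to C$ with $pd'=l$ together with a homotopy $d'g\simeq_A qk$; then lift $d'$ through $q$ by transporting $k$ along this homotopy -- using \reflemm{existencetransport} for the display map $q$ -- descending the result along $g$ to a section of a suitable pullback of $q$ over $U$, and straightening that section by one further application of the display-map case; finally concatenate the homotopies obtained at the two stages -- which is legitimate because $(s,t)\colon P_A(B)\to B\times_A B$ is an equivalence relation -- to get $H$.

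The verification of the appendix axioms and the two base cases are routine; the real obstacle is the bookkeeping in the inductive step for the lifting property. Two points need attention. First, the induction hypothesis naturally yields homotopies living over $C$, whereas the conclusion is phrased over $A$, so one must track which of the path objects $P_C(-)$ and $P_A(-)$ is in play at each stage and use that the path structure is stable under change of base along $C\to A$ -- together with \reflemm{homrelisstableeq}, which says that change of base preserves the homotopy relation. Second, the filler coming from the rank-$(n-1)$ case and the transport performed at the display-map level contribute homotopies that have to be composed in the right order and with the right orientations, and getting this right is where the symmetry and transitivity of the equivalence relations $P_A(-)$ are genuinely used. Beyond this, no idea is needed that is not already contained in $(\clubsuit)$ and in the appendix.
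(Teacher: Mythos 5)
Your overall strategy---verify the appendix axioms, then induct on the rank of the display map---is the right one, but the proposal has two problems, one architectural and one that is a genuine gap. Architecturally, the path structure and the lifting property cannot be established in two separate passes as you describe: the appendix only provides path objects for display maps (axiom (4) there), and the construction of $P_A(C)$ for an $(n+1)$-display map is the main content of this lemma, not something \reflemm{homiseqrel} delivers. That construction requires a transport structure $\Gamma$, which is obtained by applying the \emph{rank-$n$ lifting property} to the weak equivalence $(1,rp)$ of \reflemm{recurringpullback}, and the equivalence-relation property of the result comes from \reftheo{mainconstrforTconnections}, which you never invoke. So the two statements must be proved by a single simultaneous induction; your proposal never actually says how the higher-rank path objects are built.

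The genuine gap is in your inductive step for the lifting property, and it comes from peeling the display map off at the wrong end. You factor $f$ as $B\xrightarrow{q}C\xrightarrow{p}A$ with $q$ a display map and $p$ of rank $n$, apply the induction hypothesis to $p$ to get $d'$ and a homotopy $d'g\simeq_A qk$ valued in $P_A(C)$, and then propose to transport $k$ through $q$ along that homotopy ``using \reflemm{existencetransport}''. That lemma does not apply: it gives transport in a display map along the path object of its codomain only when the codomain is a \emph{type}, whereas here the homotopy lives in the \emph{constructed} path object $P_A(C)$ of a rank-$n$ map. To build such a transport one would need a lower filler against the pullback of the reflexivity section $C\to P_A(C)$ of that constructed equivalence relation---but this section is not among the designated weak equivalences (only the maps $r\colon X\to P_IX$ attached to display maps, and their pullbacks along fibrations, are), so neither $(\clubsuit)$ nor the induction hypothesis produces the filler, and appealing to the fact that such sections are ``acyclic'' would be circular, since that is proved later from the present lemma. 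The paper's decomposition $f=pq$ with the $1$-display map $p\colon B\to A$ at the \emph{codomain} end avoids this: the transport is then always along $P_A(B)$ for a display map $p$, where $(\clubsuit)$ and \reflemm{recurringpullback} supply everything, and the final homotopy is read off directly from the pullback presentation of $P_A(C)$ rather than by concatenating homotopies living over different bases (your concatenation step would again need a comparison map $P_C(B)\to P_A(B)$ that is not yet available at that stage of the induction).
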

\begin{proof}
The idea of the proof is to show the following statement by induction on $n$:
\begin{quote}
For each $n$-display map $B \to A$ one can define an $n$-display map $P_AB \to B \times_A B$ which is an equivalence relation and is such that for any weak equivalence $g$ and commuting square
\diag{ V \ar[d]_{g} \ar[r]^k & B \ar[d]^f \\
U \ar[r]_l  & A }
there are maps $d: U \to B$ and $L: V \to P_A(B)$ such that $pf = l$ and $L: k \simeq_A dg$.
\end{quote}
The only $0$-display maps are identities, so this statement is trivial for $n = 0$.

Now suppose that the statement above holds for $n$; we will show it holds for $n+1$ as well. So let $Y \to I$ be an $(n+1)$-display map; since all structure is stable under slicing, we may just as well assume that $I = 1$. This means that there is an $n$-display map $p: Y \to X$ to a type $X$. The map $(1, rp): Y \to Y \times_X PX$ is a weak equivalence by \reflemm{recurringpullback} from the appendix, so we can apply the induction hypothesis to the diagram
\diag{  Y  \ar[r]^1 \ar[d]_{(1, rp)} & Y \ar[d]^p \\
Y \times_X PX  \ar[r]_(.6){tp_2} & X, }
yielding a transport structure $\Gamma: Y \times_X PX \to Y$ together with a homotopy $H: Y \to P_X(Y)$ such that $p \Gamma = tp_2$ and $tH = 1$ and $sH = \Gamma(1, rp)$. This means that we are in a position to apply \reftheo{mainconstrforTconnections} to $p$ and $(s, t): P_X(Y) \to Y \times_X Y$ and construct a new equivalence relation $PY \to Y \times Y$ by taking two pullbacks:
\diag{ P_X(Y) \ar[d] & & PY \ar[ll] \ar[d] \\
Y \times_X Y & & Y \times_X PX \times_X Y \ar[r]^(.6){p_2} \ar[d]_{(p_1,p_3)} \ar[ll]^(.55){(\Gamma(p_1,p_2),p_3)} & PX \ar[d]^{(s, t)} \\
& & Y \times Y \ar[r]_{f \times f} & X \times X.}
Writing $(\sigma, \tau): PY \to Y \times Y$ for the map down the middle, one sees that it is an $(n+1)$-display map, as desired. Alternatively, one may construct $PY$ as the pullback
\diag{ PY \ar[r]^{q_3} \ar[d]_{(q_1, q_2)} & P_X(Y) \ar[d]^{s} \\
Y \times_X PX \ar[r]_(.6){\Gamma} &  Y,}
with $(\sigma, \tau) = (q_1, tq_3)$. It is this second presentation that we will use below.

Now suppose $g: V \to U$ is a weak equivalence fitting into a commutative square
\diag{ V \ar[r]^k \ar[d]_{g} & C \ar[d]^f \\
U \ar[r]_l & A, }
with an $(n+1)$-display map $f: C \to A$ on the right. The proof will be finished once we show that one may construct a map $d: U \to C$ such that $fd = l$ together with a homotopy $L: V \to P_A(C)$ such that $L: k \simeq_A dg$.

We factor $f$ as $pq$ where $p: B \to A$ is a display map and $q: C \to B$ is an $n$-display map. Our assumption $(\clubsuit)$ applied to
\diag{ V \ar[d]_{g} \ar[r]^k & C \ar[r]^q & B \ar[d]^p \\
U  \ar[rr]_l & & A }
yields a map $e: U \to B$ and a homotopy $K: V \to P_A(B)$ such that $l = pe$ and $K: qk \simeq_A e g$. By \reflemm{recurringpullback} again, the map $(1, rq): C \to C \times_B P_A(B)$ is a weak equivalence, so the induction hypothesis applied to
\diag{   C  \ar[r]^1 \ar[d]_{(1, rq)} & C \ar[d]^q \\
C \times_B P_A(B) \ar[r]_(.65){tp_2} & B, }
in $\ct{C}(B)$ yields a transport structure \[ \Gamma: C \times_B P_A(B) \to C \] such that $q \Gamma = tp_2$. Let $k' := \Gamma(k,K): V \to C$. Then
\[ qk' = q\Gamma(k, K) = tp_2(k, K) = tK = eg, \]
so
\diag{ V \ar[d]_{g} \ar[r]^{k'} & C \ar[d]^q \\
U \ar[r]_e & B }
commutes. Applying the induction hypothesis to $q$ again, but now in \ct{C}, one obtains a map $d: U \to C$ such that $qd = e$ together with a homotopy $H: k' \simeq_B dg$. Note that we have $fd = pqd = pe = l$, so it remains to show that $dg \simeq_A k$.

By construction $P_A(C)$ is the pullback
\diag{ P_A(C) \ar[d]_{(q_1, q_2)} \ar[r]^{q_3} & P_{B}(C) \ar[d]^s \\
C \times_{B} P_A(B) \ar[r]_(.6)\Gamma & C,}
so we have a map $L: Y \to P_A(C)$ given by $L := (k, K, H)$. Then
\[ \sigma L = q_1(k, K, H) = k \]
and
\[ \tau L = tq_3(k, K, H) = tH = dg. \]
This completes the induction step.

It now follows that \ct{C} has path structure: because all $P_A(B) \to B \times_A B$ are equivalence relations, requirement (1) for path structure is satisfied. Requirement (2) follows the stability condition in ($\clubsuit$) and \refprop{dingetje} in the appendix.
\end{proof}

\begin{lemm}{liftsinsyntacticcat}
For any fibration $f: Y \to X$ there is a transport map $\Gamma: Y \times_X PX \to Y$ with $f \Gamma = tp_2$.
\end{lemm}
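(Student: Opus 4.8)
The statement asks for a fibrewise transport map $\Gamma \colon Y \times_X PX \to Y$ over a fibration $f \colon Y \to X$, where $PX$ is the chosen path object on $X = P_1 X$. The idea is to run essentially the same argument as in \reflemm{syntacticathas1levelpathstr}, but now applied to the specific square that produces a transport structure. First I would reduce to the case where $f$ is an $n$-display map for some $n$ (using that every fibration is isomorphic to a display map, and that having such a $\Gamma$ is invariant under replacing $f$ by an isomorphic map). Then I would proceed by induction on $n$, exactly as in the previous lemma; indeed, the inductive statement proved there already bundles together the existence of a suitable $P_A(B)$ \emph{and} — implicitly, via its use in constructing $\Gamma$ in the successor step — the transport structure.

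**Key steps.** The cleanest route is to invoke \reflemm{syntacticathas1levelpathstr} directly. Given the fibration $f \colon Y \to X$, consider the map $(1, r p_Y)$ — but more precisely: apply \reflemm{recurringpullback} to see that $(1, rf) \colon Y \to Y \times_X PX$ is a weak equivalence, since it is a pullback of $r \colon X \to PX$ along $f$ (here $r \colon X \to PX$ is the reflexivity map of the path object on $X$, and we are pulling back along $f$ in the appropriate slice). Then feed the commuting square
\diag{ Y \ar[d]_{(1, rf)} \ar[r]^1 & Y \ar[d]^f \\
Y \times_X PX \ar[r]_(.6){t p_2} & X }
into \reflemm{syntacticathas1levelpathstr}: its conclusion yields maps $d \colon Y \times_X PX \to Y$ and $H \colon Y \to P_X(Y)$ with $f d = t p_2$ and $H \colon d (1, rf) \simeq_X 1$. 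Setting $\Gamma := d$ gives exactly $f \Gamma = t p_2$, which is the required identity (the homotopy $H$ is a bonus that records that $\Gamma$ restricts to the identity along $r$, but it is not needed for the statement as phrased).

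**Checking the square commutes.** The one routine verification is that the square above actually commutes: the top-then-right composite is $f \colon Y \to X$, and the left-then-bottom composite is $t p_2 (1, rf) \colon Y \to X$. Now $p_2 (1, rf) = rf \colon Y \to PX$, and $t \circ r = 1_X$ since $(s,t) r = \Delta_X$, so $t p_2 (1, rf) = t r f = f$. Hence the square commutes, and the application of \reflemm{syntacticathas1levelpathstr} is legitimate.

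**Main obstacle.** There is no real obstacle: the heavy lifting — the inductive construction of the path objects $P_A(B)$ for higher-display maps and the proof that they are equivalence relations — was already carried out in \reflemm{syntacticathas1levelpathstr}, and \reflemm{recurringpullback} from the appendix supplies the fact that $(1, rf)$ is a weak equivalence. The only thing to be slightly careful about is bookkeeping with slices: one applies \reflemm{syntacticathas1levelpathstr} with $A = X$ in the ambient category $\ct{C}$, and checks that the path object $P_X(Y)$ appearing in its conclusion is the one produced by the construction, so that the statement matches. Beyond that the argument is a direct instantiation.
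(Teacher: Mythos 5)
Your argument works only when $X$ is a type (an object of rank $1$), and this is precisely the case the paper does \emph{not} need to prove here --- it is already \reflemm{existencetransport} of the appendix. The gap is in the claim that $(1, rf)\colon Y \to Y \times_X PX$ is a weak equivalence. In Section~6 the designated weak equivalences are, by definition, the maps $r\colon Z \to P_I Z$ for \emph{display maps} (i.e.\ $1$-display maps) $Z \to I$, together with their pullbacks along fibrations; correspondingly, \reflemm{recurringpullback} carries the hypothesis that the codomain of $f$ is a \emph{type}, because its proof needs $r\colon X \to PX$ to be one of these designated weak equivalences (appendix axiom (4)). For an $X$ of rank $n+1$, the object $PX$ is the one manufactured by the inductive pullback construction in \reflemm{syntacticathas1levelpathstr}, and its reflexivity map $X \to PX$ is \emph{not} known to be a weak equivalence in this restricted sense at this stage of the development (that it is a homotopy equivalence, hence eventually a weak equivalence of the resulting path category, is part of what is being proved). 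Consequently neither \reflemm{recurringpullback} nor the lifting conclusion of \reflemm{syntacticathas1levelpathstr} --- which requires a genuine weak equivalence on the left --- can be applied to your square, and the ``direct instantiation'' collapses.

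This is exactly why the paper's own proof is not a one-liner: it inducts on the rank of $X$ (not of $f$), factors $X \to 1$ through an $n$-display map $p\colon X \to A$ with $A$ a type, and uses the explicit pullback presentation of $PX$ in terms of $PA$ and $P_A(X)$. There the only weak equivalences invoked are $(1, rpf)\colon Y \to Y \times_A PA$ and its relatives, whose codomain data involve the genuine type $A$, so \reflemm{recurringpullback} legitimately applies; the transport map $\Gamma$ is then assembled from the inductively obtained transport $N$ over $P_A(X)$ and a lift $l$ over $PA$. To repair your proof you would have to either carry out this decomposition or first establish, by a separate induction, that the reflexivity maps $r\colon X \to PX$ of the constructed path objects are weak equivalences in the restricted sense --- neither of which is the routine bookkeeping your last paragraph suggests.
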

\begin{proof}
Without loss of generality we may assume that $X \to 1$ is an $n$-display map for some $n$. So we can prove the lemma by induction on the rank $n$ of $X$, with $n = 0$ being trivial.

So suppose $X$ has rank $n+1$. Then there is an $n$-display map $p: X \to A$ whose codomain $A$ is a type. It follows from the  previous proof that there is a transport map \[ M: X \times_A PA \to X \] with $p M = tp_2$ and $1 \simeq_A M(1, rp)$, which is used in the construction of $PX$ as the pullback:
\diag{ PX \ar[d] \ar[r] & P_A(X) \ar[d]^s \\
X \times_A PA \ar[r]_(.6)M & X. }
In addition, the induction hypothesis applied to $f$ in $\ct{C}(A)$ yields a map
\[ N: Y \times_X P_A(X) \to Y \]
with $f N = tp_2$.

From $1 \simeq_A M(1, rp)$ it follows that there is a homotopy \[ H: f \simeq_A M(1, rp)f = M (f \times_A 1) (1,rpf). \] Writing $h := N(1, H)$ this means that there is a commutative square of the form
\diag{ Y \ar[rr]^h \ar[d]_{(1, rpf)} & & Y \ar[d]^f \\
Y \times_A PA \ar[r]_(.48){f \times_A 1} & X \times_A PA \ar[r]_(.6)M & X. }
Since $(1, rpf): Y \to Y \times_A PA$ is a weak equivalence by \reflemm{recurringpullback}, the previous lemma yields a map $l: Y \times_A PA \to Y$ such that $fl = M(f \times_A 1)$.

We have to construct a map $\Gamma: Y \times_X PX \to Y$ with $f \Gamma = tp_2$, where  $Y \times_X PX$ is isomorphic to the pullback
\diag{ Y \times_A PA \times_X P_A(X) \ar[d]_{(q_1, q_2)} \ar[rr]^{q_3} & & P_A(X) \ar[d]^s \\
Y \times_A PA \ar[r]_(.45){f \times_A 1} & X \times_A PA \ar[r]_(.6)M & X. }
We put $\Gamma := N(l(q_1,q_2), q_3)$. This is well-defined, as \[ fl(q_1,q_2) = M(f \times_A 1)(q_1,q_2) = sq_3. \] Morover,
\[ f \Gamma = f N (l(q_1, q_2), q_3) =tp_2(l(q_1, q_2), q_3) = tq_3 = tp_2, \]
as desired.
\end{proof}

\begin{lemm}{acyclicfibrationsclosedundercompinsyntacticcat}
Contractible fibrations are closed under composition.
\end{lemm}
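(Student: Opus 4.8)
The plan is to prove that if $p \colon Y \to X$ and $q \colon X \to Z$ are contractible fibrations, then so is the composite $qp \colon Y \to Z$. Since contractible fibrations coincide with fibrations that are homotopy equivalences (this is \reflemm{acyclicfibrationsOK}, which is available in a path tribe and hence in our setting once \reflemm{syntacticathas1levelpathstr} has given us path structure), it is natural to try to deduce the statement from the fact that homotopy equivalences are closed under composition. But there is a subtlety: \reflemm{acyclicfibrationsOK} is stated for a path tribe, whereas in Section~6 we are working with the more elaborate graded structure $(\clubsuit)$ and have not yet verified all the path-tribe axioms. So first I would record that the results of Section~5 do apply here — we already know from \reflemm{syntacticathas1levelpathstr} and \reflemm{liftsinsyntacticcat} that $\ct{C}$ carries path structure satisfying requirements (i) and (ii) of \refdefi{pathstructure}, i.e. $\ct{C}$ is a path tribe — and hence every lemma proved in Section~5 about path tribes is at our disposal.

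Granting that, the argument is short. Suppose $p$ and $q$ are contractible fibrations. By \reflemm{acyclicfibrationsOK}, both are homotopy equivalences; by \reflemm{standardcomprcatisfibredtribe} the homotopy relation is a congruence, so homotopy equivalences are closed under composition, whence $qp$ is a homotopy equivalence. Moreover $qp$ is a fibration, since fibrations are closed under composition in any tribe. Applying \reflemm{acyclicfibrationsOK} once more, now in the reverse direction, we conclude that $qp$ is contractible. The same works relative to any base object $X$, since $\ct{C}(X)$ inherits all the relevant structure.

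There is essentially one place that requires care, and it is the place I expect to be the main obstacle: making sure that the chain of equivalences in \reflemm{acyclicfibrationsOK} is genuinely available in the Section~6 setting. The proof of \reflemm{acyclicfibrationsOK} uses \reflemm{basicfactsaboutleftmaps}, \reflemm{altcharcontractibility}, and the fact that the homotopy relation is preserved by precomposition but not (a priori) by postcomposition; all of these go through verbatim once we know $\ct{C}$ is a path tribe. The one genuinely new ingredient needed for the congruence property — that the homotopy relation is stable under postcomposition — was proved in \reflemm{standardcomprcatisfibredtribe} using only the left-map machinery and the existence of path objects, so it too transfers. Hence the obstacle is really just bookkeeping: citing \reftheo{pathtribeisfibcat} (or directly \reflemm{syntacticathas1levelpathstr} together with \reflemm{liftsinsyntacticcat}) to license the use of the Section~5 lemmas, after which the composition closure is immediate.

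A direct proof, avoiding any appeal to homotopy equivalences, is also possible and perhaps cleaner to state: using \reflemm{altcharcontractibility}, contractibility of $p$ gives a section $e \colon X \to Y$ of $p$ with a homotopy $1_Y \simeq_Z e p$ (after transporting the fibrewise-over-$X$ homotopy forward), and contractibility of $q$ gives a section $c \colon Z \to X$ of $q$ with $1_X \simeq cq$; then $ec \colon Z \to Y$ is a section of $qp$, and one checks $1_Y \simeq ecqp$ by combining the two homotopies — using that homotopy is a congruence (\reflemm{standardcomprcatisfibredtribe}) to postcompose the homotopy $1_X \simeq cq$ with $e$ and precompose with $p$, and then transitivity of $\simeq$. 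Either route works; I would present the first, since it is a two-line deduction from \reflemm{acyclicfibrationsOK} and \reflemm{standardcomprcatisfibredtribe}, and flag only the need to invoke that $\ct{C}$ is a path tribe to justify reusing those lemmas.
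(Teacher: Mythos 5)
Your argument is circular. You claim that \reflemm{syntacticathas1levelpathstr} and \reflemm{liftsinsyntacticcat} already establish that \ct{C} satisfies requirements (i) and (ii) of \refdefi{pathstructure} and is therefore a path tribe; in fact they give only the path structure and requirement (ii) (transport). Requirement (i) --- that every source map $s: PX \to X$ is contractible --- is \reflemm{sourcemapsareacyclicinsyntcat}, which is proved \emph{after} the present lemma and whose proof explicitly invokes it (``closed under composition by the previous lemma''). The Section~5 results you want to import genuinely depend on requirement (i): the congruence property \reflemm{standardcomprcatisfibredtribe}, which you need in order to conclude that homotopy equivalences compose (you must postcompose the homotopy $ff' \simeq 1$ with $g$), rests on \reflemm{rleft}, whose proof begins ``By assumption the map $s: PX \to X$ is contractible.'' Your fallback ``direct'' argument has the same defect, since it too postcomposes a homotopy with $e$ and cites \reflemm{standardcomprcatisfibredtribe} to license this. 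The appendix result \reflemm{homandpostcomp} does not rescue the step either: in the Section~6 reading of the appendix it applies only to maps between \emph{types}, i.e.\ rank-one objects, whereas the objects occurring here have arbitrary rank.

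The paper's own proof sidesteps all of this: it reduces to the case where the lower fibration is a display map with codomain $1$ and then applies \refprop{morehappy} from the appendix, which constructs sections of $Y \to 1$ and of $PY \to Y \times Y$ directly from the presentation of $PY$ as a pullback built out of $P_X(Y)$ and $PX$ in \reflemm{syntacticathas1levelpathstr}, using only sections of the two given contractible fibrations. If you want to keep your strategy, you would first have to establish requirement (i) without appealing to closure of contractible fibrations under composition --- which is essentially what \refprop{morehappy} accomplishes, so the detour through homotopy equivalences buys nothing here.
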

\begin{proof}
Suppose $q: Y \to X$ and $p: X  \to I$ are contractible fibrations. We want to show that $pq$ is contractible as well; for that it suffices to consider the case where $p$ is a display map and $I = 1$. But in that case the result follows from the construction of path objects in \reflemm{syntacticathas1levelpathstr} above and \refprop{morehappy} from the appendix.
\end{proof}

\begin{lemm}{sourcemapsareacyclicinsyntcat}
Every source map $s: PY \to Y$ is contractible.
\end{lemm}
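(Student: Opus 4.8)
The plan is to prove the statement by induction on the rank $n$ of the object $Y$, leaning on the explicit construction of the path object $PY$ carried out in the proof of \reflemm{syntacticathas1levelpathstr}. The base case $n = 0$ is trivial, since then $Y = 1$, $PY = 1$ and $s$ is an identity, which is contractible.

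The case $n = 1$ is where the real work happens, and it runs essentially as the proof of property (i) in \refprop{fromidtopathstructure}. Here $Y \to 1$ is a display map, so $r \colon Y \to PY$ is a weak equivalence; since $sr = 1$, \reflemm{altcharcontractibility} tells us it is enough to produce a map $H \colon PY \to P_Y(PY)$, where $PY$ is regarded as an object of $\ct{C}(Y)$ via $s$, with $(s, t)H = (1, rs)$. Such an $H$ is provided by \reflemm{syntacticathas1levelpathstr}, applied to the commuting square having $r$ on the left, $rr \colon Y \to P_Y(PY)$ on top, the fibration $(s, t) \colon P_Y(PY) \to PY \times_Y PY$ on the right and $(1, rs)$ on the bottom: the lower filler the lemma furnishes is precisely the $H$ we want. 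The one point needing care is that $P_Y(PY) \to PY \times_Y PY$ is in general not a display map but a fibration of higher rank, so $(\clubsuit)$ cannot be invoked directly and one genuinely needs the sharper \reflemm{syntacticathas1levelpathstr}, whose lifting hypothesis asks only for a fibration on the right.

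For the inductive step, $n = m + 1$ with $m \geq 1$, I would decompose $Y \to 1$ as an $m$-display map $p \colon Y \to X$ to a type $X$ followed by the display map $X \to 1$, exactly as in the proof of \reflemm{syntacticathas1levelpathstr}, and recall from that proof the ``second presentation'' of $PY$: up to isomorphism, $PY$ is the pullback of $s \colon P_X(Y) \to Y$ along the transport map $\Gamma \colon Y \times_X PX \to Y$, and the source map $s \colon PY \to Y$ factors as this pullback followed by the projection $\mathrm{pr} \colon Y \times_X PX \to Y$. Now $\mathrm{pr}$ is the pullback of $s \colon PX \to X$ along $p$, and $s \colon PX \to X$ is contractible by the induction hypothesis (since $X$ has rank $1 < n$), hence so is $\mathrm{pr}$ by \reflemm{contractiblemaps}; and, viewing $Y$ as an object of rank $m < n$ in the tribe $\ct{C}(X)$, which inherits the whole stratified structure of this section, the induction hypothesis gives that $s \colon P_X(Y) \to Y$ is contractible, so its pullback along $\Gamma$ is too, again by \reflemm{contractiblemaps}. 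Since $s \colon PY \to Y$ is the composite of these two contractible fibrations, \reflemm{acyclicfibrationsclosedundercompinsyntacticcat} finishes the induction.

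I expect the principal obstacle to be organisational rather than substantive: one must commit to the concrete ``second presentation'' of $PY$ in order to see its source map as a composite of pullbacks of lower-rank source maps, and one must phrase the induction uniformly over all tribes with the structure of this section so that it may be reapplied inside $\ct{C}(X)$. It is also worth flagging that the case $n = 1$ truly has to be treated on its own: in the decomposition used for the inductive step, taking $m = 0$ would force $X = Y$ and make $\mathrm{pr}$ equal to $s \colon PY \to Y$ itself, so the argument would become circular there.
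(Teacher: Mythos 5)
Your proof is correct and follows essentially the same route as the paper's: induction on rank, the second presentation of $PY$ as a composite of pullbacks of lower-rank source maps, closure of contractible fibrations under pullback (\reflemm{contractiblemaps}) and composition (\reflemm{acyclicfibrationsclosedundercompinsyntacticcat}), and the lifting property of \reflemm{syntacticathas1levelpathstr} together with \reflemm{altcharcontractibility} for the type-level source map. The only difference is organisational: the paper proves contractibility of $s\colon PX \to X$ directly inside the inductive step (by the very argument you isolate as the base case $n=1$) rather than obtaining it from the induction hypothesis, which is why it needs no restriction $m \geq 1$.
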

\begin{proof}
We prove this by induction on the rank $n$ of $Y$, with the case $n = 0$ being trivial.

If $Y$ has rank $n + 1$, then there exists an $n$-display map $p: Y \to X$ to a type $X$. From the construction of $PY$ in \reflemm{syntacticathas1levelpathstr} we get that the source map on $Y$ is the arrow $p_1(q_1,q_2)$ down the middle in
\diag{ & PY \ar[d]_{(q_1, q_2)} \ar[r]^{q_3} & P_{X}(Y) \ar[d]^s \\
PX \ar[d]_s & Y \times_{X} PX \ar[r]_\nabla \ar[l]_(.6){p_2} \ar[d]^{p_1} & Y  \\
X & Y \ar[l]^p }
Since both squares in this diagram are pullbacks and contractible fibrations are stable under pullback by \reflemm{contractiblemaps} and closed under composition by the previous lemma, this arrow down the middle is contractible as soon as $s: PX \to X$ is contractible (the map $s: P_X(Y) \to Y$ being contractible by induction hypothesis). However, for $r: X \to PX$ we have $sr = 1$ and from \reflemm{syntacticathas1levelpathstr} it follows that the diagram
\diag{  X \ar[d]_r \ar[r]^(.4){rr} &   P_X(PX) \ar[d]^{(s, t)} \\
PX \ar[r]_(.35){(1, rs)}  & PX \times_X PX }
has a lower filler. Therefore $s: PX \to X$ is contractible by \reflemm{altcharcontractibility}.
\end{proof}

We conclude:
\begin{theo}{classcatofgentypetheory}
The classifying category of any type theory with propositional identity types carries the structure of a path category.
\end{theo}

\appendix

\section{Technical results} In this appendix we collect some technical results that were needed at various points in the main text; often the point is that we are able to prove standard results from homotopy theory in a very weak context, weaker even than that of a path category. In order to do this somewhat systematically, we have decided to derive them in a uniform setting.

This setting is that we are given a tribe \ct{C}. In addition, we are given two classes of maps, called \emph{display maps} and \emph{weak equivalences}, respectively. If $A \to 1$ is a display map, we call $A$ a \emph{type}. We will make the following assumptions:
\begin{enumerate}
\item Display maps are fibrations (but the converse need not hold).
\item For any map $m: C \to A$ and display map $f: B \to A$ there is a pullback square
\diag{D \ar[r]^n \ar[d]_g & B \ar[d]^f \\
C \ar[r]_m & A }
in which $g$ is a display map as well.
\item Any pullback of a weak equivalence along a fibration is again a weak equivalence.
\item If $f: B \to A$ is a display map, then the fibrewise diagonal $B \to B \times_A B$ factors as a weak equivalence $r: B \to P_A(B)$ followed by a display map $(s, t): P_A(B) \to B \times_A B$. (We will refer to $P_A(B)$ together with $r,s,t$ as a \emph{path object} for $f$.)
\item If
\diag{ D \ar[d]_w \ar[r]^n & B \ar[d]^p \\
C \ar[r]_m & A }
is a commutative square in which $w$ is a weak equivalence and $p$ is a display map, then there are maps $d: C \to B$ and $H: D \to P_A(B)$ such that $pd = m, sH = dw, tH = n$.
\end{enumerate}
Note that if \ct{C} is a tribe with this structure, then so is any $\ct{C}(X)$.

We do not believe that this setting is so interesting in itself, but, as said, by organising matters in this way we are able to derive the results we need in a uniform and systematic way. 

\subsection{Groupoid structure} Here we show that types carry a groupoid structure ``up to homotopy''.

\begin{lemm}{recurringpullback} If $f: Y \to X$ is a fibration whose codomain is a type and $Y \times_X PX$ is the pullback
\diag{ Y \times_X PX \ar[d]_{p_1} \ar[r]^(.6){p_2} & PX \ar[d]^s \\
Y \ar[r]_f & X ,}
then \[ (1, rf): Y \to Y \times_X PX \] is a weak equivalence.
\end{lemm}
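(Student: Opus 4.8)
The plan is to exhibit a homotopy inverse for $(1,rf)$ together with the two required homotopies, so that $(1,rf)$ is a homotopy equivalence; but since we are working only in the appendix setting (where ``homotopy equivalence'' is not yet known to be a weak equivalence), I will instead use the hypotheses directly to produce the weak equivalence witness. The obvious candidate for a retraction is the projection $p_1 : Y \times_X PX \to Y$; note $p_1 \circ (1,rf) = 1_Y$, so one composite is already the identity on the nose. The task therefore reduces to showing that $p_1$ is a weak equivalence, for then $(1,rf)$, being a section of the weak equivalence $p_1$ with $p_1(1,rf)=1$, will itself be a weak equivalence once we know weak equivalences satisfy enough 2-out-of-3 or (better) we recognise $p_1$ directly as a pullback of a weak equivalence.

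Indeed, the cleanest route is to observe that the square defining $Y \times_X PX$ exhibits $p_1$ as the pullback of $s : PX \to X$ along the fibration $f : Y \to X$. Now $s : PX \to X$ is a weak equivalence: in the path object factorisation $\Delta_X = (s,t)\circ r$ with $r : X \to PX$ a weak equivalence and $(s,t) : PX \to X\times X$ a display map, we have $s \circ r = 1_X$, and $s$ is a display map (it is the composite of the display map $(s,t)$ with the first projection $X \times X \to X$, which is a display map by axiom (2) — or one argues via axiom (3) that $r$ being a weak equivalence pulled back appropriately forces $s$ to be one; more simply, $r$ is a section of the fibration $s$ and $r$ is a weak equivalence, and by the 2-out-of-3-type consequences available here $s$ is a weak equivalence). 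Then by assumption (3), any pullback of the weak equivalence $s$ along the fibration $f$ is again a weak equivalence, so $p_1$ is a weak equivalence.

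Finally, to pass from ``$p_1$ is a weak equivalence'' to ``$(1,rf)$ is a weak equivalence'', I apply assumption (5) to the commuting square with $p_1$ (a weak equivalence) on the left, $f$ (a display map) on the right — wait, that is not quite the shape needed; instead the honest argument is that $(1,rf)$ is a section of the weak equivalence $p_1$, i.e. there is a weak equivalence $w = p_1$ with $w \circ (1,rf) = 1$, and one invokes the standard fact (derivable from (5), as in the proof that retracts of weak equivalences behave well, or more directly: a section of an acyclic-type map with the relevant lifting) that such a section is itself a weak equivalence. The main obstacle is precisely this last step — making ``section of a weak equivalence is a weak equivalence'' rigorous in the weak appendix setting, where we do not have a model-category-style 2-out-of-3 axiom for weak equivalences outright; I expect this to be handled by a lifting argument using (5) against the path object $(s,t) : P_X(Y) \to Y \times_X Y$, producing the missing homotopy, exactly as contractibility of source maps is bootstrapped elsewhere in the paper.
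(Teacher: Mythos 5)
Your proposal has a genuine gap, and it also misses a much shorter route. The gap: your argument hinges on $s : PX \to X$ being a weak equivalence and on the principle that a section of a weak equivalence is a weak equivalence. Neither is available in the appendix setting. The only axioms governing weak equivalences there are (3) (stability under pullback along fibrations), (4) (existence of path-object factorisations) and (5) (the weak lifting property); there is no 2-out-of-3 or 2-out-of-6 axiom, and $s$ is nowhere assumed to be a weak equivalence. Indeed, in the application in Section~6 the weak equivalences are \emph{defined} to be the maps $r : X \to P_I X$ and their pullbacks along fibrations, a class not known at that stage to contain $s$ or to be closed under the retract/section arguments you invoke; the contractibility of $s$ is only established later and its proof depends on this very lemma. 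So the step you flag as ``the main obstacle'' is not a loose end to be patched by a lifting argument --- it is the whole content, and your route does not close it.

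The intended argument is more direct and avoids all of this. Paste the defining pullback square of $Y \times_X PX$ underneath the square with $f$ on top, $r : X \to PX$ on the right, $(1,rf)$ on the left and $p_2$ on the bottom. The outer rectangle is the identity pullback of $f$ along $sr = 1_X$, and the lower square is a pullback, so by pullback pasting the upper square is a pullback as well: $(1,rf)$ is the pullback of the weak equivalence $r$ along $p_2$. Since $p_2$ is a fibration (being the pullback of the fibration $f$ along $s$), assumption (3) applies immediately and $(1,rf)$ is a weak equivalence. Your observation that $p_1(1,rf) = 1_Y$ is correct but plays no role in the actual proof.
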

\begin{proof}
This is because
\diag{ Y \ar[d]_{(1, rf)} \ar[r]^f & X \ar[d]^r \\
Y \times_X PX \ar[r]_(.6){p_2} \ar[d]_{p_1} & PX \ar[d]^s \\
Y \ar[r]_f & X }
consists of pullbacks in which $f$ and $p_2$ are fibrations and $r$ is a weak equivalence.
\end{proof}

\begin{lemm}{homiseqrel}
If $A$ is a type, then $(s, t): PA \to A \times A$ is an equivalence relation.
\end{lemm}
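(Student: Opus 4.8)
The plan is to verify, directly from the axioms (1--5) listed at the start of the appendix, that the factorisation $r\colon A\to PA$, $(s,t)\colon PA\to A\times A$ of the diagonal supplies the reflexivity, symmetry and transitivity witnesses required by \refdefi{eqrelation} (here $I = 1$, so $A\times_I A = A\times A$ and the fibration $x\colon A\to I$ is just $A\to 1$). Reflexivity is immediate: the map $r\colon A\to PA$ already satisfies $(s,t)r = \Delta_A$ by the defining property of a path object in axiom (4). The substance of the lemma is to produce the symmetry map $\sigma\colon PA\to PA$ and the transitivity map $\tau\colon PA\times_A PA\to PA$, and for this the tool is axiom (5): given a commuting square with a weak equivalence on the left and a display map on the right, one gets a lower filler (and even a homotopy making the upper triangle commute).

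For \textbf{symmetry}, I would consider the square whose left edge is the weak equivalence $r\colon A\to PA$ and whose right edge is the display map $(s,t)\colon PA\to A\times A$, with bottom edge $(t,s)\colon PA\to A\times A$ (the ``swap'' of $(s,t)$); the square commutes because $(t,s)r = (s,t)r = \Delta_A$. Axiom (5) then yields a map $\sigma\colon PA\to PA$ with $(s,t)\sigma = (t,s) = (p_2,p_1)$ in the notation of \refdefi{eqrelation}, which is exactly what is wanted. For \textbf{transitivity}, form the pullback $PA\times_A PA$ along $t$ on the left factor and $s$ on the right factor, as in the diagram in \refdefi{eqrelation}. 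The composite $r\colon A\to PA\to PA\times_A PA$ (using $(r,r)$, which is well-defined since $t r = 1 = s r$) is a weak equivalence --- this is where \reflemm{recurringpullback} does the work, since $PA\times_A PA\to PA$ is a pullback of the fibration $s$ along a map into a type, so $(1,r\cdot s)$-style sections are weak equivalences, and one composes such pullback squares to see $(r,r)$ is a weak equivalence. Then apply axiom (5) to the square with left edge this weak equivalence $A\to PA\times_A PA$, right edge the display map $(s,t)\colon PA\to A\times A$, and bottom edge $(s\pi_1, t\pi_2)\colon PA\times_A PA\to A\times A$; the square commutes because precomposing the bottom edge with $(r,r)$ gives $(s,t)r = \Delta_A$. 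The filler is the required $\tau$ with $(s,t)\tau = (s\pi_1, t\pi_2) = (p_1\pi_1, p_2\pi_2)$.

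The \textbf{main obstacle} is not the existence of the fillers but checking that the relevant maps into $PA$ and $PA\times_A PA$ really are weak equivalences, so that axiom (5) is applicable. The clean way to handle this uniformly is to observe that $A$ is a type, so $s\colon PA\to A$ and (pulled back) the projections out of $PA\times_A PA$ have codomains that are types or at least sit over types appropriately, and then invoke \reflemm{recurringpullback} together with closure of weak equivalences under pullback along fibrations (axiom (3)) and the two-pullback pasting argument used in the proof of \reflemm{recurringpullback}. Once one has that $r\colon A\to PA$ and $(r,r)\colon A\to PA\times_A PA$ are weak equivalences, the three applications of axiom (5) produce $\rho = r$, $\sigma$ and $\tau$ with precisely the equations demanded by \refdefi{eqrelation}, and the proof is complete.
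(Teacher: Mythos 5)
Your reflexivity and symmetry arguments coincide with the paper's: $r$ itself witnesses reflexivity, and $\sigma$ is obtained as a lower filler of the square with the weak equivalence $r$ on the left, the display map $(s,t)$ on the right and $(t,s)$ along the bottom. The transitivity step, however, contains a genuine gap. You obtain $\tau$ by lifting against the map $(r,r)\colon A \to PA\times_A PA$, and you justify that this map is a weak equivalence by writing it as the composite of $r\colon A\to PA$ with a map $PA\to PA\times_A PA$ and then ``composing pullback squares''. But the appendix's axioms (1)--(5) do not assume that weak equivalences are closed under composition, and the class of weak equivalences to which these axioms are actually applied in Section 6 (namely the maps $r\colon X\to P_IX$ and their pullbacks along fibrations) is not obviously closed under composition; 2-out-of-6 is only established much later, and only for path tribes. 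Pasting pullback squares shows that a pullback of a composite is a composite of pullbacks, but it does not exhibit $(r,r)$ as a pullback of a single generating weak equivalence along a fibration, so axiom (5) cannot legitimately be applied to your square as it stands.

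The paper avoids this by choosing a different left edge: it lifts against $(1,rt)\colon PA\to PA\times_A PA$, with the identity on $PA$ along the top and $(s\pi_1,t\pi_2)$ along the bottom. That map is a weak equivalence directly by \reflemm{recurringpullback} applied to the fibration $t\colon PA\to A$, since it is the pullback of $r\colon A\to PA$ along a fibration; no closure property is needed. Your argument can be repaired by substituting this lift for yours --- or, equivalently, by performing two successive lifts, first against $r$ and then against $(1,rt)$, which collapses to the paper's argument --- but as written the key claim that $(r,r)$ is a weak equivalence is unjustified in this deliberately weak axiomatic setting.
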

\begin{proof}
We have $r: A \to PA$ for reflexivity. To witness symmetry, note that
\diag{ A \ar[r]^r \ar[d]_r & PA \ar[d]^{(s, t)} \\
PA \ar[r]_(.45){(t, s)} & A \times A }
is a commuting square with a weak equivalence on the left and a display map on the right. Therefore we have a map $\sigma: PA \to PA$ such that $(s, t)\sigma = (t, s)$. In addition, the previous lemma together with the commutativity of
\diag{ PA \ar[rr]^1 \ar[d]_{(1, rt)} & & PA \ar[d]^{(s, t)} \\
PA \times_A PA \ar[rr]_{(sp_1, tp_2)} & & A \times A, }
gives us a map $\mu$ such that $(s\pi_1,t\pi_2) = (s, t)\mu$.
\end{proof}

It follows from the previous lemma that if $B$ is arbitrary and $A$ is a type, then the homset ${\rm Hom}(B, A)$ carries an equivalence relation: indeed, two such parallel maps $f, g: B \to A$ will be equivalent is there is a map $H: B \to PA$ such that such $sH = f$ and $tH = g$. In this case we call $f$ and $g$ \emph{homotopic} and $H$ a \emph{homotopy} and we write $f \simeq g$, or $H: f \simeq g$ if we wish to stress the homotopy.

More generally, if $p: A \to X$ is a display map and $k: B \to X$ is arbitrary, then the set
\[ \{ f: B \to A \, : \, pf = k \} \]
carries an equivalence relation as well. Indeed, two such maps $f, g: B \to A$ will be equivalent in case there is a map $H: B \to P_X(A)$ such that $sH = f$ and $tH = g$.  In this case $f$ and $g$ are \emph{fibrewise homotopic} and $H$ is a \emph{fibrewise homotopy} and we will write $f \simeq_X g$ or $H: f \simeq_X g$.

Clearly, the homotopy relation is preserved by precomposition. We also have that it is preserved by postcomposition in the following sense:

\begin{lemm}{homandpostcomp}
Suppose $f, g: C \to B$ are parallel maps and $h: B \to A$ is a map between types. Then $f \simeq g$ implies $hg \simeq hf$.
\end{lemm}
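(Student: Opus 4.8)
The plan is to reduce the claim to the construction of an induced map on path objects. Since $h$ is a map between types, both $B$ and $A$ are types, so the path objects $PB = P_1(B)$ and $PA = P_1(A)$ are available, and by assumption (4) the reflexivity maps $r\colon B \to PB$ and $r\colon A \to PA$ are weak equivalences satisfying $(s,t)\circ r = \Delta$. I claim there is a map $Ph\colon PB \to PA$ with $(s,t)\circ Ph = (hs,ht)$; composing such a map with the given homotopy then immediately yields the result.

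To obtain $Ph$, I would consider the square
\diag{ B \ar[r]^{rh} \ar[d]_r & PA \ar[d]^{(s, t)} \\
PB \ar[r]_{(h \times h)(s, t)} & A \times A, }
which commutes because $(s,t)\circ rh = \Delta_A\circ h = (h,h) = (h\times h)\circ\Delta_B = (h\times h)\circ(s,t)\circ r$. The left-hand map is a weak equivalence and the right-hand map is a display map, so assumption (5) supplies a lower filler $Ph\colon PB \to PA$ with $(s,t)\circ Ph = (h\times h)\circ(s,t) = (hs,ht)$; the extra homotopy produced by (5) will not be needed.

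Finally, given a homotopy $H\colon C \to PB$ with $sH = f$ and $tH = g$, I would set $K := Ph\circ H\colon C \to PA$. Then $sK = hsH = hf$ and $tK = htH = hg$, so $K$ witnesses $hf \simeq hg$, and since the homotopy relation on the type $A$ is symmetric by \reflemm{homiseqrel} we conclude $hg \simeq hf$. The argument is routine; the only point worth flagging is that the map on path objects must be extracted from the lifting property (5), since no functorial choice of path objects is assumed. (The same idea, carried out one level higher, recurs in \reflemm{standardcomprcatisfibredtribe} in the main text.) So there is no genuine obstacle here, only the mild care of checking that the displayed square commutes and that $B$ is indeed a type, so that $PB$ exists in the first place.
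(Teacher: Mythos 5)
Your proof is correct and follows essentially the same route as the paper: both obtain the induced map $PB \to PA$ as a lower filler of the same commuting square (weak equivalence $r$ on the left, display map $(s,t)$ on the right, via assumption (5)) and then postcompose the given homotopy. The only cosmetic difference is that you invoke symmetry at the end to match the ordering $hg \simeq hf$ in the statement, which the paper leaves implicit.
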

\begin{proof}
The square
\diag{ B \ar[d]_r \ar[r]^h & A \ar[r]^r & PA \ar[d]^{(s, t)} \\
PB \ar[r]_(.45){(s, t)} & B \times B \ar[r]_{h \times h} & A \times A }
commutes, so we obtain a map $K: PB \to PA$ such that $(s, t)K = (h \times h)(s, t)$. So if $H: C \to PB$ is such that $(s, t)H = (f, g)$, then
\[ (s, t)KH = (h \times h)(s, t)H = (hf, hg). \]
\end{proof}

In addition, we have the following two lemmas:

\begin{lemm}{sueful2}
Suppose $x: X \to I$ and $z: Z \to I$ are display maps, and $f, g: Y \to X$ and $h: Y \to Z$ are maps such that $zh = xf = xg$. If $f \simeq_I g$, then $(f, h) \simeq_I (g, h)$.
\end{lemm}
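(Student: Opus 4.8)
The plan is to transport the given fibrewise homotopy $f \simeq_I g$ through the extra fibred factor $Z$ by a single application of the weak lifting property, axiom (5), rather than building a new path object by hand. Write $W := X \times_I Z$; since $W \to I$ is again a display map, axiom (4) furnishes a fibrewise path object $r_W : W \to P_I(W)$, $(s, t) : P_I(W) \to W \times_I W$ over $I$. Choose a fibrewise homotopy $H : Y \to P_I(X)$ with $sH = f$ and $tH = g$. The idea is that $H$ already provides the path in the $X$-direction while the $Z$-direction is to be held constant, so I want to produce from $H$ a map $Y \to P_I(W)$ with source $(f,h)$ and target $(g,h)$.

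The key observation is a stability fact. Write $p_1 : P_I(X) \times_I Z \to P_I(X)$ and $p_2 : P_I(X) \times_I Z \to Z$ for the two projections. Then $p_1$ is the pullback of $z : Z \to I$ along the structure map of $P_I(X)$, hence a fibration, and $r \times_I 1_Z : X \times_I Z \to P_I(X) \times_I Z$ is exactly the pullback of the weak equivalence $r : X \to P_I(X)$ along $p_1$, hence a weak equivalence by axiom (3). Consider now the square
\diag{ X \times_I Z \ar[d]_{r \times_I 1_Z} \ar[r]^{r_W} & P_I(W) \ar[d]^{(s, t)} \\
P_I(X) \times_I Z \ar[r]_\psi & W \times_I W, }
in which $\psi : P_I(X) \times_I Z \to W \times_I W$ has components $(s p_1, p_2)$ and $(t p_1, p_2)$. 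This square commutes, both legs being equal to the fibrewise diagonal $\Delta_W : W \to W \times_I W$: the upper-right leg by axiom (4), and the lower-left leg because $\psi \circ (r \times_I 1_Z) = (1_W, 1_W)$, using $sr = tr = 1_X$. Since the left edge is a weak equivalence and the right edge a display map, axiom (5) produces a map $\Phi : P_I(X) \times_I Z \to P_I(W)$ with $(s, t)\Phi = \psi$; in particular $s\Phi = (s p_1, p_2)$ and $t\Phi = (t p_1, p_2)$.

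Finally I would set $K := \Phi \circ (H, h) : Y \to P_I(W)$, where $(H, h) : Y \to P_I(X) \times_I Z$ is the evident pairing, well-defined because $H$ and $h$ lie over the common map $xf = zh : Y \to I$. Then $sK = (sH, h) = (f, h)$ and $tK = (tH, h) = (g, h)$, so $K$ is a fibrewise homotopy witnessing $(f, h) \simeq_I (g, h)$, as required. I do not expect a genuine obstacle: the argument rests only on axioms (3) and (5), the rest being routine bookkeeping with pullbacks over $I$. The one step that repays a little care is the identification of $r \times_I 1_Z$ as a pullback of $r$ along a fibration, which is what licenses the appeal to axiom (3), since weak equivalences are assumed stable under pullback only along fibrations and not along arbitrary maps.
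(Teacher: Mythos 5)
Your proof is correct and is essentially the paper's own argument: the paper likewise recognises $r \times_I 1_Z$ as the pullback of the weak equivalence $r$ along the fibration $P_I X \times_I Z \to P_I X$, applies the lifting axiom against the display map $(s,t): P_I(X \times_I Z) \to (X \times_I Z) \times_I (X \times_I Z)$ to obtain your map $\Phi$ (called $K$ there), and concludes with the composite $\Phi(H,h)$. The only difference is that you spell out the commutativity check and the pullback identification slightly more explicitly.
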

\begin{proof}
In the diagram
\diag{ X \ar[d]_r & X \times_I Z \ar[l] \ar[d]_{r \times_I 1} \ar[rr]^r & & P_I(X \times_I Z) \ar[d]^{(s, t)} \\
P_IX & P_I X \times_I Z \ar[rr]_(.4){(s \times_I 1, t \times_I 1)} \ar[l] & & (X \times_I Z) \times_I (X \times_I Z) }
the left square is a pullback with a fibration at the bottom, so there is a map $K: P_IX \times_I Z \to P_I(X \times_I Z)$ such that $(s, t)K = (s \times_I 1, t \times_I 1)$. So if $H: Y \to P_IX$ is such that $(s, t)H = (f, g)$, then \[ (s, t)K(H, h) = (s \times_I 1, t \times_I 1)(H, h) = ((sH, h), (tH, h)) = ((f, h), (g, h)). \]
Hence $K(H, h): Y \to P_I(X \times_I Z)$ is a homotopy showing $(f, h) \simeq_I (g, h)$.
\end{proof}

\begin{lemm}{sueful}
If $fh \simeq gh$ and $h$ is a weak equivalence, then $f \simeq g$.
\end{lemm}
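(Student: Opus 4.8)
The plan is to exploit the fact that a weak equivalence $h$ into a display map is a left map in the relevant sense: by axiom (5) of the appendix, $h$ lifts against display maps, and the path object $(s,t)\colon PA \to A\times A$ for a type $A$ is precisely a display map. So I would set up a square with $h$ on the left and $(s,t)$ on the right whose diagonal filler is the homotopy we want. Concretely, write $f,g\colon C \to A$ with $A$ a type, and let $h\colon D \to C$ be the weak equivalence together with a homotopy $H\colon D \to PA$ witnessing $fh \simeq gh$, so $sH = fh$ and $tH = gh$. The natural square to consider is
\[
\diag{ D \ar[d]_h \ar[r]^H & PA \ar[d]^{(s, t)} \\
C \ar[r]_(.4){(f, g)} & A \times A, }
\]
which commutes since $(s,t)H = (fh, gh) = (f,g)h$.

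First I would note that this square has a weak equivalence $h$ on the left and a display map $(s,t)$ on the right, so axiom (5) (equivalently \reflemm{recurringpullback}-style reasoning, or directly the lifting hypothesis) applies. Axiom (5) as stated gives a map $d\colon C \to PA$ with $(s,t)d = (f,g)$ together with a homotopy-witness $K$ satisfying $sK = dh$ and $tK = H$; but actually the only part I need is $d\colon C \to PA$ with $(s,t)d = (f,g)$. From $sd = f$ and $td = g$ the map $d$ is exactly a homotopy $d\colon f \simeq g$, which is the conclusion. So the proof is essentially one application of the lifting axiom to a carefully chosen square.

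The one point requiring a little care is that axiom (5) is phrased for display maps on the right and a weak equivalence on the left, so I should make sure $(s,t)\colon PA \to A\times A$ genuinely is a display map (this is axiom (4)) and that the common codomain issue is handled — here everything lives over $1$, so there is no fibrewise subtlety and the plain homotopy relation is what is at stake. If one prefers to work relative to a base, the same argument goes through verbatim with $P_X(A)$ in place of $PA$ and $\simeq_X$ in place of $\simeq$, using that $(s,t)\colon P_X(A) \to A\times_X A$ is a display map. I do not expect any real obstacle: the content is entirely in choosing the square $\big((f,g), H\big)$ and invoking the lifting property, and the verification $(s,t)H = (f,g)h$ is immediate from the definitions of $H$ and of the homotopy relation.
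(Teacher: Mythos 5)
Your proposal is correct and is essentially identical to the paper's own proof: both form the square with $h$ on the left, the homotopy $H$ on top, $(s,t)\colon PA \to A\times A$ on the right and $(f,g)$ on the bottom, and take the lower filler supplied by axiom (5) as the desired homotopy $f\simeq g$. Your remark that the fibrewise case goes through verbatim also matches the paper's comment immediately following the lemma.
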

\begin{proof}
Suppose $f, g: Y \to X$ are two parallel maps, $h: Z \to Y$ is a weak equivalence and $H: Z \to PX$ is a homotopy with $(s, t)H = (fh, gh)$. Then
\diag{ Z \ar[d]_h \ar[r]^H & PX \ar[d]^{(s, t)} \\
Y \ar[r]_(.4){(f, g)} & X \times X }
commutes and a  lower filler in this diagram is a homotopy showing that $f \simeq g$.
\end{proof}

Clearly, a similar statement as in the previous lemma holds for the notion of fibrewise homotopy.

With these results in place, let us return to \reflemm{homiseqrel}. The proof of this lemma actually yields more than that $(s, t): PA \to A \times A$ is an equivalence relation. Indeed, it also tells us that $\sigma r \simeq_{A \times A} r$ and $\mu(1, rt) \simeq_{A \times A} 1$. Note that from the later one can derive that
\[ \mu(r, r) \simeq_{A \times A} \mu(1, rt)r \simeq_{A \times A} r. \]
This already goes some way towards proving:
\begin{prop}{fibrantgroupoid}
Any type carries a groupoid structure up to homotopy. More precisely, if $A$ is a type then we have that:
\begin{enumerate}
\item $\mu(p_1,\mu(p_2,p_3)) \simeq_{A \times A} \mu(\mu(p_1,p_2),p_3): PA \times_A PA \times_A PA \to PA$.
\item $\mu(1, rt) \simeq_{A \times A} 1: PA \to PA$.
\item $\mu(rs,1) \simeq_{A \times A} 1: PA \to PA$.
\item $\mu(1,\sigma) \simeq_{A \times A} rs: PA \to PA$.
\item $\mu(\sigma, 1) \simeq_{A \times A} rt: PA \to PA$.
\end{enumerate}
\end{prop}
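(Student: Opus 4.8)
The plan is to prove all five items by exhibiting, in each case, an explicit homotopy between the two composites, obtained by applying the lifting axiom (5) from the appendix to a suitable commuting square built from $r \colon A \to PA$, $\sigma$, $\mu$, and the structure maps $(s,t)$. The key observation, already used in the proof of \reflemm{homiseqrel}, is that to show $u \simeq_{A \times A} v$ for two maps into $PA$ over $A \times A$, it suffices to find a weak equivalence $w$ and a commuting square with $w$ on the left and $(s,t) \colon P_{A \times A}(PA) \to PA \times_{A\times A} PA$ on the right whose lower filler gives the desired fibrewise homotopy; and by \reflemm{sueful} (the fibrewise variant noted right after it) it is enough to precompose everything with a weak equivalence of our choosing. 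The natural weak equivalence to use throughout is $r \colon A \to PA$, since the composites in question, when precomposed with $r$, collapse (e.g. $sr = tr = 1$, so $\mu(1,rt)r \simeq_{A\times A} r$ and similar identities were already recorded just before the statement).

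Concretely, I would proceed as follows. First, item (2) is essentially already done: the remark preceding the proposition records $\mu(1,rt) \simeq_{A\times A} 1$, which came directly from the construction of $\mu$ in \reflemm{homiseqrel}. For item (3), $\mu(rs,1) \simeq_{A\times A} 1$, I would set up the analogous square: $\mu(rs,1)$ and $1$ agree after precomposition with $r$ (both become $r$, using $sr=1$), so by \reflemm{sueful} it suffices to produce a fibrewise homotopy after precomposing with the weak equivalence $r$, which reduces to filling a square with $r$ on the left; apply axiom (5). For items (4) and (5), $\mu(1,\sigma) \simeq_{A\times A} rs$ and $\mu(\sigma,1) \simeq_{A\times A} rt$: precomposing with $r$ sends $\sigma r \simeq_{A\times A} r$ (recorded in the remark), $\mu(r,r) \simeq_{A\times A} r$, $rsr = r$, and $rtr = r$, so again both sides agree up to homotopy after precomposition with the weak equivalence $r$, and \reflemm{sueful} plus axiom (5) finish it — one has to be slightly careful to chain the homotopies $\mu(1,\sigma)r \simeq_{A\times A} \mu(1,\sigma r) \simeq_{A\times A} \mu(1,r) \simeq_{A\times A} r$ using that $\mu$ respects homotopy in each argument (which follows from \reflemm{homandpostcomp} together with \reflemm{sueful2}). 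Finally, for associativity (item (1)): both $\mu(p_1,\mu(p_2,p_3))$ and $\mu(\mu(p_1,p_2),p_3)$, precomposed with the diagonal weak equivalence $(rr,rr,\ldots)$-type map $A \to PA\times_A PA\times_A PA$ built from three copies of $r$, collapse to $r$; the domain map here is a weak equivalence by iterated application of \reflemm{recurringpullback}, so \reflemm{sueful} reduces associativity to a single lifting problem solved by axiom (5).

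The main obstacle is item (1), associativity. The difficulty is not in the final lifting step but in verifying that the relevant map $A \to PA \times_A PA \times_A PA$ (the "triple reflexivity" map) is indeed a weak equivalence so that \reflemm{sueful} applies; this requires unwinding the iterated pullback defining $PA \times_A PA \times_A PA$ and applying \reflemm{recurringpullback} repeatedly, each time checking the codomain-is-a-type and fibration hypotheses. A secondary technical point, recurring in (4) and (5), is that $\mu$ is only associative/unital up to homotopy, so one cannot manipulate expressions like $\mu(1,\sigma r)$ by substitution alone — one must pass through \reflemm{sueful2} to replace an argument of $\mu$ by a homotopic one and then compose the resulting homotopies, which means keeping track of a short zig-zag of fibrewise homotopies rather than a single equation. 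Once these bookkeeping issues are handled, every item reduces to one instance of the lifting axiom, exactly as in \reflemm{homiseqrel}.
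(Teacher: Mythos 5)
Your treatment of items (2)--(5) is essentially the paper's own proof: precompose with the weak equivalence $r$, use the identities $\mu(r,r)\simeq_{A\times A}r$ and $\sigma r\simeq_{A\times A}r$ recorded just before the statement, chain the resulting fibrewise homotopies via \reflemm{sueful2} and \reflemm{homandpostcomp}, and then cancel $r$ with the fibrewise version of \reflemm{sueful}. (Be slightly careful in (3): $\mu(rs,1)r=\mu(r,r)$ is only \emph{homotopic} to $r$, not equal to it, but since you invoke the recorded homotopy this is harmless.) That part matches the paper.

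The genuine gap is in item (1). You propose to cancel the ``triple reflexivity'' map $(r,r,r)\colon A\to PA\times_A PA\times_A PA$ and assert that it is a weak equivalence ``by iterated application of \reflemm{recurringpullback}''. But \reflemm{recurringpullback} only exhibits the individual factors $r\colon A\to PA$, $(1,rt)\colon PA\to PA\times_A PA$ and $(1,rtp_2)\colon PA\times_A PA\to PA\times_A PA\times_A PA$ as weak equivalences; your map is their \emph{composite}, and in the appendix's setting the class of weak equivalences is not assumed closed under composition --- there is no 2-out-of-3 or 2-out-of-6 axiom there, and in the intended application (Section 6) the weak equivalences are precisely the maps $r$ and their pullbacks along fibrations, a class with no evident closure under composition. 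So the hypothesis of \reflemm{sueful} is not met for $(r,r,r)$ as stated. The argument can be repaired by applying \reflemm{sueful} three times, once for each factor of the composite, at the cost of tracking intermediate homotopies; the paper avoids this by degenerating only the \emph{last} coordinate: precomposing both associativity composites with the single weak equivalence $w=(1,rtp_2)\colon PA\times_A PA\to PA\times_A PA\times_A PA$ reduces each of them, via the unit law (2), to $\mu$ itself, and one application of \reflemm{sueful} finishes the proof. This choice also spares you the zig-zag needed to evaluate $\mu(r,\mu(r,r))$ and $\mu(\mu(r,r),r)$.
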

\begin{proof} We take each of these points in turn, making heavy use of the three lemmas we just proved.
\begin{enumerate}
\item \reflemm{recurringpullback} gives us that $w = (1, rtp_2): PA \times_A PA \to PA \times_A PA \times_A PA$ is a weak equivalence. Because \[ \mu(\mu \times_X 1)w = \mu(\mu, rtp_2)= \mu(\mu, rt\mu) = \mu(1,rt)\mu \simeq_{A \times A} \mu \] and \[ \mu(1 \times_X \mu)(1, rtp_2) = \mu(p_1, \mu(p_2, rtp_2)) = \mu(p_1, \mu(1, rt)p_2) \simeq_{A \times A} \mu(p_1, p_2) = \mu, \] the associativity of $\mu$ follows from the previous lemma.
\item was already proved in \reflemm{homiseqrel}.
\item From $\mu(rs, 1)r = \mu(r,r) \simeq_{A \times A} r$ and the previous lemma we deduce that $\mu(rs, 1) \simeq_{A \times A} 1$, as desired.
\item From $\mu(1,\sigma)r = \mu(r,\sigma r) \simeq_{A \times A} \mu(r, r) \simeq_{A \times A} r = (rs)r$ and the previous lemma we deduce $\mu(1,\sigma) \simeq_{A \times A} rs$.
\item is similar to (4).
\end{enumerate}
\end{proof}

\subsection{Constructing equivalence relations.}
A crucial fact is that fibrations allow for some notion of transport.
\begin{lemm}{existencetransport}
Let $f: Y \to X$ be a display map whose codomain $X$ is a type. Then there is a map $\Gamma: Y \times_X PX \to Y$ such that $f\Gamma = tp_2$ and $\Gamma(1, rf) \simeq_X 1$.
\end{lemm}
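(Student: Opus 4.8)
The plan is to obtain $\Gamma$ in one stroke from the lifting axiom (5) of the appendix setting, applied to a suitable square whose left leg is the weak equivalence supplied by \reflemm{recurringpullback}.

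First I would record the trivial observation that $tr = 1_X$: since $r \colon X \to PX$ satisfies $(s,t)r = \Delta_X = (1_X,1_X)$, both components give $sr = tr = 1_X$. Consequently $tp_2 \circ (1,rf) = t \circ rf = (tr)f = f$, so the square
\[
\begin{array}{ccc}
Y & \xrightarrow{\;1_Y\;} & Y \\[2pt]
{\scriptstyle (1,rf)}\big\downarrow & & \big\downarrow{\scriptstyle f} \\[4pt]
Y \times_X PX & \xrightarrow{\;tp_2\;} & X
\end{array}
\]
commutes. Its right-hand leg $f$ is a display map by hypothesis; its left-hand leg is a weak equivalence by \reflemm{recurringpullback}, since $f$ is in particular a fibration whose codomain $X$ is a type.

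Now I would invoke axiom (5) of the appendix for this square. It yields a map $\Gamma \colon Y \times_X PX \to Y$ together with a map $H \colon Y \to P_X(Y)$ such that $f\Gamma = tp_2$, $sH = \Gamma(1,rf)$ and $tH = 1_Y$. The first equation is one half of what we want; the last two equations say precisely that $H$ is a fibrewise homotopy $\Gamma(1,rf) \simeq_X 1_Y$ (both maps lie over $X$ since $f\Gamma(1,rf) = tp_2(1,rf) = f = f \cdot 1_Y$), which is the other half. This completes the argument.

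There is no real obstacle here: the only thing to be careful about is matching the data produced by axiom (5) to the shape required in the statement, and checking the harmless identity $tr = 1_X$ that makes the square commute. Everything else is a direct citation of \reflemm{recurringpullback} and the lifting axiom.
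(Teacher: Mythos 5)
Your proof is correct and is essentially identical to the paper's: both form the commuting square with $(1,rf)$ on the left and $f$ on the right, cite \reflemm{recurringpullback} to see that the left leg is a weak equivalence, and apply the lifting axiom (5) to extract $\Gamma$ and the fibrewise homotopy $H$. The paper merely states this more tersely; your extra checks (that $tr=1_X$ makes the square commute, and that the data from axiom (5) match the required shape) are the same routine verifications left implicit there.
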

\begin{proof}
The square
\diag{ Y \ar[d]_{(1, rf)} \ar[r]^1 & Y \ar[d]^f \\
Y \times_X PX \ar[r]_(.6){tp_2} & X }
commutes, so this follows from \reflemm{recurringpullback}.
\end{proof}

In the remainder of this subsection, we will study a more general situation. In fact, we will assume that:
\begin{enumerate}
\item[(a)] We are given a fibration $f: Y \to X$ whose codomain $X$ is a type.
\item[(b)] There is an equivalence relation $\tau: T \to Y \times_X Y$.
\item[(c)] There are maps  $\Gamma: Y \times_X PX \to Y$ and $H: Y \to T$ such that  $f\Gamma = t p_2$ and $\tau H = (1, \Gamma(1, rf))$ (we will call such a map $\Gamma$ a \emph{$T$-transport}).
\item[(d)] Any square with $\tau$ on the right and a weak equivalence on the left has a lower filler.
\end{enumerate}

We will write $\tau_1 := p_1\tau$ and $\tau_2 := p_2\tau$ for the two maps $T \to Y$.

\begin{lemm}{toshowTequiv}
Suppose that $m, n: Z \to Y$ are such that $fm = fn$. If $w: Z' \to Z$ is a weak equivalence and $mw \sim_T nw$, then $m \sim_T n$.
\end{lemm}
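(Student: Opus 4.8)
\textbf{Proof proposal for \reflemm{toshowTequiv}.}

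The plan is to reduce the statement to a diagonal-filling property against the display map $\tau$, using the hypotheses (a)--(d) and the fact that $T$ is an equivalence relation. First I would record what the hypothesis gives us: there is a map $K: Z' \to T$ with $\tau K = (mw, nw)$, i.e.\ $\tau_1 K = mw$ and $\tau_2 K = nw$. The goal is to produce a map $L: Z \to T$ with $\tau_1 L = m$ and $\tau_2 L = n$, which is exactly the statement $m \sim_T n$.

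The natural first move is to assemble a commuting square with $w$ on the left and $\tau$ on the right. We have the square
\diag{ Z' \ar[d]_w \ar[r]^K & T \ar[d]^{\tau} \\
Z \ar[r]_(.4){(m, n)} & Y \times_X Y }
which commutes because $\tau K = (mw, nw) = (m, n)w$; here $(m,n)$ is well-defined into $Y \times_X Y$ since $fm = fn$. Now assumption (d) applies directly: any square with $\tau$ on the right and a weak equivalence on the left has a lower filler. This yields a map $L: Z \to T$ with $\tau L = (m, n)$, which is precisely what we want. So, on the face of it, the lemma is an immediate consequence of (d) together with the hypothesis that $mw \sim_T nw$ provides the top map of the square.

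I expect the only subtlety --- and hence the main thing to check carefully --- is that assumption (d) is genuinely strong enough here: it asserts the existence of a \emph{lower} filler $d$ with $\tau d = (m,n)$ (making the lower triangle commute on the nose), not merely a homotopy filler, so no further correction via the equivalence-relation structure of $\tau$ is needed. If instead one only had the weaker lifting property of \reftheo{weakliftinginfibrcat} (a lower filler up to fibrewise homotopy), one would have to work harder: produce $d$ with $\tau d$ fibrewise homotopic to $(m,n)$, transport along that homotopy using $\Gamma$ and $H$ to adjust $d$ into a genuine $T$-relation between $m$ and $n$, and invoke that $T$ is an equivalence relation (symmetry and transitivity, via $\tau$'s structure) to compose the pieces. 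But since (d) is stated as an exact lower-filler condition, the argument above is complete, and the role of hypotheses (a)--(c) in this particular lemma is only to guarantee that the ambient setup makes sense; they will be used in the subsequent lemmas rather than here.
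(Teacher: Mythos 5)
Your proof is correct and is essentially identical to the paper's: both form the square with $K$ on top, $w$ on the left, $\tau$ on the right and $(m,n)$ on the bottom, and then invoke assumption (d) to obtain a lower filler witnessing $m \sim_T n$. Your closing remark that (d) gives an exact lower filler (so no homotopy correction is needed) is exactly the point the paper relies on.
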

\begin{proof}
If $mw \sim_T nw$, there is a map $K: Z' \to T$ such that
\diag{ Z' \ar[r]^K \ar[d]_w & T \ar[d]^\tau \\
Z \ar[r]_(.35){(m, n)} & Y \times_X Y }
commutes. Assumption (d) tells us that this diagram has a lower filler and hence we can deduce that $m \sim_T n$.
\end{proof}

\begin{lemm}{Tconnunique}
$T$-transports are unique up to $T$-equivalence; more precisely, if $\Gamma$ and $\Gamma'$ are two $T$-transports, then $\Gamma \sim_T \Gamma'$.
\end{lemm}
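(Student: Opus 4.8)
The plan is to reduce the statement to \reflemm{toshowTequiv} by precomposing with the weak equivalence $(1, rf) : Y \to Y \times_X PX$ supplied by \reflemm{recurringpullback}.

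First I would note that $\Gamma$ and $\Gamma'$ are parallel maps $Y \times_X PX \to Y$ with $f\Gamma = tp_2 = f\Gamma'$, so $(\Gamma, \Gamma')$ is a well-defined map into $Y \times_X Y$ and the assertion $\Gamma \sim_T \Gamma'$ makes sense. Applying \reflemm{toshowTequiv} with $m := \Gamma$, $n := \Gamma'$, $Z := Y \times_X PX$, $Z' := Y$ and $w := (1, rf)$ (which is a weak equivalence by \reflemm{recurringpullback}), it suffices to establish the precomposed relation $\Gamma(1, rf) \sim_T \Gamma'(1, rf)$, since $f\Gamma = f\Gamma'$ is already known.

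To get this, I would invoke the defining data of a $T$-transport. By assumption (c) there are maps $H, H' : Y \to T$ with $\tau H = (1, \Gamma(1, rf))$ and $\tau H' = (1, \Gamma'(1, rf))$; these witness $1 \sim_T \Gamma(1, rf)$ and $1 \sim_T \Gamma'(1, rf)$. Moreover $f \cdot 1 = f$ and $f\Gamma(1, rf) = tp_2(1, rf) = trf = f$, and likewise for $\Gamma'$, so all three maps $1$, $\Gamma(1, rf)$ and $\Gamma'(1, rf)$ lie in the set $\{ g : Y \to Y \mid fg = f \}$, on which $\sim_T$ is an equivalence relation. Symmetry and transitivity then yield $\Gamma(1, rf) \sim_T 1 \sim_T \Gamma'(1, rf)$, hence $\Gamma(1, rf) \sim_T \Gamma'(1, rf)$, and \reflemm{toshowTequiv} gives $\Gamma \sim_T \Gamma'$ as desired.

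I do not anticipate a real obstacle: the only points needing care are that $(1, rf)$ genuinely is a weak equivalence (already recorded in \reflemm{recurringpullback}) and that the three maps above share a common codomain-fibre over $f$, so that transitivity of $\sim_T$ may be applied — both are routine verifications.
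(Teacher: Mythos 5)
Your proposal is correct and follows essentially the same route as the paper: both arguments observe that $\Gamma(1,rf)$ and $\Gamma'(1,rf)$ are each $T$-equivalent to the identity on $Y$ by the defining property of a $T$-transport, hence $T$-equivalent to one another, and then cancel the weak equivalence $(1,rf)$ via \reflemm{toshowTequiv}. Your version merely spells out the well-definedness and fibre checks that the paper leaves implicit.
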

\begin{proof}
If $\Gamma$ and $\Gamma'$ are both $T$-transports, then $\Gamma(1, rf)$ and $\Gamma'(1, rf)$ will be $T$-equivalent, as they are both $T$-equivalent to the identity on $Y$. Since $(1, rf)$ is a weak equivalence, the desired statement now follows from the previous lemma.
\end{proof}

\begin{lemm}{TconnpresTequivalence}
$T$-transports preserve $T$-equivalence; more precisely, if $\Gamma$ is a $T$-transport, the two maps \[ \Gamma(\tau_1p_1, p_2), \Gamma(\tau_2p_1,p_2): T \times_X PX \to Y \] are $T$-equivalent.
\end{lemm}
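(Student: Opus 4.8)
The plan is to follow the same pattern as the proof of \reflemm{Tconnunique}: precompose with a well-chosen weak equivalence and invoke \reflemm{toshowTequiv}. Observe first that both maps in the statement have domain $T\times_X PX$, which is the pullback of $f\tau_1 : T\to X$ along $s : PX\to X$ (this is exactly the pullback that makes $\Gamma(\tau_ip_1,p_2)$ well-defined), and that both have the same composite with $f$, namely $tp_2 : T\times_X PX\to X$. So by \reflemm{toshowTequiv} it suffices to produce one weak equivalence into $T\times_X PX$ along which the two maps become $T$-equivalent.

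I would next note that $f\tau_1 = fp_1\tau : T\to X$ is a composite of fibrations — $\tau$ is a fibration since it is an equivalence relation, $p_1 : Y\times_X Y\to Y$ is a fibration as a pullback of $f$, and $f$ is a fibration — hence a fibration, with codomain the type $X$. Therefore \reflemm{recurringpullback} applies and shows that
\[ (1, rf\tau_1) : T \longrightarrow T\times_X PX \]
is a weak equivalence. A direct computation, using $sr = 1$ and $f\tau_1 = f\tau_2$ (so that $rf\tau_1 = rf\tau_2$), then gives
\[ \Gamma(\tau_1p_1,p_2)(1, rf\tau_1) = \Gamma(1, rf)\,\tau_1, \qquad \Gamma(\tau_2p_1,p_2)(1, rf\tau_1) = \Gamma(1, rf)\,\tau_2 . \]

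Hence it remains to prove $\Gamma(1, rf)\tau_1 \sim_T \Gamma(1, rf)\tau_2$, and here I would use assumption (c): the map $H : Y\to T$ satisfies $\tau H = (1_Y, \Gamma(1, rf))$, so $H$ witnesses $1_Y \sim_T \Gamma(1, rf)$ as self-maps of $Y$. Since $T$-equivalence is stable under precomposition, precomposing $H$ with $\tau_1$ and with $\tau_2$ yields $\tau_1 \sim_T \Gamma(1, rf)\tau_1$ and $\tau_2 \sim_T \Gamma(1, rf)\tau_2$; moreover the identity on $T$ witnesses $\tau_1 \sim_T \tau_2$ because $\tau = (\tau_1,\tau_2)$. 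All four of these maps lie in the set $\{\, g : T\to Y : fg = f\tau_1 \,\}$, on which $\sim_T$ is an equivalence relation, so symmetry and transitivity give $\Gamma(1, rf)\tau_1 \sim_T \tau_1 \sim_T \tau_2 \sim_T \Gamma(1, rf)\tau_2$. Combining this with the previous step and \reflemm{toshowTequiv} finishes the proof.

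The argument is essentially routine; the one genuinely content-bearing move is the choice of the weak equivalence $(1, rf\tau_1)$ together with the free fact that $\tau_1 \sim_T \tau_2$. The only place demanding care is the bookkeeping of the various pullbacks: verifying that $T\times_X PX$ is indeed the pullback along $s$, that $(1, rf\tau_1)$ is well-defined with the intended codomain, and that the two composites simplify exactly as claimed.
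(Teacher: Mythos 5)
Your proposal is correct and follows essentially the same route as the paper's own proof: precompose with the weak equivalence $(1, rf\tau_1)$ obtained from \reflemm{recurringpullback}, compute that the two maps become $\Gamma(1,rf)\tau_1$ and $\Gamma(1,rf)\tau_2$, and conclude via $\Gamma(1,rf)\tau_i \sim_T \tau_i$ (from assumption (c)) together with $\tau_1 \sim_T \tau_2$ and \reflemm{toshowTequiv}. You merely make explicit a few points the paper leaves implicit (why $f\tau_1$ is a fibration, and the role of $H$), which is fine.
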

\begin{proof}
The map $(1,rf\tau_1)$ is a weak equivalence by \reflemm{recurringpullback}, so it suffices to prove that $\Gamma(\tau_1p_1, p_2)$ and $\Gamma(\tau_2p_1,p_2)$ become $T$-equivalent after precomposing with this map. However, we have
\[ \Gamma(\tau_1p_1, p_2)(1,rf\tau_1) = \Gamma(\tau_1,rf\tau_1) = \Gamma(1, rf)\tau_1 \sim_T \tau_1 \]
and
\[ \Gamma(\tau_2p_1, p_2)(1,rf\tau_1) = \Gamma(\tau_2,rf\tau_1) = \Gamma(\tau_2,rf\tau_2) =\Gamma(1, rf)\tau_2 \sim_T \tau_2, \]
while $\tau_1 \sim_T \tau_2$ is true (almost) by definition.
\end{proof}

\begin{lemm}{Tconnstableunderhom}
If $\Gamma$ is a $T$-transport, the two maps \[ \Gamma(p_1,sp_2), \Gamma(p_1, tp_2): Y \times_X P_{X \times X}(PX) \to Y \] are $T$-equivalent.
\end{lemm}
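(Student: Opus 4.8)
The plan is to reduce this statement to \reflemm{TconnpresTequivalence}, the fact that $T$-transports preserve $T$-equivalence, via a suitable substitution. The point is that $P_{X \times X}(PX)$ is a path object for the display map $(s,t) : PX \to X \times X$, so a map into $Y \times_X P_{X \times X}(PX)$ amounts to a point $y \in Y$ together with a (fibrewise) homotopy between two paths in $X$ sharing both endpoints with $fy$ as source, and the two maps in question apply $\Gamma$ to transport $y$ along each of these two homotopic paths. Since homotopic paths are $PX$-equivalent (as $(s,t): PX \to X \times X$ is an equivalence relation by \reflemm{homiseqrel}), this is exactly the situation that \reflemm{TconnpresTequivalence} handles once we have massaged it into the right shape.

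First I would record that $P_{X \times X}(PX) \to PX \times_{X \times X} PX$ is a path object for $(s,t): PX \to X \times X$, so composing its source and target maps with the two projections produces a map $P_{X \times X}(PX) \to PX$ which together with the ``homotopy witness'' shows: writing $q := (s,t)p_2 : Y \times_X P_{X \times X}(PX) \to X \times X$ — wait, more carefully, the object $Y \times_X P_{X \times X}(PX)$ is the pullback of $f : Y \to X$ against the composite $PX \xrightarrow{?} X$; I would fix the source map $s : PX \to X$ as the structure map used to form this fibre product. An element of $Y \times_X P_{X \times X}(PX)$ then consists of $y \in Y$ and $\theta \in P_{X \times X}(PX)$ with $s(s_{X\times X}\theta) = fy$ (using that source and target in $P_{X \times X}(PX)$ are parallel over $X \times X$, so both projections of $s_{X\times X}\theta$ agree with those of $t_{X \times X}\theta$, hence both underlying paths $p := s_{X \times X}\theta$ and $p' := t_{X \times X}\theta$ in $PX$ have source $fy$). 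So $\Gamma(p_1, s p_2)$ sends this datum to $\Gamma(y, p)$ and $\Gamma(p_1, t p_2)$ to $\Gamma(y, p')$, where $p \simeq_{X \times X} p'$ via $\theta$, hence $p \sim_{PX} p'$ since the fibrewise homotopy relation refines $PX$-equivalence (indeed $P_{X \times X}(PX)$ witnesses exactly this).

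The key step is then to transport across \reflemm{TconnpresTequivalence}. That lemma gives $\Gamma(\tau_1 p_1, p_2) \sim_T \Gamma(\tau_2 p_1, p_2) : T \times_X PX \to Y$. Now observe that the canonical map $P_{X \times X}(PX) \to PX \times_{X\times X} PX$, precomposed appropriately, realises a pair of $PX$-equivalent paths; since $(s,t) : PX \to X \times X$ is an equivalence relation, the map $H_{PX} : PX \to P_{X}(PX)$ — rather, the relevant comparison is: $PX$ itself plays the r\^ole of the equivalence relation $T$ on the display map $s : PX \to X$, but with base type $X$ rather than the fibre, so I would instead invoke \reflemm{TconnpresTequivalence} with $T = PX$ viewed as an equivalence relation on... this is where the bookkeeping bites. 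Cleaner: I would directly build a map $Y \times_X P_{X \times X}(PX) \to T \times_X PX$ over which the two maps of the present lemma are the restrictions of the two maps of \reflemm{TconnpresTequivalence}, so that $T$-equivalence pulls back along it (since $T$-equivalence is preserved by precomposition — it is an equivalence relation on a hom-set and precomposition is covered by the general discussion after \refdefi{eqrelation}). Concretely, from $\theta$ I extract the $PX$-equivalence $\varepsilon(\theta) \in (\text{an equivalence relation } R \text{ on } s:PX\to X)$; applying $\Gamma$-structure and \reflemm{TconnpresTequivalence} in the tribe $\ct{C}(X)$, where $PX$ is a type-like object, yields the claim.

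The main obstacle will be precisely this last bookkeeping: matching the fibre product $Y \times_X P_{X \times X}(PX)$ with the fibre product $T \times_X PX$ of \reflemm{TconnpresTequivalence} under the right identification of structure maps and verifying that $P_{X\times X}(PX)$ genuinely supplies a $PX$-equivalence between the two underlying paths with the correct compatibility over $X$ (i.e.\ that the common source is $fy$, not merely that the two paths are related). Once that identification is set up, the conclusion is immediate from \reflemm{TconnpresTequivalence} together with stability of $T$-equivalence under precomposition. I would also double-check, using \reflemm{sueful2} if needed, that forming the pair with the $Y$-coordinate $y$ does not disturb the equivalence — though here $y$ is carried along unchanged, so this should be automatic.
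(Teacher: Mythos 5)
There is a genuine gap here: the reduction to \reflemm{TconnpresTequivalence} that your whole argument hangs on cannot be made to work, and you never actually complete it. \reflemm{TconnpresTequivalence} varies the point of $Y$ along a \emph{fixed} path (its two maps are $\Gamma(\tau_1p_1,p_2)$ and $\Gamma(\tau_2p_1,p_2)$, i.e.\ $\Gamma(\tau_1t,\alpha)$ and $\Gamma(\tau_2t,\alpha)$ for a single $\alpha$), whereas the present lemma fixes the point $y$ and varies the path (its two maps are $\Gamma(y,s\theta)$ and $\Gamma(y,t\theta)$ with $s\theta \neq t\theta$ in general). These are orthogonal statements, and no map $Y \times_X P_{X\times X}(PX) \to T \times_X PX$ can exhibit the latter pair as a precomposition of the former pair: the single $PX$-component of such a map would have to equal both $s\theta$ and $t\theta$ simultaneously. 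Your fallback, treating $PX$ itself as the equivalence relation $T$, also conflates two different relations: $(s,t)\colon PX \to X\times X$ is an equivalence relation on \emph{points of $X$}, while ``the two paths $s\theta$ and $t\theta$ are related by $\theta$'' is the relation induced by $P_{X\times X}(PX) \to PX\times_{X\times X}PX$ on \emph{points of $PX$}; neither is of the form required by the hypotheses (b)--(d) governing $T$ on the fibration $f\colon Y\to X$. You correctly identify the intended semantic content of the lemma, but the bookkeeping you flag as ``the main obstacle'' is not an obstacle to be overcome -- it is the sign that the route is wrong.

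The paper's proof is much more direct and uses neither \reflemm{TconnpresTequivalence} nor any equivalence relation on paths. The map $1\times_X r\colon Y\times_X PX \to Y\times_X P_{X\times X}(PX)$ is a pullback of the weak equivalence $r\colon PX \to P_{X\times X}(PX)$ along the (fibration) projection to $P_{X\times X}(PX)$, hence a weak equivalence by axiom~(3). Precomposing both maps of the lemma with it gives
\[
\Gamma(p_1,sp_2)(1\times_X r) \;=\; \Gamma(p_1,p_2) \;=\; \Gamma(p_1,tp_2)(1\times_X r),
\]
since $sr = tr = 1_{PX}$; so the two maps become \emph{equal}, not merely $T$-equivalent, after precomposition, and \reflemm{toshowTequiv} (cancellation of a weak equivalence on the right of a $T$-equivalence) finishes the proof. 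If you want to salvage your write-up, replace the appeal to \reflemm{TconnpresTequivalence} by this two-line argument.
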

This lemma should be understood as saying the following: if $\Gamma$ is a $T$-transport  and $\alpha$ and $\beta$ are two paths with endpoints $x_0$ to $x_1$ and $\alpha$ and $\beta$ are homotopic relative those endpoints, then for any $y \in Y$ with $f(y) = x_0$ the elements $\Gamma(y, \alpha)$ and $\Gamma(y, \beta)$ will be $T$-equivalent.
\begin{proof}
The map
\[ 1 \times_X r: Y \times_X PX \to Y \times_X P_{X \times X}(PX) \]
is the pullback along the projection and fibration $Y \times_X P_{X \times X}(PX) \to P_{X \times X}(PX)$ of the weak equivalence $r: PX \to P_{X \times X}(PX)$, and hence a weak equivalence as well. Therefore to show that $\Gamma(p_1,sp_2)$ and $\Gamma(p_1, tp_2)$ are $T$-equivalent it suffices to show that they become $T$-equivalent after precomposing with $1 \times_X r$. However,
\[ \Gamma(p_1,sp_2)(1 \times_X r) = \Gamma(p_1, p_2) = \Gamma(p_1, tp_2)(1 \times_X r), \]
so after precomposing with $1 \times_X r$ these maps actually become equal and the lemma follows.
\end{proof}

\begin{lemm}{Tconnandcomposition}
If $\Gamma$ is a $T$-transport and $\mu: PX \times_X PX \to PX$ is a composition on $PX$, then the two maps \[ \Gamma(1 \times_X \mu), \Gamma(\Gamma \times_X 1): Y \times_X PX \times_X PX \to Y \] are $T$-equivalent.
\end{lemm}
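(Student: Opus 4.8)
The plan is to reduce the statement to an instance of \reflemm{toshowTequiv} by precomposing with a weak equivalence that replaces the second path by a reflexivity path. Write $q_1,q_2,q_3$ for the three projections out of $Y\times_X PX\times_X PX$ and $p_1,p_2$ for the two projections out of $Y\times_X PX$. First I would observe that the two maps in the statement agree after postcomposition with $f$: since $t\mu=t\pi_2$ (where $\pi_1,\pi_2$ are the projections of $PX\times_X PX$) and $f\Gamma=tp_2$, one computes $f\Gamma(1\times_X\mu)=tq_3=f\Gamma(\Gamma\times_X 1)$. Hence, by \reflemm{toshowTequiv}, it suffices to produce a weak equivalence $w\colon Y\times_X PX\to Y\times_X PX\times_X PX$ for which $\Gamma(1\times_X\mu)\,w\sim_T\Gamma(\Gamma\times_X 1)\,w$.

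For $w$ I would take the unique map with $q_1w=p_1$, $q_2w=p_2$ and $q_3w=rtp_2$, i.e.\ the map which appends, at the endpoint of the first path, the reflexivity path there. This $w$ is a weak equivalence: the square with $w$ on the left, the projection $p_2$ on top, $(1,rt)\colon PX\to PX\times_X PX$ on the right, and $(q_2,q_3)\colon Y\times_X PX\times_X PX\to PX\times_X PX$ on the bottom is a pullback; the bottom map is a fibration since it is a pullback of $f$, while $(1,rt)$ is a weak equivalence by \reflemm{recurringpullback} applied to the fibration $t\colon PX\to X$ into the type $X$ (here $PX\times_X PX$ is precisely the pullback of $s$ along $t$ over which $\mu$ is defined). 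So $w$ is a weak equivalence by the standing assumption that weak equivalences are stable under pullback along fibrations.

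It then remains to check that both $\Gamma(1\times_X\mu)\,w$ and $\Gamma(\Gamma\times_X 1)\,w$ are $T$-equivalent to the single map $\Gamma\colon Y\times_X PX\to Y$; transitivity of $\sim_T$ on the hom-set (valid since $\tau$ is an equivalence relation) then finishes the argument. For the first one, $\Gamma(1\times_X\mu)\,w=\Gamma\bigl(p_1,\mu(p_2,rtp_2)\bigr)=\Gamma\bigl(p_1,\mu(1,rt)p_2\bigr)$, and since $\mu(1,rt)\simeq_{X\times X}1$ by \refprop{fibrantgroupoid}(2) the paths $\mu(1,rt)p_2$ and $p_2$ are fibrewise homotopic relative to their endpoints, so \reflemm{Tconnstableunderhom} yields $\Gamma\bigl(p_1,\mu(1,rt)p_2\bigr)\sim_T\Gamma(p_1,p_2)=\Gamma$. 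For the second one, since $f\Gamma=tp_2$ we have $rtp_2=rf\Gamma$, hence $\Gamma(\Gamma\times_X 1)\,w=\Gamma(\Gamma,rf\Gamma)=\Gamma(1,rf)\circ\Gamma$, which is $\sim_T\Gamma$ because $\Gamma(1,rf)\sim_T 1_Y$ (this is the defining property $\tau H=(1,\Gamma(1,rf))$ of a $T$-transport) and $\sim_T$ is preserved by precomposition. The only delicate point is the verification that $w$ is genuinely a weak equivalence, that is, identifying the correct pullback square among the nested fibre products and their projections; the two $T$-equivalences themselves are immediate once the right lemmas of this subsection are invoked.
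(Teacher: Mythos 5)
Your proof is correct and follows essentially the same route as the paper: precompose with the weak equivalence $(p_1,p_2,rtp_2)$, show both composites are $T$-equivalent to $\Gamma$ via \reflemm{Tconnstableunderhom} and the defining property $\Gamma(1,rf)\sim_T 1$, and conclude by \reflemm{toshowTequiv}. The only (immaterial) difference is that you justify that this map is a weak equivalence by exhibiting it as a pullback of $(1,rt)$ along a fibration, whereas the paper applies \reflemm{recurringpullback} directly to the fibration $tp_2\colon Y\times_X PX\to X$.
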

This lemma says: if $\Gamma$ is a $T$-transport, $\alpha$ and $\beta$ are two composable paths, and $y \in Y$ is such that $f(y) = s(\alpha)$, then $\Gamma(y, \mu(\alpha, \beta))$ and $\Gamma(\Gamma(y, \alpha), \beta)$ are $T$-equivalent.
\begin{proof}
Recall that $\mu$ being a composition on $PX$ means that $\mu(1, rt) \simeq_{X \times_I X} 1$.

By \reflemm{recurringpullback} the map
\[ (p_1,p_2,rtp_2) = (1, rtp_2): Y \times_X PX \to Y \times_X PX \times_X PX \]
is a weak equivalence, so it suffices to show that $\Gamma(1 \times_X \mu)(p_1,p_2,rtp_2)$ and $\Gamma(\Gamma \times_X 1)(p_1,p_2,rtp_2)$ are $T$-equivalent. However,
\[  \Gamma(\Gamma \times_X 1)(p_1,p_2,rtp_2) = \Gamma(\Gamma(p_1, p_2),rtp_2) = \Gamma(1, rf)\Gamma(p_1,p_2) \sim_T \Gamma(p_1,p_2)= \Gamma, \]
and
\[ \Gamma(1 \times_X \mu)(p_1,p_2,rtp_2) = \Gamma(p_1, \mu(p_2, rtp_2)) \sim_T \Gamma(p_1,p_2) = \Gamma \]
by \reflemm{Tconnstableunderhom}, so the lemma follows.
\end{proof}

We now come to the main point of this appendix. Given the data at the beginning of this subsection, we can take two pullbacks:
\diag{ T \ar[d] & & S \ar[ll] \ar[d] \\
Y \times_X Y & & Y \times_X PX \times_X Y \ar[r]^(.6){p_2} \ar[d]_{(p_1,p_3)} \ar[ll]^(.55){(\Gamma(p_1,p_2),p_3)} & PX \ar[d]^{(s, t)} \\
& & Y \times Y \ar[r]_{f \times f} & X \times X.}
To get a better handle on $S$ it will be helpful to make use of the language of generalised elements. Indeed, from the universal property of $S$ it follows that there is a one-to-one correspondence between maps $Z \to S$ and quadruples $(y_0, y_1, \alpha, t)$, where $y_0, y_1: Z \to Y$ are two maps, $\alpha: Z \to PX$ is such that $s\alpha = f(y_0), t\alpha = f(y_1)$ and $t: Z \to T$ satisfies $\tau t = (\Gamma(y_0, \alpha), y_1)$. This justifies the idea of thinking of $S$ as a ``set'' with elements of the form $(y_0 \in Y, y_1 \in Y, \alpha \in PX, t \in T)$; we will use such set-theoretic language below and trust that the reader can easily translate arguments in this language into diagrammatic proofs, if he or she wishes.

\begin{theo}{mainconstrforTconnections}
The object $S \to Y \times Y$ defined above is an equivalence relation.
\end{theo}
\begin{proof}
We will use the language of generalised elements. We check:
\begin{enumerate}
\item Since $\Gamma$ is a $T$-transport, there is for any $y \in Y$ an element $t(y)$ such that $\tau t(y) = (\Gamma(y,rfy), y)$. Therefore we can define a map $Y \to S$ by sending $y \in Y$ to $(y, y, rf(y), t(y))$, showing reflexivity.
\item To show symmetry, suppose $\alpha$ is a path in $PX$ and $\Gamma(y_0, \alpha) \sim_T y_1$. From the groupoid structure on $PX$ we obtain an element $\sigma \alpha \in PX$ such that $\mu(\alpha, \sigma \alpha) \simeq_{X \times X} rf(y_0)$. The previous lemmas imply that
\[ \Gamma(y_1, \sigma \alpha) \sim_T \Gamma(\Gamma(y_0, \alpha), \sigma \alpha)  \sim_T \Gamma(y_0, \mu(\alpha, \sigma \alpha)) \sim_T \Gamma(y_0,rfy_0) \sim_T y_0, \]
and hence there is also an element $(y_1,y_0,\sigma \alpha, t_1) \in S$ for some suitable $t_1$. This proves symmetry of $S$.
\item To prove transivity, suppose $\Gamma(y_0, \alpha) \sim_T y_1$ and $\Gamma(y_1, \beta) \sim_T y_2$. Then
\[ \Gamma(y_0,\mu(\alpha, \beta))  \sim_T \Gamma(\Gamma(y_0, \alpha), \beta)  \sim_T \Gamma(y_1, \beta) \sim_T y_2, \]
and hence $S$ is transitive.
\end{enumerate}
\end{proof}

\begin{prop}{dingetje}
Suppose $\tau': T' \to Y \times_X Y$ is an equivalence relation similar to $\tau$, and $\Gamma': Y \times_X PX \to Y$ is a $T'$-transport, and let $S' \to Y \times Y$ be the equivalence relation built from $T'$ in the same way as $S$ is built from $T$. Then $S$ and $S'$ are similar.
\end{prop}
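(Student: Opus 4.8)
The plan is to apply \reflemm{eqrelinducingthesameeqrel}: to prove $S$ and $S'$ similar it is enough to exhibit maps $S \to S'$ and $S' \to S$ over $Y \times Y$, and by the symmetry of the hypotheses it suffices to construct the first one. The starting point is that, $\tau$ and $\tau'$ being similar, \reflemm{eqrelinducingthesameeqrel} gives maps $T \to T'$ and $T' \to T$ over $Y \times_X Y$; these let one translate witnesses back and forth, so the relations $\sim_T$ and $\sim_{T'}$ agree on every set of parallel maps into $Y$ lying over a common map to $X$. Write $\sim$ for this common relation. Since the notion of a $T$-transport (conditions $f\Gamma = tp_2$ and $\Gamma(1, rf) \sim 1$) refers to $\tau$ only through $\sim$, the map $\Gamma$ is simultaneously a $T$-transport and a $T'$-transport; as $\Gamma'$ is a $T'$-transport by hypothesis, the argument of \reflemm{Tconnunique} (which, like \reflemm{toshowTequiv}, is a statement about $\sim$ alone) gives $\Gamma \sim \Gamma'$ as maps $Y \times_X PX \to Y$.

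Next I would describe $S$ and $S'$ by generalised elements, as in the paragraph preceding \reftheo{mainconstrforTconnections}: a map $Z \to S$ is a quadruple $(y_0, y_1, \alpha, t)$ with $y_0, y_1: Z \to Y$, $\alpha: Z \to PX$ satisfying $s\alpha = f y_0$ and $t\alpha = f y_1$, and $t: Z \to T$ with $\tau t = (\Gamma(y_0, \alpha), y_1)$; likewise a map $Z \to S'$ is such a quadruple together with a map $t': Z \to T'$ satisfying $\tau' t' = (\Gamma'(y_0, \alpha), y_1)$. Applying this to $1_S$ we obtain $y_0, y_1, \alpha, t$ on $S$, so that $\Gamma(y_0, \alpha) \sim y_1$; precomposing the relation $\Gamma \sim \Gamma'$ with $(y_0, \alpha): S \to Y \times_X PX$ gives $\Gamma(y_0, \alpha) \sim \Gamma'(y_0, \alpha)$, whence $\Gamma'(y_0, \alpha) \sim y_1$ by symmetry and transitivity of $\sim$. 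By the very definition of $\sim$ this produces a map $t': S \to T'$ with $\tau' t' = (\Gamma'(y_0, \alpha), y_1)$, and then $(y_0, y_1, \alpha, t')$ is a map $S \to S'$, which lies over $Y \times Y$ because its composite with the projection $S' \to Y \times Y$ is $(y_0, y_1)$, the same as the composite of $S \to Y \times Y$. Interchanging the roles of $(\tau, \Gamma)$ and $(\tau', \Gamma')$ yields a map $S' \to S$ over $Y \times Y$, and \reflemm{eqrelinducingthesameeqrel} completes the proof.

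The point deserving care — which I regard as the crux of the argument — is the observation that the whole transport machinery of this subsection depends on the chosen equivalence relation only through the induced relation $\sim$ on parallel maps; granting this, no lemma from the appendix has to be re-proved for $\tau'$ (so that, in particular, condition (d) for $\tau'$ is never invoked), and everything reduces to the bookkeeping of checking that the quadruples above assemble, via the universal properties of $S$ and $S'$, into genuine maps commuting over $Y \times Y$ — which is carried out diagrammatically exactly as in the proof of \reftheo{mainconstrforTconnections}.
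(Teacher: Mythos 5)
Your proposal is correct and follows essentially the same route as the paper's proof: use the similarity of $T$ and $T'$ to see that $\Gamma$ and $\Gamma'$ are transports for the same relation, invoke \reflemm{Tconnunique} to get $\Gamma \sim_T \Gamma'$, and then transfer witnesses by transitivity in the language of generalised elements. The only cosmetic difference is that you build the map $S \to S'$ where the paper builds $S' \to S$, both justified by the symmetry of the hypotheses.
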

\begin{proof}
By symmetry it suffices to construct a map $S' \to S$ over $Y \times Y$. To build it, we use the language of generalised elements. So let $(y_0, y_1, \alpha, t') \in S'$ be arbitrary, meaning that $s \alpha =f(y_0)$, $t\alpha = f(y_1)$ and $\tau' t' = ((\Gamma'(y_0, \alpha),y_1)$. Since $T$ and $T'$ are similar, there is a map $k: T' \to T$ such that $\tau k = \tau'$, showing that $\Gamma'$ is not just a $T'$-transport, but a $T$-transport as well. So \reflemm{Tconnunique} implies that there is a map $l: Y \times_X PX \to T$ such that $(\Gamma,\Gamma') = \tau l$. So for $t'' := l(y_0,\alpha)$ we have $\tau t'' = (\Gamma(y_0,\alpha), \Gamma'(y_0,\alpha))$. Since $T$ is an equivalence relation we can use transitivity on $t''$ and $t'$ to construct an element $t \in T$ such that $\tau t = (\Gamma(y_0,\alpha), y_1)$. Therefore $(y_0,y_1,\alpha, t) \in S$, as desired.
\end{proof}

As in the main text we may define a type $A$ to be \emph{contractible} if both $A \to 1$ and $PA \to A \times A$ have sections. More generally, a display map $f: Y \to X$ is \emph{contractible} if both $f$ itself and $P_X(Y) \to Y \times_X Y$ have sections.

\begin{prop}{morehappy} Suppose that in the setting of the previous theorem the morphisms $f$ and $\tau$ have sections (so ``$f$ is $T$-contractible''), and $X$ is contractible. Then both $Y \to 1$ and $S \to Y \times Y$ have sections as well (hence ``$Y$ is $S$-contractible'').
\end{prop}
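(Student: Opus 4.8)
The plan is to construct the two required sections explicitly, using the description of generalised elements of $S$ recorded just before \reftheo{mainconstrforTconnections}: a generalised element of $S$ is a quadruple $(y_0, y_1, \alpha, t)$ with $y_0, y_1$ elements of $Y$, $\alpha$ an element of $PX$ satisfying $s\alpha = f(y_0)$ and $t\alpha = f(y_1)$, and $t$ an element of $T$ satisfying $\tau t = (\Gamma(y_0, \alpha), y_1)$; the map $S \to Y \times Y$ then sends this datum to $(y_0, y_1)$. A section of $Y \to 1$ is immediate: by hypothesis $f: Y \to X$ has a section $c$, and since $X$ is contractible $X \to 1$ has a section $x_0$, so $cx_0: 1 \to Y$ does the job.

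For a section of $S \to Y \times Y$, I would argue as follows. Since $X$ is contractible the display map $(s,t): PX \to X \times X$ has a section $j$, so that $sj$ and $tj$ are the two product projections; and by hypothesis $\tau: T \to Y \times_X Y$ has a section $k$. Given an arbitrary generalised element $(y_0, y_1)$ of $Y \times Y$, put $\alpha := j(f(y_0), f(y_1))$, so that $s\alpha = f(y_0)$ and $t\alpha = f(y_1)$; then $f\Gamma(y_0, \alpha) = t\alpha = f(y_1)$ by the defining property $f\Gamma = tp_2$ of the transport $\Gamma$, so $(\Gamma(y_0,\alpha), y_1)$ is a genuine element of $Y \times_X Y$ and $t := k(\Gamma(y_0,\alpha), y_1)$ satisfies $\tau t = (\Gamma(y_0,\alpha), y_1)$. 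Thus $(y_0, y_1, \alpha, t)$ is a generalised element of $S$ lying over $(y_0, y_1)$; taking $(y_0,y_1)$ to be the generic element (i.e. the identity on $Y \times Y$) this defines a map $Y \times Y \to S$ which is, by construction, a section of $S \to Y \times Y$. In diagram-free terms: $j\circ(f\times f)$ together with $1_{Y\times Y}$ induces a map $\phi: Y \times Y \to Y \times_X PX \times_X Y$ with $(p_1,p_3)\phi = 1$, and $k\circ(\Gamma(p_1,p_2),p_3)\circ\phi$ lifts $(\Gamma(p_1,p_2),p_3)\phi$ through $\tau$, so the pair factors through the pullback $S$, yielding the section.

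There is essentially no obstacle here beyond bookkeeping: the only points to verify are that $\alpha$ and $\Gamma(y_0,\alpha)$ land in the correct fibres, and these follow at once from $j$ being a section of $(s,t)$ and from $f\Gamma = tp_2$. In particular the argument uses only that $f$, $\tau$ and $(s,t): PX \to X \times X$ split, together with the equality $f\Gamma = tp_2$; neither assumption (d) nor the homotopy-coherence clause of the $T$-transport condition on $\Gamma$ is needed for this step.
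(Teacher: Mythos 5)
Your proof is correct and follows essentially the same route as the paper: both arguments use the section of $(s,t)\colon PX \to X \times X$ supplied by contractibility of $X$ to produce the path $\alpha$, the identity $f\Gamma = tp_2$ to see that $\Gamma(y_0,\alpha)$ and $y_1$ lie in the same fibre, and the section of $\tau$ to produce the required element of $T$. Your explicit naming of the sections $j$ and $k$ and the remark that assumption (d) is not needed are just slightly more detailed bookkeeping of the paper's own generalised-element argument.
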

\begin{proof}
We again reason using generalised elements. Clearly, if $X$ has a global section and $f: Y \to X$ has a section, $Y$ has a global section as well. Any two elements $y_0, y_1 \in Y$ yield elements $f(y_0)$ and $f(y_1)$ in $X$. Since $X$ is contractible, there is path $\alpha \in PX$ with $s\alpha = f(y_0)$ and $t\alpha = f(y_1)$. Then $f\Gamma(y_0, \alpha) = t \alpha = f(y_1)$, so $\Gamma(y_0, \alpha)$ and $y_1$ are elements in $Y$ living in the same fibre over $X$. Since $\tau$ has a section, there is an element $t \in T$ with $\tau t = (\Gamma(y_0, \alpha), y_1)$. We conclude that $(y_0, y_1, \alpha, t) \in S$, and hence $Y$ is $S$-contractible.
\end{proof}

\bibliographystyle{plain} \bibliography{hSetoids}

\begin{thebibliography}{10}

\bibitem{avigadetal15}
J.~Avigad, K.~Kapulkin, and P.L. Lumsdaine.
\newblock Homotopy limits in type theory.
\newblock {\em Math. Structures Comput. Sci.}, 25(5):1040--1070, 2015.

\bibitem{awodeywarren09}
S.~Awodey and M.A. Warren.
\newblock Homotopy theoretic models of identity types.
\newblock {\em Math. Proc. Cambridge Philos. Soc.}, 146(1):45--55, 2009.

\bibitem{vandenberggarner11}
B.~van~den Berg and R.~Garner.
\newblock Types are weak {$\omega$}-groupoids.
\newblock {\em Proc. Lond. Math. Soc. (3)}, 102(2):370--394, 2011.

\bibitem{vandenberggarner12}
B.~van~den Berg and R.~Garner.
\newblock Topological and simplicial models of identity types.
\newblock {\em ACM Trans. Comput. Log.}, 13(1):Art. 3, 44, 2012.

\bibitem{bergmoerdijk16}
B.~van~den Berg and I.~Moerdijk.
\newblock Exact completion of path categories and algebraic set theory.
\newblock arXiv:1603.02456, 2016.

\bibitem{brown73}
K.S. Brown.
\newblock Abstract homotopy theory and generalized sheaf cohomology.
\newblock {\em Trans. Amer. Math. Soc.}, 186:419--458, 1973.

\bibitem{cohenetal15}
C.~Cohen, T.~Coquand, S.~Huber, and A.~M\"ortberg.
\newblock Cubical type theory: a constructive interpretation of the univalence
  axiom.

\bibitem{gambinogarner08}
N.~Gambino and R.~Garner.
\newblock The identity type weak factorisation system.
\newblock {\em Theoret. Comput. Sci.}, 409(1):94--109, 2008.

\bibitem{hirschhorn03}
P.S. Hirschhorn.
\newblock {\em Model categories and their localizations}, volume~99 of {\em
  Mathematical Surveys and Monographs}.
\newblock American Mathematical Society, Providence, RI, 2003.

\bibitem{hofmann97}
M.~Hofmann.
\newblock Syntax and semantics of dependent types.
\newblock In {\em Semantics and logics of computation ({C}ambridge, 1995)},
  volume~14 of {\em Publ. Newton Inst.}, pages 79--130. Cambridge Univ. Press,
  Cambridge, 1997.

\bibitem{hofmannstreicher98}
M.~Hofmann and T.~Streicher.
\newblock The groupoid interpretation of type theory.
\newblock In {\em Twenty-five years of constructive type theory ({V}enice,
  1995)}, volume~36 of {\em Oxford Logic Guides}, pages 83--111. Oxford Univ.
  Press, New York, 1998.

\bibitem{hovey99}
M.~Hovey.
\newblock {\em Model categories}, volume~63 of {\em Mathematical Surveys and
  Monographs}.
\newblock American Mathematical Society, Providence, RI, 1999.

\bibitem{jacobs99}
B.P.F. Jacobs.
\newblock {\em Categorical logic and type theory}, volume 141 of {\em Stud.
  Logic Foundations Math.}
\newblock North-Holland Publishing Co., Amsterdam, 1999.

\bibitem{kapulkinetal12}
C.~Kapulkin, P.L Lumsdaine, and V.~Voevodsky.
\newblock The simplicial model of univalent foundations.
\newblock arXiv:1211.2851, 2012.

\bibitem{lumsdaine10}
P.L. Lumsdaine.
\newblock Weak {$\omega$}-categories from intensional type theory.
\newblock {\em Log. Methods Comput. Sci.}, 6(3):3:24, 19, 2010.

\bibitem{lumsdainewarren15}
P.L. Lumsdaine and M.A. Warren.
\newblock The local universes model: an overlooked coherence construction for
  dependent type theories.
\newblock {\em ACM Trans. Comput. Log.}, 16(3):Art. 23, 31, 2015.

\bibitem{martinlof84}
P.~Martin-L{\"o}f.
\newblock {\em Intuitionistic type theory}, volume~1 of {\em Studies in Proof
  Theory. Lecture Notes}.
\newblock Bibliopolis, Naples, 1984.

\bibitem{nordstrometal90}
B.~Nordstr{\"o}m, K.~Petersson, and J.M. Smith.
\newblock {\em Programming in {M}artin-{L}\"of's type theory -- An
  introduction}, volume~7 of {\em International Series of Monographs on
  Computer Science}.
\newblock Oxford University Press, New York, 1990.

\bibitem{pitts00}
A.M. Pitts.
\newblock Categorical logic.
\newblock In {\em Handbook of logic in computer science, {V}ol. 5}, volume~5 of
  {\em Handb. Log. Comput. Sci.}, pages 39--128. Oxford Univ. Press, New York,
  2000.

\bibitem{UFP13}
The Univalent~Foundations Program.
\newblock {\em Homotopy type theory---univalent foundations of mathematics}.
\newblock The Univalent Foundations Program, Princeton, NJ; Institute for
  Advanced Study (IAS), Princeton, NJ, 2013.

\bibitem{quillen67}
D.G. Quillen.
\newblock {\em Homotopical algebra}.
\newblock Lecture Notes in Mathematics, No. 43. Springer-Verlag, Berlin, 1967.

\bibitem{voevodsky11}
V.~Voevodsky.
\newblock Notes on type systems.
\newblock Available from the author's webpage, 2011.

\end{thebibliography}

\end{document}